\newtheorem{defn}{Definition}[section]
\newtheorem{cro}[defn]{Corollary}
\newtheorem{cons}[defn]{Construction}
\newtheorem{prop}[defn]{Proposition}
\newtheorem{thm}[defn]{Theorem}
\newtheorem{lem}[defn]{Lemma}
\newtheorem{rem}[defn]{\bf Remark}
\newtheorem{exmp}[defn]{Example}
\numberwithin{equation}{section}
\def\mr#1{{{\mathrm{#1}}}\setcounter{equation}{0}}
\def\mc#1{{{\mathcal{#1}}}\setcounter{equation}{0}}
\newcommand{\p}{\ensuremath{\mathbf{p}}}
\newcommand{\q}{\ensuremath{\mathbf{q}}}
\newcommand{\tube}{\ensuremath{\mathbf{t}}}
\begin{document}

\title{Minimal silting modules {and ring extensions}
\footnotetext{ Version of \today}
\footnotetext{ 2020 Mathematics Subject Classification: 16E60 16G10  16S85 13B02}
\footnotetext {$^{\ast}$ Corresponding author.}
\footnotetext{ E-mail addresses: lidia.angeleri@univr.it(Lidia Angeleri H\"{u}gel), caoweiqing18@163.com(Weiqing Cao)}}

\author{ Lidia Angeleri H\"{u}gel$^{a}$,  Weiqing Cao$^{b\ast}$\\
\small $^{a}$Universit\`a degli Studi di Verona, Strada le Grazie 15 - Ca' Vignal, I-37134 Verona, Italy\\
  \small    $^{b}$School of Mathematical Sciences, Nanjing Normal University,
    Nanjing 210023, Jiangsu, P.R.China\\
}

\date{}
\maketitle
{\small {{\bf Abstract.}}
{\small Ring epimorphisms often induce silting modules and cosilting modules, termed minimal silting or minimal cosilting. The aim of this paper is twofold. Firstly, we determine the minimal tilting and minimal cotilting modules over  a tame hereditary algebra. In particular, we show that a large cotilting module is minimal if and only if it has an adic module as a direct summand. Secondly,  we discuss the behaviour of minimality under ring extensions. We show that minimal cosilting modules over a commutative noetherian ring extend to minimal cosilting modules along any flat ring epimorphism. Similar results are obtained for  commutative rings of small homological dimension.}


\smallskip

{\small {\bf Keywords. } Minimal silting modules. Ring epimorphisms. Ring extensions. Minimal cosilting modules. Tame hereditary algebras.






{\section{Introduction}}

Tilting theory and its recent development into silting theory are known to be closely related to localization of rings. For example, every Ore localization $R\hookrightarrow \Sigma^{-1}R$ of a ring $R$ with the property that $\Sigma^{-1}R$ has projective dimension at most one over $R$ gives rise to a tilting module
$\Sigma^{-1}R\oplus \Sigma^{-1}R/R$.
Of course, such tilting module will often be large, that is, it won't be finitely presented,  not even up to equivalence.

More generally, ring epimorphisms with nice homological properties give rise to silting modules. Such modules were introduced in \cite{AMV1} as  large analogues of the support $\tau$-tilting modules studied in representation theory and cluster theory. They can be characterized as zero cohomologies of (not necessarily compact) two-term silting complexes.

Building on these connections, it was shown in \cite{AMV2} that the universal localizations of a hereditary ring are parametrized  by certain silting modules which are determined by a minimality condition and are called {\it minimal silting}. A dual version of this result was recently established in \cite{AH}, leading to the notion of a {\it minimal cosilting}  module. {The interest in minimal silting or cosilting modules goes well beyond the hereditary case. For example,  minimal cosilting modules also parametrize the flat ring epimorphisms starting in a commutative noetherian ring}. We refer to Section 2 for details.

In the present paper, we continue these investigations by analyzing two aspects.
The first aspect concerns an important and widely studied class of hereditary rings:  finite dimensional tame hereditary algebras. The large cotilting modules over such algebras were classified in \cite{Buan}. They are determined by their indecomposable summands, which can be either finite dimensional regular modules, or infinite dimensional pure-injective, thus Pr\"ufer modules, adic modules, or the generic module.  A classification of the large tilting modules was established in \cite{AS2}. Both tilting and cotilting modules are parametrized by pairs $(Y,P)$ where $Y$ is a branch module, that is, a finite dimensional regular module with certain combinatorial features, and $P$ is a subset of the projective line, when the ground field is algebraically closed, or more generally, a subset of the index set $\mathbb X$ of the tubular family $\tube=\bigcup_{\lambda\in\mathbb{X}}\tube_\lambda$ in the Auslander-Reiten quiver.

In Section 3, we determine the  minimal tilting and minimal cotilting modules over  a tame hereditary algebra. Since the finite dimensional (co)tilting modules are all minimal, we focus on the large ones.  We prove that under the parametrization described above, minimal tilting or minimal cotilting modules correspond to the pairs $(Y,P)$ where $P$ is not empty. This result (Theorem~\ref{main}) is achieved by an explicit construction of the universal localization corresponding to the tilting module $T_{(Y,P)}$ 
when $P\neq\emptyset$. More precisely, we   construct the wide subcategory $\mc{M}$ of the category of finite dimensional modules that allows to realize $T_{(Y,P)}$ as the tilting module $R_{\mc{M}}\oplus R_{\mc{M}}/R$ arising from the universal localization  $R\to R_{\mc{M}}$  of $R$ at  $\mc{M}$.
We also obtain  that a large cotilting module is minimal if and only if it has an adic direct summand (Theorem~\ref{main2}).

The second aspect we want to address is the behaviour of silting and cosilting modules under ring extensions. A criterion recently established in \cite{BREAZ} ensures that every silting module $T$ over a commutative ring $R$ extends to a silting module $T\otimes_RS$  along any ring epimorphism $R\to S$. In Section 4, we give conditions under which  minimality is preserved. In particular, we show that all minimal cosilting modules over a commutative noetherian ring extend to minimal cosilting modules along any flat ring epimorphism (Corollary~\ref{cor1}). Over a commutative hereditary ring, we see that every cosilting module  extends to a minimal cosilting module along any ring epimorphism (Corollary~\ref{cor2}).

The paper is organized as follows.
 Section 2 contains some preliminaries on ring epimorphisms and a survey on their relation with silting and cosilting modules.
In Section 3, we determine the minimal tilting and minimal cotilting modules over a tame hereditary algebra. Section 4 is devoted to extensions of minimal silting (or cosilting) modules along ring epimorphisms. We first study the example of the Kronecker algebra (Example~\ref{fail}).
Then we turn to commutative rings and provide  some useful criteria for preserving minimality. We close the paper with applications to commutative noetherian rings and commutative rings of small homological dimension.

\section{Preliminaries}
\subsection{Notation}
Throughout the paper, $R$ will denote a ring, Mod$R$ ($R\,$Mod) the category of all right (left) $R$-modules and  mod$R$ ($R\,$mod) the category of finitely presented right (left) $R$-modules.

We fix a   commutative ring $k$  such that  $R$ is a $k$-algebra, together with   an injective cogenerator $W$ in $\mr{Mod} \,k$,  and we denote by   $(-)^+ = \mr{Hom}_{k}({-},{W})$  the duality functors between $\mr{Mod}{R}$ and ${R}\mr{Mod}$. For example, one can choose $k=\mathbb Z$ and  $W= \mathbb{Q}/\mathbb{Z}$. In case $R$ is a finite dimensional algebra over a field $k$, we will take the usual vector space duality $(-)^+= D= \mr{Hom}_{k}({-},{k})$.

 Let $\mc{C}\subset$ Mod$R$ be a class of modules. Denote
by Add$\mc{C}$ (respectively, add$\mc{C}$) the class consisting of all modules isomorphic to direct summands of (finite) direct sums of elements of M. The class consisting of all modules isomorphic
to direct summands of products of modules of $\mc{C}$ is denoted by Prod$\mc{C}$. The class consisting
of the right $R$-modules which are epimorphic images of arbitrary direct sums of elements in $\mc{C}$
is denoted by Gen$\mc{C}$. Dually, we define Cogen$\mc{C}$ as the class of all submodules of arbitrary
direct products of elements in $\mc{C}$. Moreover, we write
\begin{verse}
 ${\mc{C}^{\perp}}\ =\{N_{R}\ |$
  $\mr{Ext}^{1}_{R}(M,N)=0=\mr{Hom}_{R}(M,N)$ for each
$M\in\mc{C}\}$.

 ${\mc{C}^{\perp_{1}}}\ =\{N_{R}\ |$
  $\mr{Ext}_{R}^{1}(M,N)=0$ for each
$M\in\mc{C}\}$.

 ${\mc{C}^{\perp_0}}\ =\{N_{R}\ |$
  $\mr{Hom}_{R}(M,N)=0$ for each
$M\in\mc{C}\}$.
\end{verse}
and define  $^{\perp}\mc{C}$, $^{\perp_{1}}\mc{C}$, $^{\perp_0}\mc{C}$ dually. If $\mc{C}$ contains a unique module $M$, then we shall denote these subcategories by $M^{\perp}$, $M^{\perp_{1}}$, and $M^{\perp_0}$ etc.

\subsection{Ring epimorphisms}


\begin{defn}{\rm
A ring homomorphism $\lambda: R\rightarrow S$ is a {\it ring epimorphism}  if it is an epimorphism in the category of rings with unit, or equivalently, if the functor given by restriction of scalars \mbox{$\lambda_{\ast}: \mathrm{Mod}S\rightarrow \mathrm{Mod}R$} is a full embedding.

\smallskip

 A ring epimorphism $\lambda: R\rightarrow S$ is said to be
\begin{itemize}
\item {\it homological}  if $\mathrm{Tor}^{R}_{i}(S,S)=0$ for $i> 0$, or equivalently, the functor given by restriction of scalars $\lambda_{\ast}:D(\mr{Mod}S)\rightarrow D(\mr{Mod}R) $ induces a full embedding of the corresponding derived categories;
\item   {\it (right) flat}  if $S$ is a flat right $R$-module;
\item  {\it pseudoflat}  if $\mathrm{Tor}^{R}_{1}(S,S)=0$.
\end{itemize}

Two ring epimorphisms $\lambda:  R\rightarrow S$ and $\lambda^{'}:  R\rightarrow S^{'}$ are {\it equivalent} if there is a ring isomorphism $h: S\rightarrow S^{'}$ such that $\lambda^{'}=h\cdot \lambda$. We say that $\lambda$ and $\lambda'$ lie in the same {\it epiclass} of $R$.
}
\end{defn}

Ring epimorphisms are closely related to certain subcategories of Mod$R$.


\begin{defn}{\rm
A full subcategory $\mathcal{X}$ of Mod$R$ is called {\it bireflective} if the inclusion functor $\mathcal{X}\rightarrow \mathrm{Mod}R$ admits both a left and right adjoint, or equivalently,  $\mc{X}$  is closed under products, coproducts, kernels and cokernels.
}
\end{defn}


\begin{thm}\label{22}{\rm \cite{Gabriel,BD}}
$(1)$ The map assigning to a ring epimorphism  $\lambda:R\rightarrow S$ the essential image $\mc{X}$ of the functor $\lambda_\ast$ defines a bijection between

~~~~~~~$(i)$ epiclasses of ring epimorphisms $R\rightarrow S$;

~~~~~~$(ii)$ bireflective subcategories $\mc{X}$ of $\mathrm{Mod}R$.

$(2)$ The following statements are equivalent for a ring epimorphism $\lambda: R \rightarrow S$.

~~~~~~$(1)$ $\lambda$ is a pseudoflat ring epimorphism;

~~~~~~$(2)$ $\mathcal{X}$ is closed under extensions in $\mathrm{Mod}R$;

~~~~~~$(3)$ the functors $\mathrm{Ext}^{1}_{R}$ and $\mathrm{Ext}^{1}_{S}$ agree on $S$-modules;

~~~~~~$(4)$ the functors $\mathrm{Tor}_{1}^{R}$ and $\mathrm{Tor}_{1}^{S}$ agree on $S$-modules.
\end{thm}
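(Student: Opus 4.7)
My plan is to handle Parts (1) and (2) separately: Part (1) by exhibiting mutually inverse assignments, Part (2) by running a cycle of implications among the four conditions.

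\textbf{Part (1).} Given a ring epimorphism $\lambda:R\to S$, I let $\mathcal{X}_\lambda$ be the essential image of $\lambda_\ast$. Since $\lambda_\ast$ is a full embedding, the inclusion $\mathcal{X}_\lambda\hookrightarrow\mathrm{Mod}R$ has left adjoint $-\otimes_R S$ and right adjoint $\mathrm{Hom}_R(S,-)$, both factoring through $\mathrm{Mod}S$; this makes $\mathcal{X}_\lambda$ bireflective, and in particular closed under products, coproducts, kernels and cokernels. Conversely, given a bireflective $\mathcal{X}$ with left reflector $\ell$, I set $S:=\ell(R)$ and transport the multiplication of $R$ along the unit $\eta:R\to S$. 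To see $\eta$ is a ring epimorphism I would verify that $S\otimes_R S\to S$ is an isomorphism, which follows from $S\in\mathcal{X}$ and the identification of $-\otimes_R S$ with the left reflector. A direct check shows that the two constructions are mutually inverse at the level of epiclasses.

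\textbf{Part (2), (1) $\Leftrightarrow$ (2).} Assume pseudoflatness. I first prove the key lemma that $\mathrm{Tor}^R_1(N,S)=0$ for every $S$-module $N$: take an $S$-presentation $0\to K\to S^{(I)}\to N\to 0$, apply the long exact Tor sequence over $R$, and use $\mathrm{Tor}^R_1(S^{(I)},S)=0$ together with the fact that for any $S$-module $M$ the natural map $M\otimes_R S\to M$ is an isomorphism (since $\lambda$ is a ring epimorphism). Applied to an $R$-extension $0\to M\to E\to N\to 0$ with $M,N\in\mathcal{X}$, the functor $-\otimes_R S$ gives a short exact sequence, and the five lemma forces $E\cong E\otimes_R S\in\mathcal{X}$. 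For the converse direction, the duality $\mathrm{Ext}^1_R(S,S^+)\cong\mathrm{Tor}^R_1(S,S)^+$ converts any nontrivial class in $\mathrm{Tor}^R_1(S,S)$ into a non-split $R$-extension $0\to S^+\to E\to S\to 0$ of $S$-modules; extension closure would place $E$ in $\mathcal{X}$, so the sequence would be an $S$-module extension, which must split since $S_S$ is projective -- a contradiction.

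\textbf{Remaining equivalences.} The equivalence (3) $\Leftrightarrow$ (4) follows from the duality $\mathrm{Ext}^1_R(M,N^+)\cong\mathrm{Tor}^R_1(M,N)^+$ applied both over $R$ and over $S$, using that the dual of an $S$-module is an $S$-module. For (1) $\Rightarrow$ (4), iterating the lemma above gives $\mathrm{Tor}^R_i(M,S)=0$ for all $i\ge 1$ and all $S$-modules $M$; the change-of-rings spectral sequence $\mathrm{Tor}^S_p(\mathrm{Tor}^R_q(M,S),N)\Rightarrow\mathrm{Tor}^R_{p+q}(M,N)$ then collapses, yielding agreement of $\mathrm{Tor}^R_\ast(M,N)$ and $\mathrm{Tor}^S_\ast(M,N)$ on $S$-modules. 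Conversely, (4) specialized to $M=N=S$ immediately gives $\mathrm{Tor}^R_1(S,S)=\mathrm{Tor}^S_1(S,S)=0$, which is (1). The main obstacle I anticipate is Part (1)'s construction of the ring structure on $\ell(R)$ and the verification that the unit is a ring epimorphism; once this categorical package is secured, Part (2) is a matter of careful Tor/Ext bookkeeping resting on the key lemma $\mathrm{Tor}^R_1(N,S)=0$ for $S$-modules $N$.
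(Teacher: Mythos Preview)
The paper does not prove this theorem; it is quoted from \cite{Gabriel,BD} as a preliminary fact, so there is no in-paper argument to compare against. Your outline is the standard one and is mostly sound, but the step $(1)\Rightarrow(4)$ contains a genuine error.

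You claim that ``iterating the lemma above gives $\mathrm{Tor}^R_i(M,S)=0$ for all $i\ge 1$ and all $S$-modules $M$'', and then let the change-of-rings spectral sequence collapse. This iteration fails: from an $S$-presentation $0\to K\to S^{(I)}\to M\to 0$ you only get that $\mathrm{Tor}^R_2(M,S)$ is a quotient of $\mathrm{Tor}^R_2(S,S)^{(I)}$, and nothing in pseudoflatness forces $\mathrm{Tor}^R_2(S,S)$ to vanish. Indeed, your claim would make every pseudoflat epimorphism homological, erasing the very distinction the paper uses. The repair is easy, since statement~(4) only concerns degree~$1$: either read off the five-term exact sequence of your spectral sequence (the single vanishing $E^2_{0,1}=\mathrm{Tor}^R_1(M,S)\otimes_S N=0$ already yields $\mathrm{Tor}^R_1(M,N)\cong\mathrm{Tor}^S_1(M,N)$), or argue directly by taking an $S$-free presentation $0\to K\to F\to N\to 0$, applying $M\otimes_R-$ and $M\otimes_S-$, and using $\mathrm{Tor}^R_1(M,F)=0$ together with $M\otimes_R X\cong M\otimes_S X$ for $S$-modules $X$. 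A minor further point: your duality proof of $(3)\Leftrightarrow(4)$ only reaches $\mathrm{Ext}$-groups with second argument of the form $N^+$; it is cleaner to link $(3)$ directly to $(2)$, since extension-closure of $\mathcal{X}$ says exactly that every $R$-extension of $S$-modules is an $S$-extension.
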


Classical localization of commutative rings at multiplicative sets provides an important class of examples for flat ring epimorphisms. More generally, the notion of universal localization which we recall below
yields a large supply of pseudoflat ring epimorphisms. If $R$ is a hereditary ring, then
 $\lambda: R \rightarrow S$ is a homological ring epimorphism if and only if it is pseudoflat, which is equivalent   to being a universal localization of $R$ by {\cite [Theorem ~6.1] {KRAUSE}}.

\begin{thm}{\rm\cite [Theorem 4.1] {Scho}}
Let $R$ be a ring and $\Sigma$ be a class of morphisms between finitely generated projective right $R$-modules. Then there is {a} pseudoflat ring epimorphism $\lambda: R \rightarrow R_{\Sigma}$, called  {\rm universal localization} of $R$ at $\Sigma$, such that

$(1)$ $\lambda$ is $\Sigma$-inverting: if $\sigma$ belongs to $\Sigma$, then $\sigma\otimes_{R}R_{\Sigma}$ is an isomorphism of right $R_{\Sigma}$-modules, and

$(2)$ $\lambda$ is universal $\Sigma$-inverting: for any $\Sigma$-inverting morphism $\lambda^{'}: R\rightarrow S$ there exists a unique ring homomorphism $g: R_{\Sigma}\rightarrow S$ such that $g\circ \lambda=\lambda^{'}$.
\end{thm}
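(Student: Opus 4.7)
The plan is to construct $R_\Sigma$ explicitly by generators and relations over $R$, to verify its universal property (which automatically yields the ring-epimorphism property), and finally to establish pseudoflatness. The first two steps are largely formal; the bulk of the real work sits in pseudoflatness.

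For the construction, given $\sigma : P \to Q$ in $\Sigma$, I would fix complementary projectives and isomorphisms $P \oplus P' \cong R^m$, $Q \oplus Q' \cong R^n$, so that $\sigma \oplus 0$ is represented by an explicit matrix $A_\sigma \in M_{n\times m}(R)$. Then form the free $R$-ring generated by an $m \times n$ matrix of formal symbols $X_\sigma = (x^\sigma_{ij})$ for each $\sigma \in \Sigma$, modulo the relations encoding $A_\sigma X_\sigma = e_Q$ and $X_\sigma A_\sigma = e_P$, where $e_P \in M_m(R)$ and $e_Q \in M_n(R)$ are the idempotents projecting onto the summands $P$ and $Q$. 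Call the resulting $R$-ring $R_\Sigma$ and let $\lambda$ be its structure map. By construction $\sigma \otimes_R R_\Sigma$ is invertible with inverse induced by $X_\sigma$, so $\lambda$ is $\Sigma$-inverting.

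For the universal property, if $\mu : R \to S$ is any $\Sigma$-inverting morphism, then $\mu(\sigma)$ has a unique inverse on the summand $Q \otimes_R S$, whose matrix entries in $S$ determine a well-defined map $g : R_\Sigma \to S$ with $g \circ \lambda = \mu$; uniqueness is forced because $R_\Sigma$ is generated as a ring over $\lambda(R)$ by the elements $x^\sigma_{ij}$. Specialising this to $S = R_\Sigma$, any two ring maps out of $R_\Sigma$ that agree on $R$ must coincide, so $\lambda$ is a ring epimorphism.

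The main obstacle is pseudoflatness, i.e.\ $\mathrm{Tor}^R_1(R_\Sigma, R_\Sigma) = 0$, and this is the content-bearing step. The strategy is to combine the isomorphisms $P \otimes_R R_\Sigma \xrightarrow{\sigma \otimes 1} Q \otimes_R R_\Sigma$ supplied by $\Sigma$-inversion with a Cohn-style normal form for $R_\Sigma$: every element of $R_\Sigma$ is a finite sum of products built from images of elements of $R$ and entries of matrices $X_\sigma$, and this presentation organises $R_\Sigma$ as a colimit over finite subfamilies $\Sigma' \subseteq \Sigma$ of rings obtained by successive base-change along the complexes $P \otimes_R (-) \xrightarrow{\sigma \otimes 1} Q \otimes_R (-)$. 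Tensoring these complexes with $R_\Sigma$ makes them split exact because the inverses $X_\sigma$ have been adjoined, so the first syzygies of $R_\Sigma$ over $R$ vanish after $-\otimes_R R_\Sigma$. Passing to the colimit yields $\mathrm{Tor}^R_1(R_\Sigma, R_\Sigma) = 0$; by Theorem~\ref{22}(2) this is equivalent to the essential image of $\lambda_\ast$ being closed under extensions in $\mathrm{Mod}\,R$, giving a conceptual cross-check on the construction.
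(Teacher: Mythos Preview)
The paper does not prove this theorem at all: it is quoted verbatim as \cite[Theorem 4.1]{Scho} and used as a black box, so there is no ``paper's own proof'' to compare against. Your first two steps (the generators-and-relations construction of $R_\Sigma$ and the verification of the universal property, hence the epimorphism property) are the standard ones and are correct.

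The pseudoflatness step, however, is where your sketch becomes genuinely gappy. Saying that ``the first syzygies of $R_\Sigma$ over $R$ vanish after $-\otimes_R R_\Sigma$'' presupposes that you have produced an explicit $R$-module (or $R$-bimodule) presentation of $R_\Sigma$ whose kernel you can identify and then kill by tensoring with $R_\Sigma$. The two-term complexes $P\xrightarrow{\sigma}Q$ are the maps being inverted, not a resolution of $R_\Sigma$, and organising $R_\Sigma$ as a filtered colimit over finite $\Sigma'\subseteq\Sigma$ only reduces to the case $\Sigma$ finite; it does not by itself supply the resolution. What is actually needed (and what Schofield and Bergman--Dicks do) is to exhibit a concrete short exact sequence of $R$-bimodules presenting the augmentation ideal $\ker(R_\Sigma\otimes_R R_\Sigma\to R_\Sigma)$, built from copies of $R_\Sigma\otimes_R Q^\vee\otimes_R P\otimes_R R_\Sigma$ and the adjoined inverses, and then check directly that this sequence splits after a further $-\otimes_R R_\Sigma$. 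Your colimit/normal-form remarks are in the right spirit but do not amount to this computation; as written, the argument does not establish $\mathrm{Tor}^R_1(R_\Sigma,R_\Sigma)=0$.
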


Given a bireflective subcategory {$\mc{X}$} of Mod$R$ and an $R$-module $M$, we will denote by \mbox{$\psi_{M}: M\rightarrow X_{M}$} the unit of the
adjunction given by the left adjoint of the inclusion functor. The map $\psi_{M}$ is an $\mc{X}$-{\it reflection}, i.e.~Hom$_{R}(\psi_{M}, X)$ is an isomorphism
for all $X$ in $\mc{X}$. In particular, $\psi_{M}$   is a left $\mc{X}$-approximation which is {\it left minimal},
i.e.~any endomorphism $\theta$ of ${X}_{M}$ with $\theta\circ \psi_{M}=\psi_{M}$ is an isomorphism.

\subsection{Silting theory}




 Given  a    morphism $\sigma: P\to Q$ between  projective modules, we define the subcategory
$$\mathcal{D}_{\sigma}=\{X\in \text{Mod}R|~ \text{Hom}_{R}(\sigma,X) ~\text{is~surjective}\}. $$

\begin{defn}{\rm \cite{AMV1,AMV4}
We say that  an ${R}$-module $T$
\begin{itemize}
\item admits a {\it presilting presentation} if there is a projective presentation  $P\stackrel{\sigma}\rightarrow Q\rightarrow T\rightarrow 0$ such that Hom$_{D(\mr{Mod}R)}(\sigma,\sigma^{(I)}[1])=0$ for all sets $I$ or, equivalently,  $\mathrm{Gen}{T}\subseteq D_{\sigma}$;
\item is a {\it silting} module  if it admits a projective presentation $P\stackrel{\sigma}\rightarrow Q\rightarrow T\rightarrow 0$ with $ \mathrm{Gen}T=\mathcal{D}_{\sigma}$, in
which case we say that $T$ is silting {\it with respect to} $\sigma$;
\item  is a {\it tilting} module if it is silting with respect to an injective map $\sigma$, or equivalently, $\mr{Gen}T=T^{\perp_{1}}$.
\\ This amounts to  the following conditions:

$(T1)$ proj.dim(T)$\leq 1$,

$(T2)$ $\mr{Ext^{1}_{R}}(T,T^{(\kappa)})=0$ for any cardinal $\kappa${,}

$(T3)$ there is an exact sequence $ 0\rightarrow R\rightarrow T_{0}\rightarrow T_{1}\rightarrow 0$ with $T_{0}$, $T_{1}\in \mr{Add}T$.
\end{itemize}
}
\end{defn}
\noindent
Note that every silting module $T$ satisfies Add$T=\mr{Gen}T\cap{^{\perp_{1}}(\mr{Gen}T)}$. Moreover, $T$ gives rise to a torsion pair with torsion class $\mr{Gen}T$ and torsion-free class $T^{\perp_0}$. The class $\mr{Gen}T$ is called a {\it silting class}, or a  {\it tilting class}  in case $T$ is a tilting module.  Silting modules having the same silting class are said to be {\it equivalent}. We say that a  silting module is {\it large} if it not equivalent to a finitely presented silting module.
{\it Cosilting} or {\it cotilting} modules and  classes are  defined dually, and equivalence of  cosilting or cotilting modules is defined correspondingly.

\smallskip

If $T$ is a silting module with respect to $\sigma$, then by \cite{WEI} there is a triangle
$$ R\stackrel{\phi}\rightarrow \sigma_{0} \rightarrow \sigma_{1} \rightarrow R[1]~~~~~~~~~~~~~~~~~~~~~~~~~~~~(1.1)$$
in the derived category $D(\mr{Mod}R)$, where $\sigma_{0}$ and $\sigma_{1}$ lie in $\text{Add}\sigma$ and $\phi$ is a left $\text{Add}\sigma$-approximation of $R$. Applying the cohomology functor $\text{H}^{0}(-)$ to this triangle, we obtain an exact sequence
$$R\stackrel{f}\rightarrow T_{0} \rightarrow T_{1} \rightarrow0 ~~~~~~~~~~~~~~~~~~~~~~~~~~~~~~(1.2)$$
in Mod$R$, where $T_{0}, T_{1}\in \text{Add}{T}$ and $f$ is a left $\text{Add}{T}$-approximation of $R$.

\begin{defn}{\rm \cite{AMV4}
Let $T$ be a silting module with respect to $\sigma$. If the map $\phi$ in the triangle $(1.1)$ can be chosen left minimal, then $T$ is said to be a {\it minimal silting module}.}
\end
{defn}

For example, all  finite dimensional silting (that is, support $\tau$-tilting) modules over a finite dimensional algebra are minimal, cf.~\cite[Remark 1.6]{AMV4}.
Minimal silting modules are closely related with pseudoflat ring epimorphisms. Indeed, there is a map assigning a pseudoflat ring epimorphism to every minimal silting module  \cite[Corollary 2.4]{AMV4}. Conversely, every ring epimorphism $\lambda:R\to S$ for which the right $R$-module $S_R$ admits a presilting presentation induces a silting  $R$-module of the form $T=S\oplus \mr{Coker}\lambda$, see \cite[Proposition 1.3]{AMV4}.

\begin{defn}{\rm  We say that a silting module $T$ {\it arises from a ring epimorphism} if there is a ring epimorphism $\lambda:R\to S$ such {that} $S\oplus \mr{Coker}\lambda$ is a silting $R$-module equivalent to $T$.}
\end{defn}

\begin{prop}\label{bijpresilting}
{ The map assigning to a ring epimorphism $\lambda:R\to S$ the right $R$-module $S\oplus\mr{Coker} \lambda$  yields  an injection from (i) to (ii), where
\begin{enumerate}
\item[(i)] epiclasses of ring epimorphisms $\lambda:R\to S$ such that $S_R$ admits a presilting presentation,
\item[(ii)] equivalence classes of silting right $R$-modules arising from ring epimorphisms.
\end{enumerate}
If $R$ is  right perfect, then this map is a bijection, and all modules in (ii) are minimal silting modules.}
\end{prop}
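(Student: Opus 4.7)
My plan is to treat well-definedness and injectivity in full generality, and to use the right-perfect hypothesis only for surjectivity and for the minimality conclusion.

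Well-definedness of the map into (ii) is already provided by \cite[Proposition~1.3]{AMV4}, recalled in the excerpt: for $\lambda\in$~(i) the module $T:=S\oplus\mr{Coker}\lambda$ is silting and arises from $\lambda$ by construction. For injectivity, suppose $\lambda:R\to S$ and $\lambda':R\to S'$ both lie in (i) and produce equivalent silting modules $T\sim T'$, equivalently $\mathrm{Add}T=\mathrm{Add}T'$. The idea is to reduce to Theorem~\ref{22}(1) by showing that the associated bireflective subcategories $\mc{X}_\lambda, \mc{X}_{\lambda'}\subseteq\mr{Mod}R$ (the essential images of $\mr{Mod}S$ and $\mr{Mod}S'$) coincide. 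Since $\mc{X}_\lambda$ contains $T$ and is closed under summands and coproducts, it contains all of $\mathrm{Add}T=\mathrm{Add}T'$; in particular $S'\in\mc{X}_\lambda$, and symmetrically $S\in\mc{X}_{\lambda'}$.

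The main obstacle will be the auxiliary lemma: for any ring epimorphism $\mu:R\to U$, the subcategory $\mc{X}_\mu$ is the smallest bireflective subcategory of $\mr{Mod}R$ containing $U$. One inclusion is immediate. For the other, given any bireflective $\mc{Y}$ with $U\in\mc{Y}$, closure of $\mc{Y}$ under arbitrary (co)products places every free $U$-module in $\mc{Y}$; and using the ring-epimorphism property (which implies, via $U\otimes_R U\cong U$, that every $R$-linear map between $U$-modules is automatically $U$-linear), every $U$-module arises as an $R$-cokernel of such a map between free $U$-modules, so closure of $\mc{Y}$ under cokernels places it in $\mc{Y}$. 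Applied to $\lambda$ and $\lambda'$, this yields $\mc{X}_\lambda\subseteq\mc{X}_{\lambda'}$ and symmetrically the reverse inclusion; Theorem~\ref{22}(1) then concludes $\lambda\sim\lambda'$.

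For the perfect case I will exploit that, over a right perfect ring, projective covers exist and idempotents lift. For surjectivity, given $T\in$~(ii) with $T\sim S\oplus\mr{Coker}\lambda$, I would lift the idempotent $T\twoheadrightarrow S\hookrightarrow T$ to an idempotent on a silting presentation $\sigma$ of $T$ in order to obtain a direct sum decomposition $\sigma=\sigma_S\oplus\sigma_C$; then $\mr{Gen}S\subseteq\mr{Gen}T=\mathcal{D}_\sigma\subseteq\mathcal{D}_{\sigma_S}$, so $\sigma_S$ is a presilting presentation of $S_R$ and $\lambda$ lies in~(i). For minimality, the same existence of projective covers (lifted to complexes of projectives) allows one to strip redundant summands off the approximation $\phi:R\to\sigma_0$ in the defining triangle (1.1) of $T$, making it left minimal and hence $T$ a minimal silting module in the sense of the definition preceding the proposition.
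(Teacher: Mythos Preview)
Your injectivity argument contains a genuine error: you assert that $\mc{X}_\lambda$ contains $T=S\oplus\mr{Coker}\lambda$, but this is false whenever $\lambda$ is not surjective. Indeed, tensoring the exact sequence $R\xrightarrow{\lambda} S\to\mr{Coker}\lambda\to 0$ with $S$ over $R$ and using $S\otimes_R S\cong S$ shows $\mr{Coker}\lambda\otimes_R S=0$; hence the $\mc{X}_\lambda$-reflection of $\mr{Coker}\lambda$ is zero, and $\mr{Coker}\lambda\in\mc{X}_\lambda$ only if $\mr{Coker}\lambda=0$. (Concretely: for $R=\mathbb Z$, $S=\mathbb Q$, the module $\mathbb Q/\mathbb Z$ is not a $\mathbb Q$-vector space.) Consequently the inclusion $\mr{Add}T\subseteq\mc{X}_\lambda$ fails, and you cannot conclude $S'\in\mc{X}_\lambda$ this way.

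The paper recovers the epiclass of $\lambda$ from the silting class by a different and more direct mechanism: it observes that $\lambda:R\to S$, regarded as an $R$-homomorphism, is a \emph{minimal left $\mr{Add}T$-approximation} of $R$. The approximation property holds because any $R$-map $R\to T'$ with $T'\in\mr{Add}(S\oplus\mr{Coker}\lambda)$ is given by an element that can be hit via right multiplication $S\to T'$; minimality follows since $\lambda$ is an $\mc{X}_\lambda$-reflection, so any $\theta\in\mr{End}_R(S)$ with $\theta\lambda=\lambda$ equals $\mathrm{id}_S$. As minimal left approximations are unique up to isomorphism, the equivalence class of $T$ determines $\lambda$ and hence $\mc{X}_\lambda=\{X\mid\mr{Hom}_R(\lambda,X)\text{ is bijective}\}$. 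Your ``smallest bireflective subcategory'' lemma is correct and interesting, but it does not help until you know $S'\in\mc{X}_\lambda$, which the paper's approximation argument yields immediately (since $\lambda$ and $\lambda'$ are both minimal left $\mr{Add}T$-approximations of $R$, they are isomorphic as maps).

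Your treatment of the right-perfect case is essentially the paper's: both use minimal projective presentations to split off a presilting presentation of the summand $S_R$ from one of $T_R$, and both invoke existence of minimal approximations (the paper via \cite[Remark~1.6]{AMV4}) to conclude minimality.
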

\begin{proof}
If  a silting module $T=S\oplus \mr{Coker}\lambda$ arises from a ring epimorphism $\lambda:R\to S$, then the map $\lambda$, viewed as an $R$-module homomorphism, is a minimal left  $\mr{Add}T$-approximation of $R$ and is thus uniquely determined up to isomorphism. This shows that the equivalence class of $T$ determines the bireflective subcategory $\mc{X}= \{X\in \mr{Mod}R\,\mid\, \mr{Hom}_R(\lambda,X) \text{ is bijective}\}$ and therefore the epiclass of $\lambda$, proving  the injectivity of the assignment.

Now, if $R$ is right perfect, then every silting module $T=S\oplus \mr{Coker}\lambda$ in (ii) is minimal by \cite[Remark 1.6]{AMV4}, and its presilting presentation entails the existence of a presilting presentation for the direct summand $S_R$ (compare the minimal projective presentations of $T_R$ and $S_R$). Thus the assignment is also surjective.
\end{proof}



The bijection for right perfect rings in Proposition~\ref{bijpresilting} has  a dual version, which holds over arbitrary rings thanks to the existence of minimal  injective copresentations. Let us first introduce the necessary terminology.
If $C$ is a cosilting  left $R$-module, then by \cite{WZ} there is an  exact sequence
\begin{equation}\label{precover} 0 \rightarrow C_{1}\rightarrow C_{0}\stackrel{g}\rightarrow R^{+}\end{equation}
where $C_0,C_1$ are in $\mathrm{Prod}{C}$, and $g$ is a right $\mathrm{Prod}{C}$-approximation of $_RR^+$.
\begin{defn}{\rm\cite{AH}}
{\rm
(1)
A cosilting left $R$-module $C$ is a  {\it minimal cosilting module} if the exact sequence (\ref{precover})
can be chosen such that
the subcategory $\mathrm{Cogen}{C}\cap {^{\perp_{0}}C_{1}}$ is bireflective,
and $\mathrm{Hom}_{R}(C_{0},C_{1})=0$.

\smallskip

\noindent
(2) We say that a module $C$ admits a {\it precosilting copresentation} if there is an injective copresentation $0\rightarrow C\rightarrow Q_{0}\stackrel{\omega}\rightarrow Q_{1}$ such that Hom$_{D(A)}(\omega^{I},\omega[1])=0$ for all sets $I$.
}
\end{defn}

\begin{thm}\label{mincos}{\rm \cite[Theorem 4.17]{AH}}
 {The map assigning to a ring epimorphism $\lambda:R\to S$  the  left $R$-module $S^{+}\oplus \mr{Ker}\lambda^{+}$, yields a bijection between
 \begin{enumerate}
\item[(i)] epiclasses of  ring epimorphisms $\lambda:R\to S$ such that $_RS^+$ has a precosilting copresentation,
\item[(ii)] equivalence classes of minimal cosilting left $R$-modules.
\end{enumerate}}
\end{thm}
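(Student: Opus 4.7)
The plan is to mimic the argument of Proposition~\ref{bijpresilting} in dual form, taking advantage of the fact that minimal injective copresentations always exist so that no perfectness hypothesis is required on the cosilting side. I will verify (a) that the assignment is well defined, i.e.\ the right-hand side really is a minimal cosilting left $R$-module; (b) that it is injective on epiclasses; and (c) that every minimal cosilting left $R$-module is obtained in this way.

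For (a), fix a ring epimorphism $\lambda:R\to S$ such that ${}_RS^+$ admits a precosilting copresentation, and set $C_0:=S^+$, $C_1:=\ker\lambda^+$, and $C:=C_0\oplus C_1$. Applying the exact functor $(-)^+$ to $0\to \ker\lambda\to R\to S\to \mathrm{coker}\,\lambda\to 0$ produces the four-term exact sequence
\[
0\to C_1\to C_0\xrightarrow{\lambda^+} R^+\to (\ker\lambda)^+\to 0,
\]
whose leading truncation $0\to C_1\to C_0\xrightarrow{\lambda^+}R^+$ is my candidate for the defining sequence of a minimal cosilting module. The map $\lambda^+$ is a right $\mathrm{Prod}\,C$-approximation of ${}_RR^+$ by Hom--tensor adjunction combined with the ring-epimorphism identity $S\otimes_R S\cong S$; the subcategory $\mathrm{Cogen}\,C\cap {^{\perp_{0}}C_{1}}$ coincides with the essential image of $\lambda_\ast:S\,\mathrm{Mod}\to R\,\mathrm{Mod}$ and is therefore bireflective by Theorem~\ref{22}(1); and $\mathrm{Hom}_R(C_0,C_1)=0$ is exactly what the precosilting hypothesis on ${}_RS^+$ supplies after dualizing.

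For (b), two ring epimorphisms $\lambda,\lambda'$ that produce equivalent cosilting modules determine the same bireflective subcategory $\mathrm{Cogen}\,C\cap {^{\perp_{0}}C_{1}}$, and Theorem~\ref{22}(1) then forces them into a common epiclass. For (c), given a minimal cosilting module $C$ with defining sequence $0\to C_1\to C_0\xrightarrow{g}R^+$, the bireflective subcategory $\mathcal{X}:=\mathrm{Cogen}\,C\cap {^{\perp_{0}}C_{1}}$ corresponds by Theorem~\ref{22}(1) to an epiclass of a ring epimorphism $\lambda:R\to S$. Dualizing the $\mathcal{X}$-reflection $\psi_R:R\to S$ yields $\lambda^+:S^+\to R^+$, which is itself a right $\mathrm{Prod}\,C$-approximation of ${}_RR^+$. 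The Hom-vanishing $\mathrm{Hom}_R(C_0,C_1)=0$ then forces any two such approximations to be isomorphic, giving $C_0\cong S^+$ and $C_1\cong \ker\lambda^+$; along the way one checks that ${}_RS^+$ admits a precosilting copresentation, so $\lambda$ actually belongs to (i).

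The main obstacle is step (c), specifically matching the abstractly produced approximation $g:C_0\to R^+$ with the dualized reflection $\lambda^+$ as maps into $R^+$. This requires combining the minimality condition (through the vanishing $\mathrm{Hom}_R(C_0,C_1)=0$) with the explicit identifications $\mathrm{Hom}_R(S^+,R^+)\cong S^{++}$ and $S\otimes_R S\cong S$ to pin down the factorization up to isomorphism; once this is settled, the kernel of either map supplies the remaining summand $C_1\cong \ker\lambda^+$.
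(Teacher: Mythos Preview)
The paper does not prove this theorem: it is quoted verbatim from \cite[Theorem 4.17]{AH} and used as a black box throughout. There is therefore no proof in the paper to compare your proposal against.

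As for the proposal on its own terms, the overall architecture---dualizing Proposition~\ref{bijpresilting} and exploiting the existence of minimal injective copresentations to avoid a perfectness hypothesis---is the right idea and matches the philosophy of \cite{AH}. But several steps are only gestured at. In part (a) you have not shown that $C=S^+\oplus\ker\lambda^+$ is a cosilting module at all; the three conditions you list are the \emph{minimality} conditions from Definition~2.9(1), which presuppose that $C$ is already cosilting. You need the dual of \cite[Proposition~1.3]{AMV4} here, and that requires an argument. The assertion that $\mathrm{Hom}_R(C_0,C_1)=0$ ``is exactly what the precosilting hypothesis on ${}_RS^+$ supplies after dualizing'' is not transparent: the precosilting condition is a vanishing of $\mathrm{Hom}$ in the derived category between shifted copies of an injective copresentation, and translating that into $\mathrm{Hom}_R(S^+,(\mathrm{coker}\,\lambda)^+)=0$ needs to be spelled out. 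Finally, in part (c) you yourself flag the main obstacle---matching the abstract approximation $g$ with $\lambda^+$---but do not actually carry it out; in particular you do not verify that $g$ is right minimal, which is what would let you compare it to the canonical map $\lambda^+$. These are genuine gaps, not just omitted routine checks; filling them is essentially the content of the cited theorem in \cite{AH}.
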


Also minimal cosilting modules are intimately related with pseudoflat  ring epimorphisms.
\begin{rem}\label{pf} {\rm \cite[Example 4.15]{AH},\cite[Proposition 4.5]{AMSTV} The ring epimorphisms satisfying condition (i) above are all pseudoflat, and the converse holds true if $S_R$ has weak dimension at most one. If $R$ is commutative noetherian, then (i) consists precisely of the epiclasses of  flat ring epimorphisms.}\end{rem}

In the hereditary case we obtain the following result.

\begin{thm}\label{00}{\rm \cite[Theorem 5.8 and Corollary 5.17]{AMV2}, \cite[Corollary~4.22]{AH}}.
If $R$ is hereditary,  there are bijections between
\begin{enumerate}
\item[(i)] epiclasses of homological ring epimorphisms $R\to S$,
\item[(ii)] equivalence classes of minimal silting right $R$-modules,
\item[(iii)]  equivalence classes of minimal cosilting  left $R$-modules,
\item[(iv)] subcategories of $\mr{mod}R$ which are {wide}, i.e.~closed under kernels, cokernels, and extensions.
\end{enumerate}
The bijections $(i)\to (ii), (iii)$  map a homological ring epimorphism $\lambda:R\to S$ to the silting right $R$-module  $S\oplus\mr{Coker} \lambda$ and to the cosilting  left $R$-module $S^{+}\oplus \mr{Ker}\lambda^{+}$, and restrict to bijections between injective homological ring epimorphisms, tilting right modules and cotilting left modules.
The assignment $(ii)\to (iii)$  is given by  $T \mapsto T^+$. The bijection
  $(iv)\to (i)$ maps a wide subcategory $\mc{M}$ to the universal localization $R\rightarrow R_{\mc{M}}$ at  (projective resolutions of) the modules in $\mc{M}$.

 \end{thm}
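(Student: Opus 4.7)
The plan is to treat $(iv)\leftrightarrow(i)$ as the backbone and attach $(ii)$ and $(iii)$ via the presilting and precosilting criteria of Proposition~\ref{bijpresilting} and Theorem~\ref{mincos}. For $(iv)\leftrightarrow(i)$, I would invoke Krause's theorem quoted after Theorem~\ref{22}: over a hereditary ring, pseudoflat, homological, and universal-localization ring epimorphisms all coincide. The forward map sends a wide subcategory $\mathcal{M}\subseteq\mathrm{mod}R$ to the universal localization $R\to R_{\mathcal{M}}$ at short projective resolutions of the modules in $\mathcal{M}$ (which exist because $R$ is hereditary). The inverse assigns to $\lambda:R\to S$ the subcategory $\mathcal{M}_\lambda=\{M\in\mathrm{mod}R\mid M\otimes_R S=0=\mathrm{Tor}_1^R(M,S)\}$, which is wide thanks to coherence of $R$ and homologicality of $\lambda$.

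For the direction $(i)\to(ii)$: given homological $\lambda:R\to S$, the right $R$-module $S_R$ has projective dimension at most one, hence admits a two-term projective presentation $P\stackrel{\sigma}\to Q\to S\to 0$. By Theorem~\ref{22}(2), $\mathrm{Ext}^1_R(S,S^{(I)})\cong\mathrm{Ext}^1_S(S,S^{(I)})=0$ for all sets $I$, so $\sigma$ is presilting; Proposition~\ref{bijpresilting} then yields the silting module $T=S\oplus\mathrm{Coker}\,\lambda$. I would verify its minimality by observing that $\lambda$ itself, viewed as the map $R\to S\in\mathrm{Add}(T)$, serves as the left $\mathrm{Add}(T)$-approximation of $R$: since $\lambda$ is a ring epimorphism, $\mathrm{End}_R(S)\cong S$, and any $\theta\in\mathrm{End}_R(S)$ with $\theta\circ\lambda=\lambda$ necessarily equals $\mathrm{id}_S$. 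Moreover $\lambda$ is injective if and only if $T$ is tilting, giving the tilting restriction.

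The main obstacle is the surjectivity $(ii)\to(i)$: every minimal silting module must arise from a homological ring epimorphism. Starting from the minimal left $\mathrm{Add}(T)$-approximation sequence $R\stackrel{f}\to T_0\to T_1\to 0$ of (1.2), I would equip $T_0$ with a ring structure turning $f$ into a ring epimorphism. Following \cite[Theorem~5.8, Corollary~5.17]{AMV2}, one uses $\mathrm{Add}(T)=\mathrm{Gen}(T)\cap{}^{\perp_1}\mathrm{Gen}(T)$ together with minimality of $f$ to show that $\mathcal{X}=\{X\in\mathrm{Mod}R\mid\mathrm{Hom}_R(f,X)\text{ is bijective}\}$ is bireflective; Theorem~\ref{22}(1) then produces a ring epimorphism $\lambda:R\to S$ with $S\cong T_0$ and $\mathrm{Coker}\,\lambda\cong T_1$. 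Homologicality of $\lambda$ follows since $T$, hence $S$, has projective dimension at most one over the hereditary ring $R$.

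Finally, for $(i)\leftrightarrow(iii)$ I would invoke Theorem~\ref{mincos}: minimal cosilting left $R$-modules correspond bijectively to ring epimorphisms $\lambda$ for which ${}_RS^+$ admits a precosilting copresentation. Over a hereditary ring, ${}_RS^+$ has injective dimension at most one, and a dualization of the silting argument identifies the precosilting condition with homologicality of $\lambda$. Compatibility with the bijection $(i)\to(ii)$ follows from the natural isomorphism $(S\oplus\mathrm{Coker}\,\lambda)^+\cong S^+\oplus\mathrm{Ker}\,\lambda^+$, producing the assignment $T\mapsto T^+$ from $(ii)$ to $(iii)$; the cotilting restriction again follows from injectivity of $\lambda$.
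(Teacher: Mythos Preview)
The paper does not give its own proof of this theorem: it is stated as a citation of \cite[Theorem~5.8 and Corollary~5.17]{AMV2} and \cite[Corollary~4.22]{AH}, with no accompanying \texttt{proof} environment. So there is nothing in the paper to compare your proposal against.

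Your outline is a reasonable reconstruction of how those references assemble the bijections, and is essentially correct. One point to tighten: minimality of a silting module is defined at the derived level via the triangle~(1.1), not via the module-level sequence~(1.2). Your argument that $\lambda:R\to S$ is a left minimal $\mathrm{Add}(T)$-approximation only shows minimality of $f=\mathrm{H}^0(\phi)$. To upgrade this, you should observe (as the paper does later in Lemma~\ref{pdim}) that since $S_R$ has projective dimension at most one, the complex $\sigma_0$ has cohomology concentrated in degree~$0$; hence any $g\in\mathrm{End}_{D(\mathrm{Mod}R)}(\sigma_0)$ with $g\phi=\phi$ is an isomorphism because $\mathrm{H}^0(g)$ is. Also, in your treatment of $(ii)\to(i)$, the identification $S\cong T_0$ and $\mathrm{Coker}\,\lambda\cong T_1$ requires the additional verification that $T_0$ carries a ring structure making $f$ a ring homomorphism and that the resulting epimorphism is pseudoflat (hence homological over a hereditary ring); this is the substantive content of \cite[Corollary~5.17]{AMV2} and is not automatic from bireflectivity of $\mathcal{X}$ alone.
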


\section{Minimal tilting  modules over tame hereditary algebras}

In this section, let $R$ be a finite dimensional tame hereditary  algebra over a field $k$, which we assume to be indecomposable. We want to determine the  minimal tilting modules over $R$. Since every finite dimensional tilting module is obviously minimal, we will focus on the large tilting modules.

\subsection{Preliminaries on tame hereditary algebras}
It is well known that the finite dimensional indecomposable (right) $R$-modules are depicted by the Auslander-Reiten quiver of
mod$R$, which consists
of a preprojective and a preinjective component, denoted by  $\p$ and $\q$, respectively, and    a  family of orthogonal tubes
$\tube=\bigcup_{\lambda\in\mathbb{X}}\tube_\lambda$ containing the  regular modules. For details we refer e.g.~to \cite{ARS}.

Given a quasi-simple  (or simple regular) module $S$, that is, a module at the mouth of  a tube $\tube_\lambda$, we denote by $S[m]$ the module of regular length $m$ on the ray
$$S=S[1]\subset S[2]\subset\cdots\subset S[m]\subset S[m+1]\subset \cdots~$$
and let
$S[\infty]=\lim\limits_{m\to\infty}S[m]$ be the corresponding {\it Pr\"{u}fer} module. The {\it adic} module $S[-\infty]$ corresponding to $S$ is defined as the inverse limit along the coray ending at $S$.
We denote by $G$ the {\it generic} module. It is the unique indecomposable infinite dimensional module which has finite length over its endomorphism ring.





{\subsection{{Over the Kronecker algebra}}\label{Kronecker}}

We start by reviewing the case when $R$ is the Kronecker algebra, i.e.~the path algebra of the quiver
$ \bullet\rlap{\raise3pt\hbox{$\xrightarrow{}$}}
        \lower4pt\hbox{$\xrightarrow[]{}$}\,\bullet$.
 Denote by $P_{i}$ (respectively $Q_{i}$), with $i\in\mathbb{N}$,
 the (finite dimensional) indecomposable preprojective (respectively,
preinjective) right $R$-modules, indexed such that dim$_{k}$Hom$_{R}(P_{i},P_{i+1})=2$ (respectively, dim$_{k}$Hom$_{R}(Q_{i+1}, Q_{i})=2$). Recall that $P_{1}$ is simple projective and
embeds in all Kronecker modules but the modules in Add$Q_{1}$, and $Q_{1}$ is simple
injective with a surjection from all Kronecker modules but the modules in Add$P_{1}$.

By \cite[Theorem~6.1]{KRAUSE}, every homological ring epimorphism $R\to S$ is equivalent to a universal localization at a set of (projective resolutions of) finitely presented modules. Here is a complete list of the epiclasses of $R$, together with the corresponding bireflective subcategories $\mc{X}$ of Mod$R$:

$\bullet $ $R\rightarrow 0$ and $id_{R}: R \rightarrow R$,

$\bullet$ the universal localization at $P_{1}$ with $\mc{X}=\mr{Add}Q_{1}$,

 $\bullet$ the universal localization at $P_{i+1}, i\geq 1$, with $\mc{X}=\mr{Add}P_{i}$,

$\bullet$ the universal localization at $Q_{i},i\geq 1$, with $\mc{X}=\mr{Add}Q_{i+1}$,

$\bullet$ the universal localization at a non-empty set $\mc{U}$ of simple regular modules, with $\mc{X}=\mc{U}^{\perp}$.

\smallskip

\noindent
Notice that the epimorphisms in this list are either surjective with an idempotent
kernel, or injective, and the only non-injective ones are $R\rightarrow 0$ and the universal
localizations at the projective modules $P_{1}$ and $P_{2}$.

\smallskip

The following is a complete list of silting
right $R$-modules, up to equivalence:

$\bullet$ $0,P_{1},Q_{1}$, the only silting modules that are not tilting,

$\bullet$ $P_{i}\oplus P_{i+1}, i\geq 1$,

$\bullet$ $Q_{i+1} \oplus Q_{i}, i\geq 1,$

$\bullet$ $R_{\mc{U}}\oplus R_{\mc{U}}/R$, where  $\mc{U}$ is a non-empty set of simple regular modules,

$\bullet$  the {Lukas tilting module} {$\mathbf{L}$ with  Gen$\mathbf{L}={^{\perp_0}\mathbf{p}}$, the unique non-minimal silting module.

\smallskip

\noindent
For details we refer to \cite{AS2}, \cite[Example 5.19]{AMV2}.

\medskip
\subsection{Classification of tilting modules}

Let us now return to an arbitrary tame hereditary algebra $R$. The large  tilting $R$-modules have been classified in \cite{AS2}. In contrast to the Kronecker case, they can  have finite dimensional summands. This is due to the existence of (at most three) non-homogeneous tubes.   Notice, however, that the finite dimensional part of a large tilting module can be described explicitly. In order to explain this, we need to recall some terminology.

\begin{defn} {\rm
(1) An $R$-module $Y$ is said to be {\it exceptional} if $\mr{Ext}^{1}_{R}(Y,Y)=0$.

(2) Given a tube $\mathbf{t}_{\lambda}$ of rank $r> 1$ and a module $X=U[m]\in\mathbf{t}_{\lambda}$ of regular length $m< r$, we consider the full subquiver $\mc{W}_{X}$ of $\mathbf{t}_{\lambda}$ which is isomorphic to the Auslander-Reiten-quiver $\Theta(m)$ of the linearly oriented quiver of type $\mathbb{A}_{m} $ with $X$ corresponding to the projective-injective vertex of $\Theta(m)$.  The set $\mc{W}_{X}$   is called a {\it wing} of $\mathbf{t}_{\lambda}$ of size $m$, which is {\it rooted in the  vertex} $X=U[m]$.

(3) A finite dimensional regular multiplicity-free exceptional $R$-module $Y$ is a {\it branch module} if it satisfies the following condition:
For each quasi-simple module $S$ and $m\in\mathbb{N}$ such that $S[m]$ is a direct summand of $Y$, there exist precisely $m$ direct summands of $Y$ that belong to $\mc{W}_{S[m]}$.
}
\end{defn}

In other words, a branch module is a regular multiplicity-free exceptional module whose indecomposable summands are arranged in disjoint wings, and the number of summands from each wing equals the size of that wing.

\smallskip

Now it is shown in \cite{AS2} that the large tilting modules over $R$
are parametrized by pairs $(Y,P)$ where $Y$ is a branch module, and $P$ is subset of $\mathbb{X}$.
More precisely, every such  pair $(Y,P)$ determines two sets of quasi-simple modules:
{
\begin{itemize}\item
the set $\mc{V}=\mc{V}_{(Y,P)} $ given by all quasi-simple modules in $\bigcup\limits_{\lambda\in P}\mathbf{t}_{\lambda}$ and all regular composition factors of $Y$,
\item
 the set
$\mc{U}=\mc{U}_{(Y,P)} $ given by all quasi-simple modules in $\bigcup\limits_{\lambda\in P}\mathbf{t}_{\lambda}$ that are not regular composition factors of $\tau^-Y$.
\end{itemize}
}
\noindent With these sets one can construct a tilting module
$$T_{(Y,P)}=Y\oplus(\mathbf{L}\otimes_{R}R_{\mc{V}})\oplus\bigoplus_{S\in \mc{U}} S[\infty],$$
and it turns out that the  modules $T_{(Y,P)}$ form
 a complete irredundant list of all large tilting modules, up to equivalence. For details we refer to \cite{AS2,A}.


Our aim is to show that a large tilting module $T_{(Y,P)}$ is minimal if and only if the set $P\subset \mathbb{X}$ is non-empty. The only-if part of this statement is already contained in a result from \cite{AS2}, which we briefly recall  for the reader's convenience.

\begin{prop}\label{101}{\rm (cf.~\cite[Corollary ~5.10]{AS2})} If $T_{(Y,P)}$ is a minimal tilting module, then the set $P$ is not empty.
\end{prop}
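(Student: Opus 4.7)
The plan is to prove the contrapositive: assume $P = \emptyset$ and show $T := T_{(Y,\emptyset)}$ is not minimal. Since the proposition is explicitly flagged as a restatement of \cite[Corollary 5.10]{AS2}, the goal is to translate that result into the language of minimal tilting modules via Theorem~\ref{00}, rather than to reprove the underlying fact about universal localizations from scratch.

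First I would unpack the structure of $T$. When $P = \emptyset$, the set $\mc{U}_{(Y,P)}$ is also empty by definition, hence
\[ T = Y \oplus (\mathbf{L}\otimes_R R_{\mc{V}}), \]
where $\mc{V}$ consists only of the (finitely many) regular composition factors of $Y$. The crucial feature to record is that $T$ has \emph{no} Pr\"{u}fer and no generic indecomposable summand; every infinite dimensional summand is a direct summand of $\mathbf{L}\otimes_R R_{\mc{V}}$.

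Next I would apply the hereditary bijection of Theorem~\ref{00}: if $T$ were minimal, there would exist a wide subcategory $\mc{M}$ of $\mr{mod}\, R$ and a corresponding universal localization $\lambda : R \to R_{\mc{M}}$ with $T \sim R_{\mc{M}} \oplus R_{\mc{M}}/R$. In particular, $R_{\mc{M}}$ would lie in $\mr{Add}\, T$ and hence carry no Pr\"{u}fer or generic summand. I would then invoke the detailed analysis of universal localizations over a tame hereditary algebra carried out in \cite[Section~5]{AS2}: under the standing hypothesis that $\mr{Gen}(R_{\mc{M}} \oplus R_{\mc{M}}/R) \subseteq \mr{Gen}\, \mathbf{L}$ (which is forced by the explicit form of $T$ above), the right $R$-module $R_{\mc{M}}$ necessarily acquires at least one Pr\"{u}fer summand $S[\infty]$ at a quasi-simple $S$ lying in a tube $\mathbf{t}_\lambda$ with $\lambda$ outside the tubes already touched by $Y$. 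Under the $(Y',P')$-parametrization of \cite{AS2}, this Pr\"{u}fer summand forces $P' \neq \emptyset$, contradicting the original description of $T$ with $P = \emptyset$.

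The main obstacle — and the step that is genuinely doing the work — is the structural claim about summands of $R_{\mc{M}}$. Proving it directly would require redoing the case analysis of \cite[Section~5]{AS2}, tracking how wide subcategories of $\mr{mod}\, R$ interact with the tubular family $\tube$ and how inversion of morphisms between wings produces Pr\"{u}fer modules in the localized ring. Because the authors prefer to cite rather than reproduce that computation, my proof would be short: unpack $T$, apply Theorem~\ref{00}, and appeal to \cite[Corollary~5.10]{AS2} for the Pr\"{u}fer-summand obstruction.
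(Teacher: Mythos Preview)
Your route diverges from the paper's, and the divergence hides a real gap. The proposition is a restatement of \cite[Corollary~5.10]{AS2}, so citing that corollary as a black box is essentially circular; the paper instead \emph{outlines} the argument underlying it. More importantly, the specific mechanism you attribute to \cite{AS2}---that whenever $\mr{Gen}(R_{\mc{M}}\oplus R_{\mc{M}}/R)\subseteq\mr{Gen}\mathbf{L}$ the module $R_{\mc{M}}$ must acquire a Pr\"{u}fer summand $S[\infty]$ in a tube not touched by $Y$---is not the content of that section, and you give no independent proof of it. Without that, nothing in your write-up actually rules out $P=\emptyset$.

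The paper's argument is quite different in flavour and does not go through Pr\"{u}fer summands of $R_{\mc{M}}$ at all. One assumes $T=T_{(Y,\emptyset)}=Y\oplus(\mathbf{L}\otimes_R R_{\mc{V}})$ is equivalent to $S\oplus S/R$ for some ring epimorphism $R\to S$, then uses the torsion pair $(\mr{Gen}\tube,\tube^{\perp_0})$ to split $\mr{Add}T$: the torsion part is $\mr{Add}Y$, the torsion-free part is $\mr{Add}(\mathbf{L}\otimes_R R_{\mc{V}})$. One then shows that $S/R$ is torsion, hence lies in $\mr{Add}Y$, which forces $(S/R)^{\perp_1}=Y^{\perp_1}$. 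Since $(S/R)^{\perp_1}=\mr{Gen}T$ is the tilting class, this gives $\mr{Gen}T=Y^{\perp_1}$. Bongartz completion then produces a finite dimensional tilting module with the same tilting class, contradicting the assumption that $T$ is large. The key idea you are missing is thus ``$S/R$ is torsion, hence finite dimensional, hence the tilting class is already determined by $Y$''---not any statement about Pr\"{u}fer summands of $S$.
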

\begin{proof} We just outline the argument and refer to \cite{AS2} for details.  Assume  $T=T_{(Y,P)}$ is equivalent to a tilting module of the form $S\oplus S/R$ for some ring epimorphism $R\to S$. If $P=\emptyset$, then $T=
Y\oplus(\mathbf{L}\otimes_{R}R_{\mc{V}})$ where $\mc{V}$ is the set of regular composition factors of $Y$. Now one considers the torsion and torsion-free part of $T$ with respect to the torsion pair  $(\mr{Gen}\tube, \tube^{\perp_0})$ generated by $\tube$. It turns out that $Y$ is torsion and $\mathbf{L}\otimes_{R}R_{\mc{V}}$ is torsion-free. Moreover,   $\mr{Add}Y$ contains all torsion modules in $\mr{Add}T$, and  $\mr{Add}(\mathbf{L}\otimes_{R}R_{\mc{V}})$ contains all torsion-free modules in $\mr{Add}T$.
Next, one shows that $S/R$ is torsion and therefore lies in $\mr{Add}Y$. One then deduces that $(S/R)^{\perp_1}=Y^{\perp_1}$. Notice that $(S/R)^{\perp_1}=\mr{Gen}T$, hence our tilting class $\mr{Gen}T$ coincides with  $Y^{\perp_1}$. On the other hand, by a well-known result due to Bongartz, the finite dimensional exceptional module $Y$ can be completed to a finite dimensional tilting module with tilting class $Y^{\perp_1}$. But this contradicts the assumption that $T$ is large.
\end{proof}

\subsection{Minimal tilting modules}

Let us fix a large tilting module $T=T_{(Y,P)}$  with $P\neq\emptyset~$.
We want to prove that $T$ is minimal. To this end, we will use Theorem \ref{00} and show that $T_{(Y,P)}$ arises from universal localization at a wide subcategory $\mc{M}$ of mod$R$.

We start out with an easy, but useful observation.
\begin{rem}\label{perp} {\rm (cf.~\cite[Example~4.4]{AS2}) If $S$ is a quasi-simple module, then $S[\infty]^{\perp_{1}}=\bigcap\limits_{n\geq 1}S[n]^{\perp_{1}}$.

More generally, let $\mc{E}$ be a class of modules, and suppose that a module $X$ lies in $\mc{E}^{\perp_1}$. Then $X$ lies in ${E}^{\perp_1}$ for every module $E$ which is filtered by modules from $\mc{E}$, or which is a submodule of some module in $\mc{E}$.

This follows immediately from the fact that any class of the form ${}{^{\perp_1}}X$ is closed under filtrations by \cite[3.1.2]{gobel}, and when $X$ has injective dimension at most one (as in our case), then it is also closed under submodules.
}\end{rem}

Next, we compute the tilting class of $T$. This amounts to computing
the subcategory $$\mc{S}={^{\perp_{1}}\mr{Gen}T}\cap\mr{mod}R,$$ that is, the largest subcategory $\mc{S}$ of mod$R$ with the property that $\mr{Gen}T=\mc{S}^{\perp_{1}}.$ We are going to see that the indecomposable non-preprojective modules in $\mc{S}$ either lie on a ray starting in $\mc{U}=\mc{U}_{(Y,P)}$, or lie ``below'' an indecomposable summand of $Y$. Here is the precise statement.

\begin{lem}\label{7}

$(1)$ A finitely generated  indecomposable $R$-module $M$ belongs to  $\mc{S}$ if and only if one of the following statements holds true:\begin{itemize}
\item[-] $M$ is  preprojective,
\item[-] $M\cong S[n]$ where $n\geq 1$ and $S$ is in $\mc{U}$,
\item[-]  there is a module $S[h]\in\mr{Add}Y$ such that $M\cong S[i]$ for some   $1\leq i\leq h$.
\end{itemize}
 $(2)$ $\mr{Gen}T=\bigcap\limits_{S\in\mc{U}}S[\infty]^{\perp_{1}}\cap Y^{\perp_{1}}$.

\end{lem}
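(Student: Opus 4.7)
The plan is to prove (2) first and then deduce (1) by a case analysis on indecomposable finitely generated modules. For (2), the inclusion $\mr{Gen}T \subseteq Y^{\perp_1}\cap\bigcap_{S\in\mc{U}}S[\infty]^{\perp_1}$ is immediate because $Y$ and each Pr\"ufer module $S[\infty]$ are direct summands of $T=T_{(Y,P)}$. For the reverse inclusion, take $X$ in the right-hand side; it suffices to verify $\mr{Ext}^1_R(\mathbf{L}\otimes_R R_{\mc{V}},X)=0$, which combined with the hypothesis gives $X\in T^{\perp_1}=\mr{Gen}T$. Here I would apply Remark~\ref{perp}: the hypothesis $X\in S[\infty]^{\perp_1}$ yields Ext-vanishing against every ray module $S[n]$ for $S\in\mc{U}$, and $X\in Y^{\perp_1}$ together with the submodule-closure of ${}^{\perp_1}X$ yields Ext-vanishing against every indecomposable submodule of a summand of $Y$. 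Since $\mathbf{L}\otimes_R R_{\mc{V}}$ can be filtered, by its construction in \cite{AS2}, by such building blocks together with preinjective modules (against which Ext-vanishing follows from AR-duality and $\mr{Gen}T\subseteq{}^{\perp_0}\p$), the filtration-closure in Remark~\ref{perp} finishes the argument.

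For (1), write $\mc{T}$ for the tilting class $\mr{Gen}T$ identified in (2). The three listed families lie in $\mc{S}={}^{\perp_1}\mc{T}\cap\mr{mod}R$ directly: a preprojective $M$ satisfies $\mr{Ext}^1(M,X)\cong D\mr{Hom}(X,\tau M)$ by AR-duality, which vanishes since $X\in\mc{T}\subseteq{}^{\perp_0}\p$; the ray modules $S[n]$ with $S\in\mc{U}$ are handled by Remark~\ref{perp} applied to the Pr\"ufer summands of $T$; and truncations $S[i]$ of summands $S[h]\in\mr{Add}Y$ follow from the submodule part of Remark~\ref{perp} together with $\mc{T}\subseteq Y^{\perp_1}$.

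For the converse in (1), I would exhibit for every other indecomposable $M\in\mr{mod}R$ an explicit $X\in\mc{T}$ with $\mr{Ext}^1(M,X)\neq 0$. If $M$ is preinjective, take $X=\tau M\in\q\subseteq\mc{T}$ (preinjectives lie in $\mc{T}$ because AR-duality and $\mr{Hom}(\q,\tube)=0$ force $\mr{Ext}^1(Y,Q)=0=\mr{Ext}^1(S[n],Q)$ for every preinjective $Q$) and use $\mr{Ext}^1(M,\tau M)\cong D\mr{End}(\tau M)\neq 0$. If $M=S[n]$ with $S$ quasi-simple and $S\notin\mc{V}$, take $X=S[\infty]$, which lies in $\mc{T}$ because the tube of $S$ is orthogonal to every tube touched by $Y$ or $\mc{U}$, and then $\mr{Ext}^1(S[n],S[\infty])\neq 0$ by a direct computation in the tube. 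Finally, when $S\in\mc{V}\setminus\mc{U}$ and $n$ exceeds the bound given by the wings containing $S$ in $\mr{Add}Y$, I would exploit the combinatorics of branch modules from \cite{AS2} to produce the required obstruction. The main obstacle will be the filtration argument in (2), which requires fine information about the construction of $\mathbf{L}\otimes_R R_{\mc{V}}$ from \cite{AS2}; the remaining case analysis is largely bookkeeping once (2) is in hand.
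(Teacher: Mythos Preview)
Your plan reverses the order of the paper's proof, and this reversal creates real problems rather than just stylistic differences.

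For part~(2), the reverse inclusion hinges on your claim that $\mathbf{L}\otimes_R R_{\mc{V}}$ is filtered by ray modules $S[n]$ with $S\in\mc{U}$, by submodules of summands of $Y$, and by preinjectives. This filtration does not exist. As noted in the proof of Proposition~\ref{101} (following \cite{AS2}), the module $\mathbf{L}\otimes_R R_{\mc{V}}$ is torsion-free in the torsion pair $(\mr{Gen}\tube,\tube^{\perp_0})$ generated by the regular modules; hence it has no nonzero regular submodules, let alone a filtration by regulars. Preinjectives do not help either, since they lie in $\mr{Gen}\tube$. Moreover, your justification for Ext-vanishing against preinjectives invokes $\mr{Gen}T\subseteq{}^{\perp_0}\mathbf{p}$, but you do not yet know $X\in\mr{Gen}T$; that is precisely what you are trying to establish. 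So the argument is both unsubstantiated and circular.

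For part~(1), your obstruction in the case $S\notin\mc{V}$ is wrong: Pr\"ufer modules satisfy $\mr{Ext}^1_R(S[n],S[\infty])=0$ for all $n\ge 1$. Indeed, whenever $S[\infty]\in\mr{Add}T'$ for some tilting module $T'$, one has $S[\infty]\in\mr{Gen}T'\subseteq S[\infty]^{\perp_1}=\bigcap_{n\ge1}S[n]^{\perp_1}$ by Remark~\ref{perp}. So the proposed witness $X=S[\infty]$ does not work.

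The paper avoids all of this by proving~(1) first, citing the description $\mc{S}=\mr{add}(\mathbf{p}\cup\tube')$ from \cite[Theorem~2.7]{AS2} and then invoking \cite[Theorem~4.5 and Lemma~3.3]{AS2} to identify which rays lie entirely or partially in $\tube'$. Once~(1) is known, part~(2) is immediate: the easy inclusion is as you say, and for the other one Remark~\ref{perp} shows directly that any $X$ in the right-hand side lies in $\tube'^{\perp_1}=\mc{S}^{\perp_1}=\mr{Gen}T$, with no need to analyse $\mathbf{L}\otimes_R R_{\mc{V}}$ at all.
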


\begin{proof} $(1)$ We know from \cite[Theorem 2.7]{AS2} that $\mc{S}=\mr{add}(\mathbf{{p}}\cup \mathbf{t}^{'})$, where $\mathbf{t}^{'}$ is a set of regular modules. Take a quasi-simple $S$ whose ray $\{S[n]\,|\, n\geq 1\}$ contains some modules from $\mathbf{t}^{'}$. If the whole ray is contained in $\mathbf{t}^{'}$, then by \cite[Theorem 4.5]{AS2}, the Pr\"ufer module $S[\infty]\in\mr{Add}T$, and $S\in\mc{U}$. If $\tube'$ contains some, but not all modules from the ray, then $\mathbf{t}^{'}\cap\{S[n]\,\mid\, n\geq 1\}=\{S[i]\,\mid\,i\leq h\}$
with  $S[h]\in\mr{Add}Y$ by $\mr{\cite[Lemma~3.3]{AS2}}$. Hence $\mathbf{t}^{'}$ and $\mc{S}$ have the stated shape.

$(2)$  $\mr{Gen}T=T^{\perp_{1}}\subseteq \bigcap\limits_{S\in\mc{U}}S[\infty]^{\perp_{1}}\cap Y^{\perp_{1}}$ since $S[\infty]$, $S\in\mc{U}$, and $Y$ are summands of $T$.

 Now take $X\in \bigcap\limits_{S\in\mc{U}}S[\infty]^{\perp_{1}}\cap Y^{\perp_{1}}$. By Remark~\ref{perp}, it follows that $X$
lies in  $\bigcap\limits_{n\geq 1}S[n]^{\perp_{1}}$ for all $S\in\mc{U}$.
Moreover, if $S[h]\in\mr{Add} Y$, then $X$ lies in $S[h]^{\perp_{1}}$ and also in $S[i]^{\perp_{1}}$ for each $1 \leq i\leq h$,  due to the inclusion $S[i]\hookrightarrow S[h]$.
 This shows that $X\in \mathbf{t}^{'\perp_{1}}=\mc{S}^{\perp_{1}}=\mr{Gen}T$.
\end{proof}

 We want to find a wide subcategory $\mc{M}\subset $ mod$R$ which corresponds to $T$ under the bijection in Theorem \ref{00}.  From {$\mr{\cite[Corollary~4.13]{AS1}}$} and $\mr{\cite[Theorem~2.6]{scho}}$, we know that the tilting module
$ R_{\mc{M}}\oplus R_{\mc{M}}/ R$ given by a wide subcategory $\mc{M}$ has  tilting class $\mr{Gen}(R_{\mc{M}})=\mc{M}^{\perp_{1}}$. Hence $\mc{M}$ must satisfy $\mr{Gen}T=\mc{M}^{\perp_{1}}$, and in particular it must be contained in $\mc{S}$. Furthermore, the class $\mc{M}^{\perp_{1}}$ must be contained in $Y^{\perp_{1}}$ and in
each $S[\infty]^{\perp_{1}}$ with $S\in\mc{U}$.

To construct such $\mc{M}$, we will therefore  pick modules from $\mc{S}$ which filter the Pr\"ufer modules $S[\infty]$ with $S\in\mc{U}$. This will ensure the inclusion $\mc{M}^{\perp_{1}}\subseteq \bigcap\limits_{S\in\mc{U}}S[\infty]^{\perp_{1}}$ by Remark~\ref{perp}.

Moreover, for each ray $\{S[n]\,|\, n\geq 1\}$ containing a module in $\mr{Add}Y$,
we will include in $\mc{M}$ the module $S[h]\in\mr{Add} Y$ of maximal regular length on that ray. This will entail the inclusion  $\mc{M}^{\perp_{1}}\subseteq Y^{\perp_{1}}$, again by Remark~\ref{perp}.  In fact, a closer look shows that it suffices to take care of the rays starting in quasi-simple modules that are not in  $\mc{U}$, because for $S\in\mc{U}$ we already have  $\mc{M}^{\perp_{1}}\subseteq \bigcap\limits_{n\ge 1}S[n]^{\perp_{1}}$.

So, given a wing  $\mc{W}=\mc{W}_{U[m]}$  inside a tube $\mathbf{t}_{\lambda}$, where the vertex $U[m]$ belongs to $\mr{Add}Y$,{ we consider}  the following two sets$$\mc{X}_{\mc{W}}=\{S[h]\,|\,  S[h] \in\text{Add}Y \text{ of maximal regular length on a ray starting in } S\in \mc{W}\},$$
and
$$\widetilde{\mc{X}}_{\mc{W}} = \{S[h] \,|\,  S[h] \in\text{Add}Y \text{ of maximal regular length on a ray starting in } S\in
 \mc{W}\setminus\{U\}\}.$$

We will now follow this strategy and  construct the category $\mc{M}$.

\begin{cons}{\rm
Let $ Q:=\{\lambda\in\mathbb{X}|~\textbf{t}_{\lambda}\cap\mr{Add}Y\neq\emptyset\}$, and fix  $\lambda\in Q$.
Let $\{S_{1},\cdots,S_{r}\}$ be a complete irredundant set of the quasi-simple modules in $\textbf{t}_{\lambda}$. By $\mr{\cite[Proposition~3.7]{AS2}}$,  the summands of $Y$ belonging to $\textbf{t}_{\lambda}$ are arranged in disjoint wings $\mc{W}_{1},\ldots, \mc{W}_{l}$ in $\textbf{t}_{\lambda}$ whose vertices $S_{n_{1}}[m_{1}], \ldots, S_{n_{l}}[m_{l}]$ belong to  $\mr{Add}Y$. In other words,
$$\textbf{t}_{\lambda}\cap\mr{Add}Y\subseteq \bigcup\limits_{j=1}^{l}\mc{W}_{j}.$$
Observe that $S_{n_1},\ldots,S_{n_l}\in\mc{U}$ whenever $\lambda\in Q\cap P$.
We now define the set
$$\mc{X}(\lambda)=\bigcup\limits_{j=1}^{l}\mc{X}_{j}$$ where for each $1\le j\le l$ the set $\mc{X}_{j}$ is given as follows:
$$\left\{
\begin{array}{ll}
 \mc{X}_{j}=\mc{X}_{\mc{W}_j}&\text{if } \lambda\in Q\setminus P,  \\
\mc{X}_{j}=\widetilde{\mc{X}}_{\mc{W}_j} & \text{if } \lambda\in Q\cap P. \end{array}
\right.
$$
When $\lambda\in Q\cap P$, we also need to filter the Pr\"ufer modules  {$\{S[\infty]|~S\in\mc{U}\}$}, so we take
$$\mc{R}(\lambda)=\bigcup\limits_{j=1}^{l}\mc{R}_{j}$$
where
$$\begin{array}{l}
\mc{R}_{j}=
\{S_{n_{j}}[k]\,|\, ~m_{j}< k \leq n_{j+1}-n_{j}\}\cup \{S_{i}| ~n_{j}+m_{j}+1 \leq i < n_{j+1}\}~~ \mr{for} ~ 1\leq j < l,\\
\mc{R}_{l}=
\{S_{n_{l}}[k]\,|\, ~m_{l}< k \leq r+n_{1}-n_{l}\}\cup\{S_{i}| ~n_{l}+m_{l}+1\leq i \leq r \} \cup\{S_{i}| ~1\leq i < n_{1} \}.
\end{array}$$
Now we define
 $$\mc{Q}(\lambda)=\left\{
\begin{array}{ll}
\mc{X}(\lambda)& \mr{if} ~ \lambda\in Q\setminus P\\
\mc{X}(\lambda)\cup \mc{R}(\lambda)& \mr{if} ~ \lambda\in Q\cap P
\end{array}
\right.
$$
and let $\mc{M}={^{\perp}(\mc{Q}^{\perp})}\cap \mr{mod}R$ be the wide closure of the set
$$\mc{Q}=\bigcup\limits_{\lambda\in Q}\mc{Q}({\lambda})\,\cup \bigcup\limits_{\mu\in P\setminus Q}\textbf{t}_{\mu}.$$
}\end{cons}

\begin{exmp}{\rm In Figure 1 we  illustrate the definition of   $\mc{R}_{\lambda}$
by considering the case of $l=2$ wings $\mc{W}_1$ and $\mc{W}_2$ rooted in the vertex $S_1[3]$ and $S_7[4]$, respectively, inside a tube $\tube_\lambda$ of rank $r=12$.
Then $\mc{R}_{1}=\{S_{{1}}[4], S_{{1}}[5], S_{{1}}[6], S_5, S_6\}$, and
$\mc{R}_{2}=\{ S_{7}[5], S_{7}[6], S_{12}\}$.

\begin{figure}[h]
$$
\def\c{\circ} \def\b{\bullet} \def\d{\cdot} \xymatrix@-1.15pc@!R=5pt@!C=5pt{ & & & &
   & & & & & & & & & & & & & &  & &
  &  & \\
  \vdots &\d\ar[rd]& &\d\ar[rd]& &\star\ar[rd]& &\d\ar[rd]& &\d\ar[rd]&
  &\d\ar[rd]& &\d\ar[rd]& &\d\ar[rd]& &\ast\ar[rd]&
  &\d\ar[rd]& &\d\ar[rd]& \vdots \\
  \d\ar[ru]\ar[rd] & &\d\ar[rd] \ar[ru]& &\star\ar[rd]\ar[ru] & &\d\ar[ru]\ar[rd] &
  &\d\ar[ru]\ar[rd]& &\d\ar[ru]\ar[rd] & &\d\ar[ru]\ar[rd]&
  &\d\ar[rd]\ar[ru]& &\ast\ar[rd]\ar[ru]&
  &\d\ar[ru]\ar[rd]& &\d\ar[ru]\ar[rd]& &\d\\
  & \d\ar[rd]\ar[ru]& &{\star}\ar[ru]\ar[rd]& &\d\ar[ru]\ar[rd]&
  &\d\ar[ru]\ar[rd]& & \d\ar[ru]\ar[rd]& &\d\ar[ru]\ar[rd]&
  &\d\ar[rd]\ar[ru]& &\d\ar[rd]\ar[ru]& &\d\ar[ru]\ar[rd]&
  &\d\ar[ru]\ar[rd]& &\d\ar[ru]\ar[rd]& \\
  \d\ar[ru]\ar[rd]& & \b\ar[ru]\ar[rd]& &\d\ar[ru]\ar[rd]&
  &\d\ar[ru]\ar[rd]& &\d\ar[ru]\ar[rd]& &\d\ar[ru]\ar[rd] &
  &\d\ar[rd]\ar[ru]& &\d\ar[ru]\ar[rd]& &\d\ar[ru]\ar[rd]&
  &\d\ar[ru]\ar[rd]& &\d\ar[ru]\ar[rd]& & \d\\
  &\b\ar[ru]\ar[rd]& &\d\ar[ru]\ar[rd]& &\d\ar[ru]\ar[rd]&
  &\d\ar[ru]\ar[rd]& &\d\ar[ru]\ar[rd]& &\d\ar[rd]\ar[ru]& &\d\ar[ru]\ar[rd]&
  &\d\ar[ru]\ar[rd]& &\d\ar[ru]\ar[rd]&
  &\d\ar[ru]\ar[rd]& &\d\ar[rd]\ar[ru]& \\
  \d\ar[ru] \ar @{-}[uuuuu]& &\b\ar[ru] & &\d\ar[ru] &
  &\d\ar[ru] & &\star\ar[ru] & &\star\ar[ru] & & \d\ar[ru]
  & & \d\ar[ru] & & \d\ar[ru] & & \d\ar[ru] & & \d\ar[ru] & &
  \ast \ar @{-}[uuuuu]\\
  S_{{1}}& & & & &
  & & & & & &  &S_{{7}}
  & &  & &  & &  & &  & &S_{12}
  ~
}
$$
\caption{$\bullet=$ direct summands of $Y$ {in the wing $\mc{W}_1$}; $\star$ {and $\ast=$ modules in $\mc{R}_{1}$ and  $\mc{R}_{2}$}}
\end{figure}

Now suppose that $S_1[2], S_1[3], S_2$ are the indecomposable summands of $Y$ lying in the wing $\mc{W}_1$. Then $\mc{X}_1=\{S_2\}$ if $\lambda\in Q\cap P$, while $\mc{X}_1=\{S_1[3], S_2\}$ if $\lambda\in Q\setminus P$.

\medskip

}
\end{exmp}

~

~

~

~

~

\begin{exmp}
{\rm
Let $\mathbf{t}_{\lambda}$ be a tube of rank $r>1$ with quasi-simple modules $S=S_1,\ldots, S_r$.

\smallskip

(1) (cf.~\cite[Example~4.4]{AS2}) Take $T=T_{(Y,P)}$ where $Y=S\oplus\cdots\oplus S[r-1]$ and $P=\{\lambda\}$.
\\
 Then  $\mc{V}=\{S_1,\ldots, S_r\}$
and $\mc{U}=\{S\}$. Moreover, $P=Q$ and $\mc{Q}=\mc{R}(\lambda)=\{S[r]\}$. \\
So, the tilting module
 $$T= S\oplus\cdots\oplus S[r-1]\oplus R_{\mc{V}}\oplus S[\infty]$$ is equivalent to $T_{\mc{M}}=R_{\mc{M}}\oplus R_{\mc{M}}/ R$ for $\mc{M}={^{\perp}(S[r]^{\perp})}\cap\mr{mod}R$.

\medskip

(2)
Let now  $r=3$. Take $T=T_{(Y,P)}$ where $Y=S_1$ and $P=\{\lambda \}$.
\\
 Then  $\mc{V}=\{S_1,S_2, S_3\}$
and $\mc{U}=\{S_1, S_3\}$. Moreover, $P=Q$ and
$\mc{Q}=\mc{R}(\lambda)=\{S_1[2], S_1[3],S_{3}\}$.\\
 So, the tilting module
$$T=S_1\oplus R_{\mc{V}} \oplus S_1[\infty]\oplus S_3[\infty]$$ is equivalent to  $T_{\mc{M}}=R_{\mc{M}}\oplus R_{\mc{M}}/ R$ for
$\mc{M}={^{\perp}(\{S_1[2], S_1[3],S_{3}\}^{\perp})}\cap\mr{mod}R$.

\medskip

{(3)}  Let $r=4${, and let} $\mathbf{t}_{\mu}$  be a tube of rank $r_{\mu}=3$ with  quasi-simples $\{U_{1},U_{2},U_{3}\}$.
Take $T_{(Y,P)}$ where {$Y=S_{1}[3]\oplus S_{2}[2]\oplus S_{2}\oplus U_{1}[2]\oplus U_{2}$ and $P=\{\lambda\}$.} \\
 Then $$ Q=\{\lambda,\mu\},~\mc{X}(\lambda)=\{S_{2}[2]\},
 \mc{X}(\mu)=\{U_1[2],U_2\}~\mr{and}~\mc{R}(\lambda)=\{S_{1}[4]\}.$$
 Moreover, $\mc{Q}=\mc{X}(\lambda)\cup \mc{X}(\mu)\cup \mc{R}(\lambda)$. So, the tilting module $T_{(Y,P)}$ is equivalent to $T_{\mc{M}}=R_{\mc{M}}\oplus R_{\mc{M}}/ R$ for $\mc{M}={^{\perp}(\mc{Q}^{\perp})}\cap\mr{mod}R$.
}\end{exmp}

We collect some easy observations.

\begin{rem}\label{10} {\rm (1)  Since $\mc{Q}\subseteq \bigcup\limits_{\lambda\in P\cup Q}t_{\lambda}$,  its wide closure $\mc{M}$ is contained in $\mr{add}\,(\bigcup\limits_{\lambda\in P\cup Q}t_{\lambda})$.
\\
 (2) By construction, we have  $\bigcup\limits_{j=1}^{l}\mc{W}_{j}\subseteq \mc{R}(\lambda)^{\perp}\,\cap \mathbf{t}_{\lambda}$. Moreover, $\mr{Hom}_{R}(X,Y)\neq0$ for any $X\in\mc{R}(\lambda)$ and $Y\in\mathbf{t}_{\lambda} \setminus\bigcup\limits_{j=1}^{l}\mc{W}_{j}$. It follows that  $\mc{R}(\lambda)^{\perp}\cap \mathbf{t}_{\lambda}=\bigcup\limits_{j=1}^{l}\mc{W}_{j}.$
 }
\end{rem}


\bigskip

The indecomposable modules in $\mc{M}$ are therefore of the form  $M=S[i]\in \tube_{\lambda}$ with $\lambda\in P\cup Q$. If $\lambda\in P\setminus Q$, all modules in $\tube_{\lambda}$ do occur. Let us turn to the remaining  cases.

\begin{prop}\label{4}
Any indecomposable module
in $\mc{M}$ either lies on a ray starting in $\mc{U}$, or there is some $S[h]\in \bigcup\limits_{\lambda\in Q}\mc{X}(\lambda)$ such that $M\cong S[i]$ with $1\le i\le h$.

In particular, $\mc{M}$ is contained in $\mc{S}$.
\end{prop}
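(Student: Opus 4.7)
The plan is to analyze the wide closure $\mc{M}$ tube by tube, using the Hom- and Ext-orthogonality of distinct tubes to reduce to three local arguments inside a single tube $\tube_\lambda$.

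By Remark \ref{10}(1), $\mc{M}\subseteq \mr{add}\bigl(\bigcup_{\lambda\in P\cup Q}\tube_\lambda\bigr)$. Since distinct tubes satisfy $\mr{Hom}=\mr{Ext}^1=0$, the intersection $\mc{M}\cap \tube_\lambda$ is exactly the wide closure, computed inside $\tube_\lambda$, of the generating set $\mc{Q}\cap \tube_\lambda$. So it is enough to treat each $\lambda\in P\cup Q$ separately. If $\lambda\in P\setminus Q$, then $\tube_\lambda\subseteq \mc{Q}$, hence every indecomposable in $\tube_\lambda$ has the form $S[i]$ for a quasi-simple $S$. Since $\lambda\in P$ forces $S\in\mc{V}$, and $\lambda\notin Q$ means no summand of $Y$ lies in $\tube_\lambda$ (so $\tau^-Y$ has no composition factor there), one concludes $S\in\mc{U}$, and $M$ lies on a ray starting in $\mc{U}$.

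For $\lambda\in Q$, the generating set is $\mc{X}(\lambda)$ if $\lambda\notin P$, and $\widetilde{\mc{X}}(\lambda)\cup\mc{R}(\lambda)$ if $\lambda\in P$. Elements of $\mc{X}(\lambda)$ or $\widetilde{\mc{X}}(\lambda)$ are summands of the branch module $Y$, organized in disjoint wings, so they are pairwise rigid; by maximality of regular length on each ray, the only new indecomposables produced by kernels, cokernels, and extensions inside $\tube_\lambda$ are subobjects $S[i]$ of existing $S[h]\in\mc{X}(\lambda)$ with $1\le i\le h$, matching the second type in the statement. For the extra modules in $\mc{R}(\lambda)$, I would verify directly from the wing structure that each $S_{n_j}$ (root of $\mc{W}_j$) and each $S_i$ with $n_j+m_j+1\le i <n_{j+1}$ fails to be a regular composition factor of $\tau^- Y$, hence lies in $\mc{U}$. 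Thus the modules $S_{n_j}[k]$ and $S_i$ in $\mc{R}_j$ are already on rays starting in $\mc{U}$, the first type in the statement.

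The main obstacle is the mixed case $\lambda\in Q\cap P$: one must check that interactions between $\widetilde{\mc{X}}(\lambda)$ and $\mc{R}(\lambda)$ do not yield indecomposables of an unwanted form. The crucial design feature is that $\widetilde{\mc{X}}(\lambda)$ deliberately excludes the ray starting in $S_{n_j}$, so no element $S_{n_j}[m_j]$ occurs in $\widetilde{\mc{X}}(\lambda)$ alongside the longer $S_{n_j}[k]\in\mc{R}_j$; this prevents cokernels $S_{n_j}[k]/S_{n_j}[m_j]$ (whose quasi-simple socle $S_{n_j+m_j}$ need not be in $\mc{U}$) from appearing. A careful combinatorial inspection, combined with the rigidity of the branch summands and Remark \ref{10}(2), then confirms that the wide closure stays within the two allowed families.

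Once this description is established, the inclusion $\mc{M}\subseteq\mc{S}$ is immediate from Lemma \ref{7}(1): both allowed types of indecomposables, namely $S[i]$ with $S\in\mc{U}$ and $S[i]$ with $S[h]\in \bigcup_{\lambda\in Q}\mc{X}(\lambda)\subseteq \mr{Add}Y$ and $1\le i\le h$, appear in the given list of indecomposables of $\mc{S}$.
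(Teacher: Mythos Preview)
Your overall reduction to individual tubes is correct and matches the paper's first move. However, the heart of the argument---showing that the wide closure of $\mc{X}(\lambda)$ (for $\lambda\in Q\setminus P$) or of $\widetilde{\mc{X}}(\lambda)\cup\mc{R}(\lambda)$ (for $\lambda\in Q\cap P$) produces only indecomposables of the two permitted types---is asserted rather than proved. Your justification ``by maximality of regular length on each ray'' together with pairwise rigidity controls \emph{extensions} among the generators, but it says nothing about kernels and cokernels of nonzero maps, which can land on a different ray; one must then argue that this new ray again carries a summand of $Y$ of sufficient length. That this always happens is a genuinely nontrivial consequence of the \emph{branch} structure of $Y$ (not just of rigidity), and it is exactly the content of the paper's Proposition~\ref{2}, proved by a careful induction on the wing size that repeatedly peels off sub-wings using Lemma~\ref{33}. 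Your phrase ``a careful combinatorial inspection \dots\ then confirms'' in the mixed case is a placeholder for precisely this missing argument.

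There is also a methodological difference worth noting. You attempt to build $\mc{M}$ bottom-up from the generators, tracking iterated kernels, cokernels, and extensions. The paper instead works top-down from the definition $\mc{M}={}^{\perp}(\mc{Q}^{\perp})\cap\mr{mod}R$: for a given $S[i]\in\mc{M}$ it first uses Remark~\ref{10}(2) to identify $\mc{R}(\lambda)^{\perp}\cap\tube_\lambda$ with the union of wings $\bigcup_j\mc{W}_j$, and then reduces to showing $S[i]\in{}^{\perp}(\mc{W}_j\cap\mc{X}_j^{\perp})$ forces the existence of a suitable $S[h]\in\mr{Add}Y$. This perpendicular formulation localises the problem to a single wing and makes the induction of Proposition~\ref{2} tractable; your bottom-up approach would have to control an unbounded iteration of closure operations, which is harder to organise.
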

\begin{proof}
Let $M\in\mc{M}$ be indecomposable. Then w.l.o.g.~$M=S[i]\in\tube_{\lambda}$ for some $\lambda\in P\cup Q$. If $M$ does not lie on a ray starting in $\mc{U}$,  we have one of the following cases:
(1)  $\lambda\in Q\cap P$ and $S\notin\mc{U}$, or (2)  $\lambda\in Q\backslash P$. We will show that in both cases there is $h\geq i$ such that $S[h]\in\mc{X}(\lambda)$.

\smallskip

(1) Assume that $\lambda\in Q\cap P$ and $S\notin \mc{U}$. By assumption $S[i]\in {^{\perp}(\mc{Q}^{\perp})}\subseteq {^{\perp}(\mc{Q}^{\perp}\cap \mathbf{t}_{\lambda})}$ and $\mc{Q}^{\perp}= \bigcap\limits_{\mu\in Q}\mc{Q}({\mu})^{\perp} \cap (\bigcup\limits_{\mu\in P \setminus Q}\mathbf{t}_{\mu})^{\perp}$.
Now, we have $\mathbf{t}_{\lambda}\subseteq \mc{Q}_{\mu}^{\perp}$, when $\mu\neq \lambda$ and $\mathbf{t}_{\lambda}\subseteq (\bigcup\limits_{\mu\in P \setminus Q}\mathbf{t}_{\mu})^{\perp}$ since $\lambda\in P \cap Q$. Thus $\mc{Q}^{\perp} \cap \mathbf{t}_{\lambda}=\mc{Q}({\lambda})^{\perp} \cap \mathbf{t}_{\lambda}={\mc{R}(\lambda)}^{\perp}\cap \mathbf{t}_{\lambda}\cap {\mc{X}(\lambda)}^{\perp}$.
From Remark \ref{10}(2) we infer that {$S[i]\in {^{\perp}(\bigcup\limits_{j=1}^{l}\mc{W}_{j} \cap \mc{X}({\lambda})^{\perp})}=\bigcap\limits_{j=1}^{l}{^{\perp}(\mc{W}_{j} \cap \mc{X}({\lambda})^{\perp})}=\bigcap\limits_{j=1}^{l}{^{\perp}(\mc{W}_{j} \cap \mc{X}_{j}^{\perp})}$}, keeping in mind that   $\mc{W}_{j} \subseteq \mc{W}_{k}^{\perp} \subseteq\mc{X}_{k}^{\perp}$ whenever $j\not= k$.

By assumption, we further know that $S\notin\mc{U}$ is a composition factor of $\tau^-Y$. So there is $ 1 \leq j \leq l$ such that $S\in\{S_{n_{j}+1},\cdots,S_{n_{j}+m_{j}}\}$. {Notice that not all   modules in Add$Y\cap \mc{W}_{j}$ can lie on the ray starting in $S_{n_j}$, because otherwise $\mc{X}_{j}=\widetilde{\mc{X}}_{\mc{W}_j}=\emptyset$ and the assumption $S[i]\in {^{\perp}(\mc{W}_{j}\cap\mc{X}_{j}{}^{\perp})}={^{\perp}\mc{W}_{j}}$ would yield a contradiction. In particular, this entails $m_j>1$.}
 So we can apply Proposition \ref{2} (1) below and obtain $i \leq h < m_{j}$ such that $S[h]\in\mc{X}({\lambda})\subseteq \mr{Add}Y$.

(2) Assume that $\lambda\in Q\setminus P$. By construction $\mc{X}({\lambda})= \bigcup\limits_{j=1}^{l}\mc{X}_{j} \subseteq \bigcup\limits_{j=1}^{l}\mc{W}_{j}$, so its wide closure  $^{\perp}(\mc{X}({\lambda})^{\perp})\cap \mr{mod}R$ is contained in  $\mr{add}\bigcup\limits_{j=1}^{l}\mc{W}_{j}$. As different tubes are Hom- and Ext-orthogonal,
$\mc{M}\cap \mathbf{t}_{\lambda}= {^{\perp}(\mc{X}({\lambda})^{\perp})\cap \mathbf{t}_{\lambda}} \subseteq \mr{add} \bigcup\limits_{j=1}^{l}\mc{W}_{j}$.
Since $S[i]\in\mc{M}\cap  \mathbf{t}_{\lambda}$, there is $1 \leq j\leq l$ such that $S[i]\in \mc{W}_{j}$, thus $S\in\{S_{n_{j}},\cdots,S_{n_{j}+m_{j}-1}\}$.
Denote $\mc{W}^{}_{0}=\mc{W}_{\tau S_{n_{j}}[m_{j}+1]}$.
 Note that $\mc{W}^{}_{0} \subseteq \mc{W}_{k}^{\perp} \subseteq\mc{X}_{k}^{\perp}$ whenever $j \neq k$. Hence
 $S[i]\in  {^{\perp}(\mc{X}({\lambda})^{\perp})} \subseteq  {^{\perp}(\mc{W}^{}_{0} \cap \mc{X}_{j}^{\perp})}$. Now we can apply Proposition \ref{2} (2) below and obtain $i \leq h \leq m_{j}$ such that $S[h]\in\mc{X}({\lambda})\subseteq \mr{Add}Y$.
\end{proof}





\begin{lem}\label{33}
Let   $\mc{W}=\mc{W}_{U_{1}[m]}$ be a wing rooted in the vertex $U_{1}[m]\in\mr{Add}Y$.
Set  $U_{2}=\tau^{-1}U_{1},\cdots, U_{m+1}=\tau^{-1}U_{m}$. Denote further $\mc{A}={^{\perp}(\mc{W}\cap \widetilde{\mc{X}}_{\mc{W}}{}^{\perp})}$. Then the following  holds true.


(1) $U_{1}[m]\in \mc{W}\cap \widetilde{\mc{X}}_{\mc{W}}{}^{\perp}$.

(2) Assume that $m>1$, and that there are a subset ${\mc{X}}\subseteq \mathbf{t}_{\lambda}$ and integers  $1\leq j< m$ and $0< n \le m-j$
such that $\mc{W}^{'}:=\mc{W}_{U_{j}[n]}\subset{\mc{X}}^\perp$ and ${\widetilde{\mc{X}}}_{\mc{W}}={\widetilde{\mc{X}}}_{\mc{W}^{'}}\cup {\mc{X}}$.
 Then $\mc{A}
 \subseteq{^{\perp}(\mc{W}^{'}\cap{\widetilde{\mc{X}}}_{\mc{W}^{'}}{}^{\perp})}$.


\end{lem}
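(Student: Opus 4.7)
The plan is to dispatch the two parts separately: part~(1) by a direct Hom--Ext computation inside the wing, and part~(2) by an elementary definition-chase. I expect the only real (and mild) obstacle to be in part~(1): one must pin down that the members of $\widetilde{\mathcal{X}}_{\mathcal{W}}$ actually sit inside the wing, so that the socle argument for Hom and the projective-injectivity argument for Ext can both be applied, and justify passing Ext from the wing to $\mathrm{mod}\,R$.

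For part~(1), since $U_{1}[m]$ is the vertex of $\mathcal{W}$, the containment $U_{1}[m] \in \mathcal{W}$ is immediate, and the task reduces to verifying $U_{1}[m] \in \widetilde{\mathcal{X}}_{\mathcal{W}}^{\perp}$. I would first observe that any $S[h] \in \widetilde{\mathcal{X}}_{\mathcal{W}}$ has $S = U_{i}$ with $2 \leq i \leq m$ and, by the disjointness of branch wings (\cite[Proposition~3.7]{AS2}), lies in $\mathcal{W}$ itself; in particular its composition factors form a subset of $\{U_{2}, \ldots, U_{m}\}$ and do not contain $U_{1}$. The Hom vanishing then follows because any nonzero map $S[h] \to U_{1}[m]$ would have image a nonzero submodule of the uniserial module $U_{1}[m]$, necessarily containing the socle $U_{1}$ as a composition factor --- contradiction. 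For Ext, I would invoke that $U_{1}[m]$ is projective-injective in the wing $\mathcal{W} \simeq \mathrm{mod}\,kA_{m}$; since $\mathcal{W}$ is extension-closed in $\mathrm{mod}\,R$, the injectivity of $U_{1}[m]$ within the wing promotes to $\mathrm{Ext}^{1}_{R}(S[h], U_{1}[m]) = 0$.

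For part~(2), the idea is to establish the stronger set inclusion
\[
\mathcal{W}' \cap \widetilde{\mathcal{X}}_{\mathcal{W}'}^{\perp} \subseteq \mathcal{W} \cap \widetilde{\mathcal{X}}_{\mathcal{W}}^{\perp},
\]
which upon taking left perpendiculars reverses to give $\mathcal{A} = {}^{\perp}(\mathcal{W} \cap \widetilde{\mathcal{X}}_{\mathcal{W}}^{\perp}) \subseteq {}^{\perp}(\mathcal{W}' \cap \widetilde{\mathcal{X}}_{\mathcal{W}'}^{\perp})$. The containment $\mathcal{W}' = \mathcal{W}_{U_{j}[n]} \subseteq \mathcal{W}_{U_{1}[m]} = \mathcal{W}$ is immediate from the size constraints $1 \leq j < m$ and $0 < n \leq m-j$. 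For an arbitrary $X \in \mathcal{W}' \cap \widetilde{\mathcal{X}}_{\mathcal{W}'}^{\perp}$, the hypothesis $\mathcal{W}' \subseteq \mathcal{X}^{\perp}$ supplies $\mathrm{Hom}_{R}(M, X) = \mathrm{Ext}^{1}_{R}(M, X) = 0$ for all $M \in \mathcal{X}$, while $X \in \widetilde{\mathcal{X}}_{\mathcal{W}'}^{\perp}$ furnishes the same vanishing for $M \in \widetilde{\mathcal{X}}_{\mathcal{W}'}$; the assumed decomposition $\widetilde{\mathcal{X}}_{\mathcal{W}} = \widetilde{\mathcal{X}}_{\mathcal{W}'} \cup \mathcal{X}$ then gives $X \in \widetilde{\mathcal{X}}_{\mathcal{W}}^{\perp}$, and combined with $X \in \mathcal{W}' \subseteq \mathcal{W}$ this yields the desired inclusion. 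Here the hypothesis $\mathcal{W}' \subseteq \mathcal{X}^{\perp}$ is precisely what absorbs the enlargement of $\widetilde{\mathcal{X}}_{\mathcal{W}'}$ to $\widetilde{\mathcal{X}}_{\mathcal{W}}$ into the right-perpendicular of $X$, so part~(2) is otherwise a pure unwinding of definitions.
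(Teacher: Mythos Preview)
Your proposal is correct and follows essentially the same approach as the paper. The paper dismisses (1) as ``obvious'' and for (2) writes exactly your inclusion $\mc{W}'\cap\widetilde{\mc{X}}_{\mc{W}'}{}^{\perp}\subseteq \mc{W}\cap\widetilde{\mc{X}}_{\mc{W}'}{}^{\perp}\cap\mc{X}^{\perp}=\mc{W}\cap\widetilde{\mc{X}}_{\mc{W}}{}^{\perp}$ before taking left perpendiculars; your treatment of (1) simply supplies the details the paper omits.
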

\begin{proof}
(1) is obvious. For (2) note that $\mc{W}^{'}\cap{\widetilde{\mc{X}}}_{\mc{W}^{'}}{}^{\perp}\subseteq \mc{W}\cap{\widetilde{\mc{X}}}_{\mc{W}^{'}}{}^{\perp}\cap{\mc{X}}^{\perp}=\mc{W}\cap{\widetilde{\mc{X}}}_{\mc{W}}{}^{\perp}$. Then $\mc{A}\subseteq {^{\perp}(\mc{W}^{'}\cap{\widetilde{\mc{X}}}_{\mc{W}^{'}}{}^{\perp})}$.
\end{proof}
\begin{prop}\label{2}
Let assumptions and notation be as in Lemma~\ref{33}.

(1)  Assume that $m>1$ and that there are $i\in \mathbb{N}$ and  $S\in\{U_{2},\cdots, U_{m},U_{m+1}\}$  such that $S[i]\in\mc{A}={^{\perp}(\mc{W}\cap \widetilde{\mc{X}}_{\mc{W}}{}^{\perp})}$.
Then $S\neq U_{m+1}$ and there is $i \leq h < m$ such that $S[h]\in\mr{Add}Y$.

(2) Denote  $\mc{W}_{0}=\mc{W}_{\tau U_{1}[m+1]}$, and
assume  that there are $i\in \mathbb{N}$ and $S\in\{U_{1},U_{2},\cdots, U_{m}\}$ such that $S[i]\in \mc{B}={}^{\perp}(\mc{W}_{0}\cap \mc{X}_{\mc{W}}{}^{\perp})$.
Then  there is $i \leq h \leq m$ such that $S[h]\in\mr{Add}Y$.
\end{prop}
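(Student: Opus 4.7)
My plan is to argue by induction on the wing size $m$, using Lemma~\ref{33} as the principal tool together with the recursive branch decomposition $Y\cap\mc{W}=\{U_1[m]\}\cup B_L\cup B_R$, where $B_L$ and $B_R$ are branch modules in the sub-wings $\mc{W}_{U_1[k]}$ and $\mc{W}_{U_{k+2}[m-k-1]}$ for some split $k\in\{0,\ldots,m-1\}$.

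For part~(1), the exclusion $S\neq U_{m+1}$ follows directly from Lemma~\ref{33}(1): since $U_1[m]\in\mc{W}\cap\widetilde{\mc{X}}_{\mc{W}}{}^{\perp}$, the hypothesis $S[i]\in\mc{A}$ forces $\mathrm{Ext}^{1}_R(S[i],U_1[m])=0$, and for $S=U_{m+1}$ AR-duality identifies this Ext with $D\mathrm{Hom}_R(U_1[m],U_m[i])$, which is nonzero via the composition $U_1[m]\twoheadrightarrow U_m\hookrightarrow U_m[i]$. For the existence of $h$, I distinguish three cases according to where $U_j$ sits in the decomposition. In the \emph{gap} case $j=k+1$ (with $B_L\neq\emptyset$), an analogous AR-duality computation shows $\mathrm{Ext}^{1}_R(U_{k+1}[i],U_1[k])\neq 0$ via $U_1[k]\twoheadrightarrow U_k\hookrightarrow U_k[i]$, after verifying $U_1[k]\in\mc{W}\cap\widetilde{\mc{X}}_{\mc{W}}{}^{\perp}$ by checking Hom/Ext-vanishing against the summands contributed by both $B_L$ and $B_R$; hence the gap case is vacuous. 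In the cases $j\leq k$ or $j\geq k+2$, $U_j$ lies in a proper sub-wing $\mc{W}'$ rooted at $U_1[k]$ or $U_{k+2}[m-k-1]$, each with vertex in $\mathrm{Add}\,Y$. Setting $\mc{X}=\widetilde{\mc{X}}_{\mc{W}}\setminus\widetilde{\mc{X}}_{\mc{W}'}$ and verifying $\mc{W}'\subseteq\mc{X}^{\perp}$ by direct tube-combinatorial Hom/Ext computation (the modules in $\mc{X}$ come from the opposite sub-branch and are disjoint from $\mc{W}'$), Lemma~\ref{33}(2) yields $\mc{A}\subseteq{^{\perp}(\mc{W}'\cap\widetilde{\mc{X}}_{\mc{W}'}{}^{\perp})}$, and the inductive hypothesis for $\mc{W}'$ (of strictly smaller size) produces the desired $h<m$ with $U_j[h]\in\mathrm{Add}\,Y$. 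The base case $m=2$ is handled directly: $U_1[2]\in\mc{W}\cap\widetilde{\mc{X}}_{\mc{W}}{}^{\perp}$ forces $i=1$; if $U_2\notin Y$ then $\widetilde{\mc{X}}_{\mc{W}}=\emptyset$ so $\mc{A}={^{\perp}\mc{W}}$, but $\mathrm{Ext}^{1}_R(U_2,U_1)\neq 0$ excludes $U_2$ from $\mc{A}$; otherwise $h=1$ works.

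For part~(2), a parallel induction with $\mc{W}_0$ and $\mc{X}_{\mc{W}}$ applies, in which the vertex $U_0[m+1]$ of $\mc{W}_0$ plays the role of $U_1[m]$: one verifies $U_0[m+1]\in\mc{W}_0\cap\mc{X}_{\mc{W}}{}^{\perp}$ by Hom/Ext-vanishing, and $\mathrm{Ext}^{1}_R(U_s[i],U_0[m+1])\cong D\mathrm{Hom}_R(U_0[m+1],U_{s-1}[i])$ is nonzero precisely when $i\geq m-s+2$, so $S[i]\in\mc{B}$ forces $i\leq m-s+1\leq m$. The existence of $h\in[i,m]$ with $U_s[h]\in\mathrm{Add}\,Y$ is then obtained via an analogous sub-wing reduction and induction on $m$.

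The principal technical obstacle is verifying the hypothesis $\mc{W}'\subseteq\mc{X}^{\perp}$ in the sub-wing reduction, which requires a detailed case analysis of Hom/Ext patterns inside $\mathbf{t}_\lambda$ (including ruling out wrap-around when the tube rank $r$ is comparable to $m$); moreover, the restriction $n\leq m-j$ in Lemma~\ref{33}(2) forces an asymmetric treatment of the cases $j\leq k$ and $j\geq k+2$, and in the latter one may need to pass to a strictly smaller sub-wing (still with vertex in $\mathrm{Add}\,Y$) before the lemma can be applied.
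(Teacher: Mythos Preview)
Your overall strategy---induction on the wing size via the branch decomposition $Y\cap\mc{W}=\{U_1[m]\}\cup B_L\cup B_R$ with $B_L\subseteq\mc{W}_{U_1[k]}$ and $B_R\subseteq\mc{W}_{U_{k+2}[m-k-1]}$---is close to the paper's, and your treatment of the exclusion $S\neq U_{m+1}$, the base case $m=2$, the gap case $j=k+1$, and the left sub-wing reduction via Lemma~\ref{33}(2) are all correct.

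The genuine gap is in the right sub-wing case $j\ge k+2$. You propose applying Lemma~\ref{33}(2) with $\mc{W}'=\mc{W}_{U_{k+2}[m-k-1]}$ and $\mc{X}=\widetilde{\mc{X}}_{\mc{W}}\setminus\widetilde{\mc{X}}_{\mc{W}'}$. But the vertex $V:=U_{k+2}[m-k-1]$ is the summand of $Y$ of maximal regular length on the ray from $U_{k+2}$, so $V\in\widetilde{\mc{X}}_{\mc{W}}$; on the other hand the tilde in $\widetilde{\mc{X}}_{\mc{W}'}$ excludes precisely the ray from $U_{k+2}$, so $V\notin\widetilde{\mc{X}}_{\mc{W}'}$. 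Hence $V\in\mc{X}$. Since $V\in\mc{W}'$ and $\mr{Hom}_R(V,V)\neq 0$, the hypothesis $\mc{W}'\subset\mc{X}^{\perp}$ of Lemma~\ref{33}(2) fails. This is the real obstruction; it is not merely the numerical bound $n\le m-j$ (which the proof of Lemma~\ref{33}(2) would in fact allow to be relaxed to $n\le m-j+1$), but the impossibility of separating $\widetilde{\mc{X}}_{\mc{W}}$ from $\mc{W}'$ when $\mc{W}'$ touches the right border of $\mc{W}$.

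The paper's remedy is not a single pass to a smaller sub-wing but an \emph{iterated} descent along the coray ending at $U_m$: at each stage one selects the summand of $Y$ of maximal regular length on that coray, applies Lemma~\ref{33}(2) to the resulting left sub-sub-wing (which does satisfy $n\le m-j$), handles its first ray directly, and repeats on the remaining right piece. The iteration terminates either when no summand lies on the coray (then one reduces to a sub-wing $\mc{W}_{U_k[h-1]}$ strictly inside $\mc{W}$), or when the coray is exhausted. Your remark that ``one may need to pass to a strictly smaller sub-wing'' points toward this but does not supply it. For part~(2) the paper does not run a parallel induction as you suggest; it reduces directly to part~(1) after one splitting step, and your sketch there is too vague to evaluate.
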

\begin{proof}
(1) Since   $\mc{A}\subseteq {^{\perp}U_{1}[m]}$ by Lemma \ref{33}, we have  $S[i]\in\mc{W}$ and $S\in\{U_{2},\cdots,U_{m}\}$.
{Observe that not all modules in Add$Y\cap \mc{W}$ can lie on the ray starting in $U_1$, as we have already seen in the proof of Proposition~\ref{4}.}

We  proceed  by induction on $m$.
{When $m=2$, we know by the observation above
 that $
\mc{W}\cap \mr{Add}Y=\{U_{2}, U_{1}[2]\}$.} Then $\mc{A}={^{\perp}(\mc{W}\cap U_{2}^{\perp} ) } $ does not intersect  $\{U_{2}[t]| ~t \geq 2\}$. Thus $S=U_{2}\in\mr{Add}Y$.

Let $m>2$. We distinguish two cases.

Case (i). Suppose that none of the modules $U_{2}[m-1],U_{3}[m-2],\cdots~,U_{m}$ on the right border of the wing $\mc{W}$ belongs to $\mr{Add}Y$. By the definition of a branch module, we know that  $\mr{Add}Y $ contains precisely $m$ modules in $\mc{W}$: these are $U_{1}[m]$ and  $m-1$ modules in  $\mc{W}^{'}=\mc{W}_{U_{1}[m-1]}$.
 We obtain  $\widetilde{\mc{X}}_{\mc{W}}={\widetilde{\mc{X}}}_{\mc{W}^{'}}$.
By Lemma \ref{33} (here  $\mc{X}=\emptyset$), we have $\mc{A}\subseteq {^{\perp}(\mc{W}^{'}\cap \widetilde{\mc{X}}_{\mc{W^{'}}}{}^{\perp} )}$. By induction assumption,  the claim holds true for all
 $S\in\{U_{2},\cdots, U_{m}\}$.

Case (ii). Suppose that one of the modules $U_{2}[m-1],U_{3}[m-2],\cdots~,U_{m}$ belongs to $\mr{Add}Y$. Choose $U_{k}[h]\in\mr{Add}Y$ of  maximal regular length.  By the definition of a branch module,   the module $U_{1}[t_{1}]$  with $t_{1}=m-h-1=k-2$ must lie in $\mr{Add}Y$.
Furthermore, $\mr{Add}Y $ contains precisely $m$ modules in $\mc{W}$: these are $t_{1}$ modules from $\mc{W}_{1}:=\mc{W}_{U_{1}[t_{1}]}$, together with
$h$ modules from  $\mc{W}_{2}:=\mc{W}_{U_{k}[h]}$ and $U_{1}[m]$.

By induction assumption and Lemma \ref{33} (here $\mc{W}^{'}=\mc{W}_{1}$ {and} $\mc{X}=\mc{X}_{\mc{W}_{2}}$), the claim holds true for $S\in\{U_{2},\cdots,U_{t_{1}},U_{k-1}\}$.
Moreover, the claim is clear for $S=U_{k}$.

We  again have two cases:

Case (i). Suppose that  none of $U_{k+1}[h-1], U_{k+2}[h-2],\cdots,U_{m}$ belongs to $\mr{Add}Y$. Then $U_{k}[h-1]\in \mr{Add}Y$. The claim holds true for all $S\in\{U_{k+1},\ldots,U_m\}$ again by Lemma \ref{33} (here $\mc{W}^{'}=\mc{W}_{U_{k}[h-1]}$ {and}  $\mc{X}= \widetilde{\mc{X}}_{{\mc{W}_{1}}}\cup\{U_{k}[h]\}$) and induction assumption.

Case (ii). Suppose that one  of $U_{k+1}[h-1], U_{k+2}[h-2],\cdots,U_{m}$ belongs to  $\mr{Add}Y$. Choose $U_{k_{2}}[h_{2}]\in\mr{Add}Y$ of maximal regular length. Then   $U_{k}[t_{2}]$ with  $t_{2}=h-h_{2}-1$ must lie in  $\mr{Add}Y$, and the  modules in $\mc{W}_2\cap\mr{Add}Y$ different from the vertex are  distributed in the wings $\mc{W}_3:=\mc{W}_{U_{k}[t_{2}]}$  and $\mc{W}_4:=\mc{W}_{U_{k_{2}}[h_{2}]}$. By Lemma \ref{33} (here $\mc{W}^{'}=\mc{W}_{3}$ {and} $\mc{X}=\widetilde{\mc{X}}_{\mc{W}_{1}}\cup\mc{X}_{{\mc{W}_{4}}}\cup\{U_{k}[h]\}$) and induction assumption, we obtain the claim for $S\in\{U_{k+1},\cdots,U_{k_{2}-1}\}$. Moreover, the claim is clear if $S=U_{k_{2}}$. We proceed in this way, obtaining $$2\leq k=k_{1}< k_{2}< k_{3}<\cdots \leq m,$$ and we keep distinguishing the cases $(i),(ii)$.
After a finite number, say $s$, steps, either  we reach $k:=k_{s}$
 where case (i)  occurs,
  and then we are done, or we reach $k:=k_{s}\leq m$ such that for $h:=h_{s}$ with $k+h=m+1$ we have $U_{k}[h]\in\mr{Add}Y$ and all modules on the coray of $U_{m}$ of regular length $< h$ are in Add$T$. Notice that in the step before, it  only remained to prove the claim for $S\in\{U_{k},\cdots,U_{m}\}$. But here the claim is trivial, as the intersection of the ray of $S$ with the coray of $U_{m}$ is in Add$T$.





\medskip


(2)
Observe that $\tau U_1[m+1]\in \mc{W}_{0}\cap \mc{X}_{\mc{W}}{}^{\perp}$, hence $\mc{B}\subseteq {^{\perp}\tau U_1[m+1]}$. So, $S[i]\in\mc{W}$.
Notice  that the claim is clear for $S=U_{1}$. So, we can assume $m>1$ and $S\in\{U_{2},\cdots,U_{m}\}$.
 We distinguish two cases.

(i) Suppose that none of the modules  $U_{m},U_{m-1}[2],\cdots,U_{2}[m-1]$ belongs to $\mr{Add}Y$.   Then $U_{1}[m-1]\in\mr{Add}Y$.
Let $\mc{W}^{'}=\mc{W}_{U_{1}[m-1]}$. Then $\mc{X}_{\mc{W}}=\widetilde{\mc{X}}_{\mc{W}^{'}}\cup \{U_{1}[m]\}$.
It follows that $\mc{B}\subseteq{^{\perp}(\mc{W}^{'}\cap{\widetilde{{\mc{X}}}}_{\mc{W}^{'}}{}^{\perp})}$. By statement (1), the claim holds for $S\in\{U_{2},U_{3},\cdots,U_{m}\}$.

(ii)
 Suppose that one of the modules  $U_{m},U_{m-1}[2],\cdots,U_{2}[m-1]$ belongs to $\mr{Add}Y$.  Choose $U_{k}[h]\in\mr{Add}Y$ of  maximal regular length. Then  $U_{1}[t_{1}]$  with $t_{1}=m-h-1=k-2$ must lie in $\mr{Add}Y$.
  So, $\mr{Add}Y $ contains precisely $m$ modules in $\mc{W}$: these are the $t_{1}$ modules in $\mc{W}_{1}=\mc{W}_{U_{1}[t_{1}]}$,  together with  $h$ modules from $\mc{W}_{2}=\mc{W}_{U_{k}[h]}$ and $U_{1}[m]$.

 Notice that $\mc{W}_{1}\cap{\widetilde{\mc{X}}}_{\mc{W}_{1}}{}^{\perp}\,\subseteq \mc{W}_{0}\cap{\widetilde{\mc{X}}}_{\mc{W}_{1}}{}^{\perp}\cap \mc{X}_{\mc{W}_{2}}{}^{\perp}\cap U_{1}[m]^{\perp} =\mc{W}_{0}\cap  \mc{X}_{\mc{W}}{}^{\perp} $. Therefore   $\mc{B}\subseteq{^{\perp}(\mc{W}_{1}\cap{\widetilde{\mc{X}}}_{\mc{W}_{1}}{}^{\perp})}$ and the claim follows from statement (1) for $S\in\{U_{2},\cdots,U_{k-1}\}$. Moreover, the claim is clear if $S=U_{k}$. So, it remains to verify the claim when $S\in\{U_{k+1},\cdots,U_{m}\}$.

We  {again have} two cases:

(i) Suppose none of $U_{k+1}[h-1], U_{k+2}[h-2],\cdots,U_{m}$ belongs to $\mr{Add}Y$.
Then $U_{k}[h-1]\in \mr{Add}Y$. It follows that $\mc{B}\subseteq{^{\perp}(\mc{W}^{''}\cap\widetilde{\mc{X}}_{\mc{W}^{''}}{}^{\perp})}$, here $\mc{W}^{''}=\mc{W}_{U_{k}[h-1]}$.
Then the claim follows again from statement (1).

(ii) Suppose one  of $U_{k+1}[h-1], U_{k+2}[h-2],\cdots,U_{m}$ belongs to $\mr{Add}Y$. {Choose $U_{k_{2}}[h_{2}]\in\mr{Add}Y$ of maximal regular length. Then   $U_{k}[t_{2}]$ with  $t_{2}=h-h_{2}-1$ must lie in $\mr{Add}Y$.} It follows that $\mc{B}\subseteq{^{\perp}(\widetilde{\mc{W}}\cap\widetilde{\mc{X}}_{\widetilde{\mc{W}}}{}^{\perp})}$ where $\widetilde{\mc{W}}=\mc{W}_{U_{k}[t_{2}]}$. By statement (1), we obtain the claim for $S\in\{U_{k+1},\cdots,U_{k_{2}-1}\}$. The claim is clear if $S=U_{k_{2}}$.

 Continuing in this fashion  and {arguing} as above,  one obtains the claim for the remaining quasi-simple mosules   $S\in\{U_{k_{2}+1},\cdots,U_{m}\}$.
\end{proof}



Next, we have to show that the Pr\"ufer modules corresponding to quasi-simples in $\mc{U}$ are filtered by modules from $\mc{M}$.


\begin{prop}\label{6}
 If $S\in \mc{U}$ belongs to a tube $\mathbf{t}_{\lambda}$ of rank $r$, then $S[r]\in\mc{M}$.
\end{prop}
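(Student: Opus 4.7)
The goal is to exhibit $S[r]$ as an element of $\mc{M}$, the wide closure of $\mc{Q}$ in $\mr{mod}R$. I would split into two cases according to whether $\lambda$ lies in $P \setminus Q$ or in $P \cap Q$.

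If $\lambda \in P \setminus Q$, then by construction $\tube_\lambda \subseteq \mc{Q} \subseteq \mc{M}$, and so $S[r] \in \mc{M}$ with nothing to do.

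Now suppose $\lambda \in P \cap Q$. Writing $S = S_i$, the condition $S \in \mc{U}$ forces $i \in \{n_j+m_j+1, \ldots, n_{j+1}\}$ for some $j$ (cyclic indexing in $\tube_\lambda$), corresponding either to a gap quasi-simple or to the vertex of the next wing when the preceding gap is non-empty. The strategy is to filter $S_i[r]$ by the canonical chain $S_i \subset S_i[a_1] \subset \cdots \subset S_i[a_t] = S_i[r]$ with breakpoints $0 = a_0 < a_1 < \cdots < a_t = r$ aligned with the wing/gap boundaries one meets when cyclically traversing $\tube_\lambda$ starting at $S_i$. The $l$-th successive quotient is $S_i[a_l]/S_i[a_{l-1}] \cong S_{i+a_{l-1}}[a_l - a_{l-1}]$, so if each such \emph{block} lies in $\mc{M}$, then $S_i[r] \in \mc{M}$ as $\mc{M}$ is closed under extensions. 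Two natural types of blocks appear: (i) \emph{gap blocks} $S_a[b]$ whose composition factors $S_a, S_{a+1}, \ldots, S_{a+b-1}$ all sit in a single gap between consecutive wings and hence belong to some $\mc{R}_k \subseteq \mc{Q}$; such a block is built in $\mc{M}$ by iterating $0 \to S_a[c] \to S_a[c+1] \to S_{a+c} \to 0$ and using that $\mc{M}$ is closed under extensions; and (ii) \emph{wing blocks} $S_{n_k}[n_{k+1}-n_k]$ crossing a wing $\mc{W}_k$ together with its following gap, which belong directly to $\mc{R}_k \subseteq \mc{Q}$ whenever the gap after $\mc{W}_k$ is non-empty.

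The main obstacle I foresee is the case when some wing $\mc{W}_k$ encountered along the traversal has an empty gap after it (that is, $n_{k+1} = n_k + m_k$), so that $S_{n_k}[m_k]$ is a summand of $Y$ and does not lie in $\mc{Q}$; the naive wing block is then unavailable. To overcome this, I would merge consecutive adjacent wings $\mc{W}_k, \mc{W}_{k+1}, \ldots$ into one super-block $S_{n_k}[n_{k+k'+1}-n_k]$ reaching the next wing whose subsequent gap is non-empty, and construct this super-block inductively from the wing block $S_{n_{k+k'}}[n_{k+k'+1}-n_{k+k'}] \in \mc{R}_{k+k'}$ together with modules from $\mc{X}_k \cup \mc{X}_{k+1} \cup \cdots \subseteq \mc{Q}$, via suitable extensions inside $\mc{M}$. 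This step will require a careful induction on the sizes of the merged wings exploiting the recursive branch-module combinatorics of $Y$ restricted to them, in the same spirit as the case analysis carried out in Proposition~\ref{2}, and is the most technical part of the argument.
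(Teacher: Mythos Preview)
Your overall approach is correct and matches the paper's: in the case $\lambda \in P \cap Q$, one filters $S[r]$ by a chain whose successive quotients are the ``wing-plus-gap'' blocks $S_{n_k}[n_{k+1}-n_k]\in\mc{R}_k$ together with an initial gap block built from the quasi-simples in $\mc{R}(\lambda)$, all of which lie in $\mc{Q}$ and hence in $\mc{M}$. The paper carries this out via explicit chains of short exact sequences in several representative cases; your description in terms of a cyclic traversal with breakpoints at the wing boundaries is a clean reformulation of the same computation.

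However, the ``main obstacle'' you foresee is illusory: adjacent wings with an empty gap, i.e.\ $n_{k+1} = n_k + m_k$, cannot occur. Since $Y$ is exceptional and both vertices $S_{n_k}[m_k]$ and $S_{n_{k+1}}[m_{k+1}]$ are direct summands of $Y$, the Auslander--Reiten formula gives
\[
\mr{Ext}^1_R\bigl(S_{n_{k+1}}[m_{k+1}],\, S_{n_k}[m_k]\bigr)\;\cong\; D\,\mr{Hom}_R\bigl(S_{n_k}[m_k],\, S_{n_{k+1}-1}[m_{k+1}]\bigr),
\]
and if $n_{k+1}-1 = n_k+m_k-1$ then the regular top of $S_{n_k}[m_k]$ coincides with the regular socle of $S_{n_{k+1}-1}[m_{k+1}]$, producing a nonzero morphism and hence a nonzero $\mr{Ext}^1$ --- contradicting exceptionality. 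Therefore $n_k + m_k < n_{k+1}$ always holds, so $S_{n_k}[n_{k+1}-n_k]\in\mc{R}_k$ directly, and the entire merging-of-wings workaround via $\mc{X}_k$ is unnecessary. (It is also doubtful that the proposed workaround would succeed: for a wing of size $m_k=1$ one has $\widetilde{\mc{X}}_{\mc{W}_k}=\emptyset$, leaving nothing in $\mc{X}_k$ to build with.) Once this observation is made, the argument becomes routine and the paper's proof proceeds exactly along the lines you sketch, without any recursion on branch combinatorics.
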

\begin{proof}
The claim is clear for $\lambda\in P\setminus Q$ by construction of $\mc{M}$.
So we assume that $\lambda\in P \cap Q$.
Recall that the summands of $Y$ are arranged in the  wings $\mc{W}_1,\dots,\mc{W}_l$ rooted in $S_{n_1}[m_{1}],\ldots,S_{n_l}[m_{l}]$. Since $S$ is not a composition factor of $\tau^-Y$, it either  {lies in $\{S_{n_j},S_{m_{j}+n_{j}+1},\cdots,S_{n_{j+1}-1}\}$ for some $1 \leq j< l$, or in $\{S_{n_l}, S_{n_{l}+m_{l}+1},\ldots,S_{r},S_{1},\ldots,S_{n_{1}-1}\}$.
We can assume w.l.o.g.~that $S$ lies in $\{S_{n_1},S_{m_{1}+n_{1}+1},\cdots,S_{n_{2}-1}\}$}. We will proceed case by case and express $S[r]$ as an iterated extension of modules from $\mc{R}(\lambda)$.

\medskip

Case (i): $S=S_{n_{1}}$. We consider the following short exact sequences
$$0\rightarrow S_{n_{1}}[n_{2}-n_{1}]\rightarrow S_{n_{1}}[r]\rightarrow S_{n_{2}}[r+n_{1}-n_{2}]\rightarrow 0$$
$$0\rightarrow S_{n_{2}}[n_{3}-n_{2}]\rightarrow S_{n_{2}}[r+n_{1}-n_{2}]\rightarrow S_{n_{3}}[r+n_{1}-n_{3}]\rightarrow 0$$
$$\vdots~$$
$$0\rightarrow S_{n_{l-1}}[n_{l}-n_{l-1}]\rightarrow S_{n_{l-1}}[r+n_{1}-n_{l-1}]\rightarrow S_{n_{l}}[r+n_{1}-n_{l}]\rightarrow 0$$
Since $S_{n_{l}}[r+n_{1}-n_{l}]$ and all the first terms of these exact sequences are in $\mc{R}({\lambda})$, we obtain that $S_{n_{1}}[r]\in{\mc{M}}$.


\medskip

Case (ii):  $S=S_{n_{1}+m_{1}+1}$.
Notice that  $\mc{M}$, being extension closed, contains
 the wing  $\mc{W}^{'}$ rooted in ${S_{n_{1}+m_{1}+1}[n_{2}-n_{1}-m_{1}-1]}$.
Consider the exact sequence
$$0\rightarrow S_{n_{1}+m_{1}+1}[n_{2}-n_{1}-m_{1}-1] \rightarrow S_{n_{1}+m_{1}+1}[n_{3}-n_{1}-m_{1}-1]\rightarrow S_{n_{2}}[n_{3}-n_{2}]\rightarrow 0.$$
It follows that the middle term $S_{n_{1}+m_{1}+1}[m_{3}-m_{1}-n_{1}-1]$ is in $\mc{M}$. There is an exact sequence
$$0 \rightarrow S_{n_{1}+m_{1}+1}[n_{3}-n_{1}-m_{1}-1]\rightarrow S_{n_{1}+m_{1}+1}[r]\rightarrow S_{n_{3}}[r+n_{1}+m_{1}+1-n_{3}]\rightarrow 0.$$
Now we need to prove that $ S_{n_{3}}[r+n_{1}+m_{1}+1-n_{3}]\in\mc{M}$.
We have the following exact sequences
$$0\rightarrow S_{n_{3}}[n_{4}-n_{3}]\rightarrow S_{n_{3}}[r+n_{1}+m_{1}+1-n_{3}]\rightarrow S_{n_{4}}[r+n_{1}+m_{1}+1-n_{4}]\rightarrow0 $$
$$0\rightarrow S_{n_{4}}[n_{5}-n_{4}]\rightarrow S_{n_{4}}[r+n_{1}+m_{1}+1-n_{4}]\rightarrow S_{n_{5}}[r+n_{1}+m_{1}+1-n_{5}]\rightarrow0 $$
$$\vdots~$$
$$0\rightarrow S_{n_{l}}[r+n_{1}-n_{l}]\rightarrow S_{n_{l}}[r+n_{1}+m_{1}+1-n_{l}]\rightarrow S_{n_{1}}[m_{1}+1]\rightarrow0 $$
Since $ S_{n_{1}}[m_{1}+1]$  and $S_{n_{l}}[r+n_{1}-n_{l}]$ are in $\mc{R}({\lambda})$, we have that $S_{n_{l}}[r+n_{1}+m_{1}+1-n_{l}]\in\mc{M}$.
It follows that $S_{n_{3}}[r+n_{1}+m_{1}+1-n_{3}]\in\mc{M}$. So, $S_{n_{1}+m_{1}+1}[r]\in \mc{M}$.

Case (iii): $S=S_{n_{2}-1}$.
Consider the exact sequence
$$0\rightarrow S_{n_{2}-1}\rightarrow S_{n_{2}-1}[n_{3}-n_{2}+1]\rightarrow S_{n_{2}}[n_{3}-n_{2}]\rightarrow 0.$$
Since $S_{n_{2}}[n_{3}-n_{2}]$ and $S_{n_{2}-1}$ are in $\mc{M}$, we obtain $S_{n_{2}-1}[n_{3}-n_{2}+1]\in\mc{M}$.
There is an exact sequence
$$0\rightarrow S_{n_{2}-1}[n_{3}-n_{2}+1]\rightarrow  S_{n_{2}-1}[r]\rightarrow S_{n_{3}}[r-1+n_{2}-n_{3}]\rightarrow 0. $$
Now, we need to prove $S_{n_{3}}[r-1+n_{2}-n_{3}]\in{\mc{M}}$. We have a series of exact sequences
$$0\rightarrow S_{n_{3}}[n_{4}-n_{3}]\rightarrow S_{n_{3}}[r-1+n_{2}-n_{3}]\rightarrow S_{n_{4}}[r-1+n_{2}-n_{4}]\rightarrow 0 $$
$$\vdots~$$
$$0\rightarrow S_{n_{l-1}}[n_{l}-n_{l-1}]\rightarrow {S_{n_{l-1}}[r-1+n_{2}-n_{l-1}]}\rightarrow S_{n_{l}}[r-1+n_{2}-n_{l}]\rightarrow 0 $$
$$0\rightarrow S_{n_{l}}[r+n_{1}-n_{l}]\rightarrow S_{n_{l}}[r-1+n_{2}-n_{l}]\rightarrow S_{n_{1}}[n_{2}-n_{1}-1]\rightarrow 0.$$
Since $S_{n_{l}}[r+n_{1}-n_{l}]$ and $S_{n_{1}}[n_{2}-n_{1}-1]$ are in $\mc{R}({\lambda})$, we have $S_{n_{l}}[r-1+n_{2}-n_{l}]\in\mc{M}$.
It follows that $S_{n_{3}}[r-1+n_{2}-n_{3}]\in{\mc{M}}$. So, $S_{n_{2}-1}[r]\in{\mc{M}}$.

The  remaining cases  $S\in\{S_{n_{1}+m_{1}+2},\cdots,S_{n_{2}-2}\}$ are obtained similarly, keeping in mind that  $\{S_{n_{1}}[i]\,| ~m_{1}< i\leq n_{2}-n_{1}\}\subseteq \mc{M}$.\end{proof}


\bigskip

We are now ready to prove that  $T$ arises from the universal localization $R\rightarrow R_{\mc{M}}$.

\begin{thm}\label{8}
$T_{(Y,P)}$ is equivalent to $T_{\mc{M}}= R_{\mc{M}}\oplus R_{\mc{M}}/ R$.
\end{thm}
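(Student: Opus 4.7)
The plan is to identify the tilting classes of $T=T_{(Y,P)}$ and $T_{\mc{M}}=R_{\mc{M}}\oplus R_{\mc{M}}/R$ and then appeal to the fact that two tilting modules are equivalent precisely when their tilting classes agree. Using the formula recalled just after Theorem~\ref{00}, the tilting class of $T_{\mc{M}}$ is $\mc{M}^{\perp_{1}}$, while Lemma~\ref{7}(2) gives
\[
 \mr{Gen}T=\bigcap_{S\in\mc{U}}S[\infty]^{\perp_{1}}\cap Y^{\perp_{1}}.
\]
So the goal reduces to proving $\mc{M}^{\perp_{1}}=\bigcap_{S\in\mc{U}}S[\infty]^{\perp_{1}}\cap Y^{\perp_{1}}$.

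For the inclusion $\mr{Gen}T\subseteq\mc{M}^{\perp_{1}}$, I would invoke Proposition~\ref{4}, which shows $\mc{M}\subseteq\mc{S}$. Combined with the identity $\mc{S}^{\perp_{1}}=\mr{Gen}T$ from Lemma~\ref{7}, this immediately yields $\mr{Gen}T\subseteq\mc{M}^{\perp_{1}}$, so essentially no work is needed here.

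The substantive inclusion is $\mc{M}^{\perp_{1}}\subseteq\mr{Gen}T$, and for this I would rely on Remark~\ref{perp}: since every module in the tilting class has injective dimension at most one, the class $\mc{M}^{\perp_{1}}$ is closed under both filtrations by and submodules of modules in $\mc{M}$. First, for each $S\in\mc{U}$ lying in a tube of rank $r$, Proposition~\ref{6} gives $S[r]\in\mc{M}$; since the Pr\"ufer module can be written as $S[\infty]=\bigcup_{n\geq 1}S[nr]$ with consecutive factors $S[(n+1)r]/S[nr]\cong S[r]$, it is filtered by $S[r]$, so $\mc{M}^{\perp_{1}}\subseteq S[\infty]^{\perp_{1}}$. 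Second, for each indecomposable summand $N=S[i]$ of $Y$ sitting in a wing $\mc{W}_{j}$ of a tube $\tube_\lambda$, I would do a case analysis: if $\lambda\in Q\setminus P$ then $N$ embeds in the maximal-length $S[h]\in\mc{X}_{\mc{W}_{j}}\subseteq\mc{M}$; if $\lambda\in Q\cap P$ and $S\neq S_{n_{j}}$ then similarly $N\hookrightarrow S[h]\in\widetilde{\mc{X}}_{\mc{W}_{j}}\subseteq\mc{M}$; finally, if $\lambda\in Q\cap P$ and $S=S_{n_{j}}$ then $S\in\mc{U}$, so $N$ embeds in $S[\infty]$ and is handled by the previous paragraph. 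In all cases $\mc{M}^{\perp_{1}}\subseteq N^{\perp_{1}}$, hence $\mc{M}^{\perp_{1}}\subseteq Y^{\perp_{1}}$.

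The main subtlety I anticipate is precisely this last boundary case where the vertex $S_{n_{j}}[m_{j}]$ of a wing belongs to $\mr{Add}Y$ but its ray is intentionally deleted from $\widetilde{\mc{X}}_{\mc{W}_{j}}$. Here one must verify the combinatorial fact that $S_{n_{j}}\in\mc{U}$ when $\lambda\in P$, i.e.~that the quasi-simple $S_{n_{j}}$ does not occur as a regular composition factor of $\tau^{-}Y$ (which follows from the disjointness of wings and the description of composition factors of modules in a wing), so that the Pr\"ufer summand $S_{n_{j}}[\infty]$ of $T$ is available to absorb this ray. Once this bookkeeping is in place, combining the two inclusions yields $\mr{Gen}T=\mc{M}^{\perp_{1}}=\mr{Gen}T_{\mc{M}}$, proving the equivalence.
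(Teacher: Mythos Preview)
Your proposal is correct and follows essentially the same route as the paper's proof: reduce to equality of tilting classes via Lemma~\ref{7}(2), get one inclusion from $\mc{M}\subseteq\mc{S}$ (Proposition~\ref{4}), and for the other use Proposition~\ref{6} plus Remark~\ref{perp} to handle the Pr\"ufer summands and then a wing-by-wing case split to handle $Y$. The boundary case you flag (the ray starting at $S_{n_j}$ when $\lambda\in Q\cap P$) is exactly the one the paper addresses, and the fact $S_{n_j}\in\mc{U}$ is already recorded in the construction preceding the theorem.
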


\begin{proof}
 By Lemma \ref{7}, we  need to prove $\bigcap\limits_{S\in\mc{U}}S[\infty]^{\perp_{1}}\bigcap Y^{\perp_{1}}=\mc{M}^{\perp_{1}}$.

Firstly,  $\mc{M}\subseteq \mc{S}$ by Proposition \ref{4}, hence
$\bigcap\limits_{S\in\mc{U}}S[\infty]^{\perp_{1}}\cap Y^{\perp_{1}}=\mc{S}^{\perp_{1}}\subseteq \mc{M}^{\perp_{1}}$.

Secondly, if $S\in \mc{U}$ lies in a tube of rank $r$, then $\mc{M}$ contains $S[r]$ by Proposition \ref{6}, and  we deduce inductively from the short exact sequence
$$0\rightarrow S[r(n-1)]\rightarrow S[nr]\rightarrow S[r]\rightarrow 0 $$
that $S[nr]$ belongs to $\mc{M}$ for all $n\in \mathbb{N}$.
Since $S[\infty]$ is filtered by the $S[nr],\, n\ge 1$, we conclude from Remark~\ref{perp} that
$\mc{M}^{\perp_{1}}
\subseteq
\bigcap\limits_{S\in\mc{U}}S[\infty]^{\perp_{1}}$.

Thirdly, we recall from $\mr{\cite[Proposition~4.2]{AS2}}$ that there is a decomposition
$Y=\bigoplus\limits_{\lambda\in Q}\mathbf{t}_{\lambda}(Y)$ with $\mathbf{t}_{\lambda}(Y)\in \mr{add}\mathbf{t}_{\lambda}$.
If $\lambda\in Q\setminus P$, then the indecomposable summands of $\mathbf{t}_{\lambda}(Y)$ have the form $S[i]$ such that there is $h\ge i$ with $S[h]\in\mc{X}({\lambda})$. In other words, they are submodules of a module from $\mc{X}({\lambda})$.
Similarly, when $\lambda\in Q\cap P$, the indecomposable summands of $\mathbf{t}_{\lambda}(Y)$ are either submodules of a module from $\mc{X}({\lambda})$, or lie on a ray starting in some quasi-simple $S\in\mc{U}$.



Thus $\mc{M}^{\perp_{1}}\subseteq \bigcap\limits_{\lambda\in Q}\mathbf{t}_{\lambda}(Y)^{\perp_{1}}=Y^{\perp_{1}}$, again by Remark~\ref{perp}.
This concludes the proof.
\end{proof}

Combining Theorem \ref{8} and Proposition \ref{101}, we obtain the main result of this section.


\begin{thm}\label{main}
For any pair $(Y,P)$, the tilting module $T_{(Y,P)}$ is minimal if and only if the set $P\subset\mathbb{X}$ is non-empty.
\end{thm}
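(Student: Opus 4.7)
The plan is simply to combine the two big ingredients already developed in this section, since all the substantive work has been done. The statement splits into two implications, and both are essentially available.

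For the ``only if'' direction, there is nothing new to do: Proposition~\ref{101} already establishes that if $T_{(Y,P)}$ is minimal, then $P$ must be non-empty. So the first step of the proof is just to cite this.

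For the ``if'' direction, I would assume $P\neq\emptyset$ and invoke Theorem~\ref{8}, which supplies the wide subcategory $\mc{M}\subseteq\mr{mod}R$ such that $T_{(Y,P)}$ is equivalent to $T_{\mc{M}}=R_{\mc{M}}\oplus R_{\mc{M}}/R$, where $R\to R_{\mc{M}}$ is the universal localization of $R$ at $\mc{M}$. Since $R$ is hereditary, this universal localization is a homological (in particular injective) ring epimorphism by the criterion cited before Theorem~\ref{00}. Then the bijection in Theorem~\ref{00} between epiclasses of homological ring epimorphisms of a hereditary ring and equivalence classes of minimal tilting right modules sends $R\to R_{\mc{M}}$ to $R_{\mc{M}}\oplus R_{\mc{M}}/R$, which is therefore a minimal tilting module. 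Since equivalence classes of tilting modules match equivalence classes of minimal tilting modules under the correspondence in Theorem~\ref{00}, the module $T_{(Y,P)}$ is itself minimal.

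The real obstacle, of course, is not in this final theorem but in the construction of $\mc{M}$ and the verification of Theorem~\ref{8}; once that has been carried out and combined with the hereditary bijection of Theorem~\ref{00}, the present statement is essentially a two-line corollary. So the proof writeup should just be a short paragraph citing Proposition~\ref{101} for one direction and chaining Theorem~\ref{8} with Theorem~\ref{00} for the other.
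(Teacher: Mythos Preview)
Your proposal is correct and matches the paper's own proof, which is literally the one line ``Combining Theorem~\ref{8} and Proposition~\ref{101}, we obtain the main result of this section.'' One small remark: the parenthetical ``(in particular injective)'' is not a consequence of the Krause--{\v S}{\v t}ov{\'\i}{\v c}ek criterion you cite (homological does not imply injective for general universal localizations of hereditary rings), but injectivity is not needed for your argument anyway, since Theorem~\ref{00} already gives minimality from any homological ring epimorphism.
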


{We correct a result from \cite{AS2} which contained an error.
\begin{cro}{\rm (cf.~\cite[Corollary 5.10]{AS2})} Let $T$ be  a large tilting $R$-module. The following statements are equivalent.
\begin{enumerate}
\item[(i)] There exists an injective pseudoflat ring epimorphism $\lambda:R\to S$ such that $S\oplus S/R$ is a tilting module equivalent to $T$.
\item[{(ii)}] $T$ is equivalent to a tilting module $T_{\mc{U}}= R_{\mc{U}}\oplus R_{\mc{U}}/R$ where $\mc{U}\subset \tube$ is a set of finite dimensional indecomposable regular (not necessarily quasi-simple) modules.
\end{enumerate}
\end{cro}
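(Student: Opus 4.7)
The plan is to use Theorem~\ref{main} together with the explicit construction behind Theorem~\ref{8}, and to invoke the bijection in Theorem~\ref{00} for the injectivity clause.

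For (i)$\Rightarrow$(ii), assume $T$ is a large tilting module equivalent to $S\oplus S/R$ for some injective pseudoflat ring epimorphism $\lambda:R\to S$. Since $R$ is hereditary, pseudoflatness implies that $\lambda$ is (equivalent to) a universal localization of $R$ at a wide subcategory of $\mr{mod}R$ by \cite[Theorem~6.1]{KRAUSE}. In particular $T$ arises from a ring epimorphism, so by Theorem~\ref{00} (or Proposition~\ref{bijpresilting}) it is minimal. Theorem~\ref{main} then forces $T\simeq T_{(Y,P)}$ with $P\neq\emptyset$, and Theorem~\ref{8} shows that $T$ is equivalent to $R_\mc{M}\oplus R_\mc{M}/R$, where $\mc{M}$ is the wide closure in $\mr{mod}R$ of the explicit set $\mc{Q}$ of finite dimensional indecomposable regular modules built in the construction preceding Theorem~\ref{8}. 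Since universal localization at a wide subcategory coincides with universal localization at any generating set, the epiclasses of $R\to R_\mc{M}$ and $R\to R_\mc{Q}$ agree, and we may take $\mc{U}:=\mc{Q}\subset\tube$.

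For the converse (ii)$\Rightarrow$(i), fix a set $\mc{U}\subset\tube$ of finite dimensional indecomposable regular modules such that $T\simeq R_\mc{U}\oplus R_\mc{U}/R$ is tilting. The universal localization $\lambda:R\to R_\mc{U}$ is pseudoflat by Schofield's theorem cited above, and homological by \cite[Theorem~6.1]{KRAUSE}. Since the resulting $R$-module $R_\mc{U}\oplus\mr{Coker}\,\lambda$ is tilting by assumption, the bijection in Theorem~\ref{00}, when restricted to tilting right modules, forces $\lambda$ to be injective; this delivers the injective pseudoflat ring epimorphism demanded by (i).

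The only bookkeeping point, and hence the main thing to check in the argument, is the identification used in the forward direction: the wide subcategory attached to an arbitrary given $\lambda$ and the wide closure of the very concrete $\mc{Q}$ produced in Theorem~\ref{8} must define the same universal localization. This follows at once from the uniqueness in Theorem~\ref{22}(1), since both associated bireflective subcategories of $\mr{Mod}R$ are determined by (and determine) the equivalence class of the minimal tilting module $T$.
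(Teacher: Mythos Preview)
Your proof is correct and follows the route the paper intends: the corollary is stated immediately after Theorem~\ref{main} without proof, precisely because (i)$\Rightarrow$(ii) follows from Theorem~\ref{main} together with the explicit regular set $\mc{Q}$ of the Construction and Theorem~\ref{8}, while (ii)$\Rightarrow$(i) follows from the restricted bijection in Theorem~\ref{00}. Two minor remarks. First, your final paragraph is not needed: for (i)$\Rightarrow$(ii) you only have to exhibit \emph{some} regular set $\mc{U}$, and you already did this by taking $\mc{U}=\mc{Q}$ and observing $\mc{Q}^{\perp}=\mc{M}^{\perp}$, so that $R_{\mc{Q}}$ and $R_{\mc{M}}$ lie in the same epiclass; there is no need to compare either with the wide subcategory coming from the original $\lambda$. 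Second, in (ii)$\Rightarrow$(i) your use of Theorem~\ref{00} is exactly right, but you could also note more concretely that, since $\mc{U}\subset\tube$, the bireflective subcategory $\mc{U}^{\perp}$ contains all preprojective modules and in particular a faithful module, so the reflection $R\to R_{\mc{U}}$ is automatically injective; this gives the same conclusion without invoking the full bijection.
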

}

\subsection{Minimal cotilting modules}
The large cotilting modules have been classified in \cite{Buan}. They are also parametrized by the pairs $(Y,P)$ where $Y$ is a  branch module and $P$ is a subset of $\mathbb{X}$. More precisely, the following modules {form} a complete irredundant list of all large cotilting {left $R$-}modules, up to equivalence {(cf.~\cite[\S 8.5]{A})}:
\begin{verse}
 $C_{(Y,P)}=Y\oplus G\oplus \bigoplus \{\mr{all}~S[\infty]~ \mr{in}~ {^{\perp_{1}}Y} \mr{from~tubes}~\mathbf{t}_{x},x\notin P\}\oplus~~~~~~~~~~~~~~~~~~~~~~~~~~~~$
\ \hskip 101pt ${~~~~~~~\prod \{\mr{all}~S[-\infty]~ \mr{in}~ {Y^{\perp_{1}}} \mr{from~tubes}~\mathbf{t}_{\lambda},\lambda\in P\}}$
\end{verse}

In fact, the duality $D=\mr{Hom}_k(-,k)$ induces a bijection between (equivalence classes of) {tilting right $R$-modules and cotilting left $R$-modules}, which restricts to a bijection between minimal tilting and minimal cotilting modules {(see e.g.~\cite[\S 6.1 and 6.2]{A} and~Theorem~\ref{00})}. As shown in \cite[Appendix]{AS2}, this bijection maps $T_{(Y,P)}$ to $C_{(Y,P)}$.

{Notice that} a cotilting module is large if and only if it has an infinite dimensional indecomposable direct summand {\cite[Lemma 2.6]{AS2}}.

\begin{thm}\label{main2}
A large cotilting module  is a minimal cotilting module if and only if it   has an adic module as a direct summand.
\end{thm}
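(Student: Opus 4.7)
The plan is to reduce the claim to Theorem~\ref{main} via the $k$-duality $D=\mr{Hom}_k(-,k)$, and then to verify the equivalence between $P\neq\emptyset$ and the existence of an adic direct summand by a short combinatorial argument inside each tube.

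Since $D$ maps $T_{(Y,P)}$ to $C_{(Y,P)}$ and restricts to a bijection between minimal tilting right $R$-modules and minimal cotilting left $R$-modules (both recalled just above the statement), Theorem~\ref{main} immediately gives that $C_{(Y,P)}$ is a minimal cotilting module if and only if $P\neq\emptyset$. It therefore suffices to show that, for any pair $(Y,P)$ in the classification, the condition $P\neq\emptyset$ is equivalent to $C_{(Y,P)}$ having an adic direct summand.

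One direction is immediate from the explicit formula: the adic summands of $C_{(Y,P)}$ appear only in the factor $\prod\{S[-\infty]\in Y^{\perp_{1}}\mid S\in\tube_\lambda\text{ quasi-simple},\,\lambda\in P\}$, which is empty whenever $P=\emptyset$. For the converse, I would fix $\lambda\in P$ and exhibit at least one quasi-simple $S\in\tube_\lambda$ with $S[-\infty]\in Y^{\perp_{1}}$; such an $S$ contributes an adic direct summand of the product above, hence of $C_{(Y,P)}$. The existence of such $S$ rests on the branch structure of $Y$: the summands of $Y$ inside $\tube_\lambda$ are arranged in finitely many disjoint wings of sizes $m_1,\ldots,m_l$, each strictly less than the rank $r_\lambda$ of $\tube_\lambda$ (since the wing $\mc{W}_{U[m]}$ is only considered for $m<r_\lambda$). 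Consequently the quasi-simples of $\tube_\lambda$ that occur as regular composition factors of the $\tube_\lambda$-part of $Y$ form a proper subset of the $r_\lambda$ quasi-simples of $\tube_\lambda$, leaving at least one quasi-simple $S$ outside this subset.

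The main step that requires care is verifying the Ext-vanishing $\mr{Ext}^{1}_R(Y,S[-\infty])=0$ for the chosen $S$. I would combine Auslander--Reiten duality $\mr{Ext}^{1}_R(Y,S[n])\cong D\mr{Hom}_R(S[n],\tau Y)$, which vanishes for every $n\geq 1$ by the combinatorial choice of $S$, with a Mittag--Leffler argument on the inverse system $\{S[n]\}_n$ defining $S[-\infty]$ (whose transition maps are surjective) to pass to the limit. This is the main technical obstacle; the remainder of the proof is bookkeeping that directly combines the conclusions above with the formula for $C_{(Y,P)}$.
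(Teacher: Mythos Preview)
Your reduction to the condition $P\neq\emptyset$ via the duality $D$ and Theorem~\ref{main} is exactly the paper's first step. For the remaining equivalence the paper is much quicker: it rewrites the adic factor of $C_{(Y,P)}$ as $\prod_{S\in\mc{U}}S[-\infty]$, where $\mc{U}=\mc{U}_{(Y,P)}$ is the set of quasi-simples already introduced in the tilting classification, and then simply observes that $P\neq\emptyset$ if and only if $\mc{U}\neq\emptyset$, which is immediate from the definition of $\mc{U}$.

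Your direct verification contains a genuine, though easily repaired, error. First, the inverse system defining $S[-\infty]$ runs along the \emph{coray} ending at $S$; its terms have regular \emph{top} $S$, and they are not the ray modules $S[n]$ (which have regular socle $S$). Second, once one uses the correct inverse system, the Auslander--Reiten formula gives $\mr{Ext}^1_R(Y,-)\cong D\mr{Hom}_R(-,\tau Y)$ on each term, and since each term has regular top $S$, the vanishing at every finite stage is equivalent to $S$ not being a regular composition factor of $\tau Y$. Your choice---$S$ not a composition factor of $Y$---does not ensure this: if $Y$ is a single quasi-simple $S_1$ in a tube of rank at least two and you pick $S=\tau S_1$, then $S$ avoids the composition factors of $Y$, yet $\mr{Hom}_R(-,\tau Y)$ is non-zero on every coray module with top $S$. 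The repair is immediate (apply your counting argument to $\tau Y$ rather than to $Y$), and then the Mittag--Leffler passage does go through, because $\varprojlim^1$ vanishes on inverse systems of finite-dimensional vector spaces; but as written, the ``combinatorial choice of $S$'' does not deliver the Ext-vanishing you claim.
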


\begin{proof}
  By  {Theorems~\ref{00} and~\ref{main}},  a large cotilting module
$C$ is minimal if and only if  $C=C_{(Y,P)}$ is equivalent to $D(T_{(Y,P)})$ for   a tilting module $T=T_{(Y,P)}$ given by a pair $(Y,P)$ with $P\neq\emptyset$.
Notice that $C_{(Y,P)}$ can be rewritten as follows
 $$C_{(Y,P)}=Y\oplus G\oplus \bigoplus \{\mr{all}~S[\infty]~ \mr{in}~ {^{\perp_{1}}Y} \mr{from~tubes}~\mathbf{t}_{\lambda},\lambda\notin P\}\oplus
 \prod_{S\in\mc{U}} S[-\infty]$$
and  $P\neq \emptyset$
 if and only if $\mc{U}\neq \emptyset$, which amounts to $C$ having an adic  direct summand.
\end{proof}

{\section{Ascent of minimality}}

The behaviour of (co)silting modules under ring extensions has been studied  in
\cite{BREAZ}. In particular, it was shown that over commutative rings every silting module extends to a silting module.

\begin{thm}\label{b}{\rm\cite[Theorems 2.2 and 2.7]{BREAZ}} Let  $\lambda:R\to S$ be a ring homomorphism, and
let $T$ be a silting module with respect to a projective presentation $\sigma$. Then  $T\otimes_RS$ is a silting $S$-module with respect to $\sigma\otimes_R S$ if and only if   $T\otimes_R S$, viewed as an $R$-module, lies in the silting class $\mr{Gen}T$.

The latter condition is verified whenever $R$ and $S$ are commutative rings.
\end{thm}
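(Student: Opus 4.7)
The plan is to reduce the silting condition for $T\otimes_R S$ over $S$ to a statement about the class $\mathcal{D}_{\sigma\otimes_R S}$ and then exploit the tensor–hom adjunction to transfer everything back to $R$-side information. Let me write $\sigma'=\sigma\otimes_R S$, so that $T\otimes_R S=\mathrm{Coker}\,\sigma'$ has a projective presentation by $S$-modules. By adjunction, for any right $S$-module $Y$ we have $\mathrm{Hom}_S(\sigma',Y)\cong\mathrm{Hom}_R(\sigma,Y)$ as maps of abelian groups, where on the right $Y$ is viewed as an $R$-module via restriction along $\lambda$. Consequently, $Y\in\mathcal{D}_{\sigma'}$ if and only if $Y\in\mathcal{D}_\sigma=\mathrm{Gen}_R T$ after restriction to $R$.

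With that identification in hand, the iff statement is now mostly bookkeeping. For the ``only if'' direction, if $T\otimes_R S$ is silting over $S$ with respect to $\sigma'$, then it belongs to its own silting class $\mathrm{Gen}_S(T\otimes_R S)=\mathcal{D}_{\sigma'}$, which by the above observation says exactly that $T\otimes_R S$, restricted to $R$, lies in $\mathrm{Gen}_R T$. For the ``if'' direction, assume $T\otimes_R S\in\mathrm{Gen}_R T$ as $R$-module. The inclusion $\mathrm{Gen}_S(T\otimes_R S)\subseteq\mathcal{D}_{\sigma'}$ follows from the fact that $\mathcal{D}_{\sigma'}$ is closed under direct sums and epimorphic images in $\mathrm{Mod}\,S$, so it suffices to check the single module $T\otimes_R S$ is in $\mathcal{D}_{\sigma'}$, which by adjunction amounts to our hypothesis. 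For the reverse inclusion, take $Y\in\mathcal{D}_{\sigma'}$; then $Y\in\mathrm{Gen}_R T$, so there is an $R$-module epimorphism $T^{(\kappa)}\twoheadrightarrow Y$. Applying $-\otimes_R S$ gives an $S$-module epimorphism $(T\otimes_R S)^{(\kappa)}\twoheadrightarrow Y\otimes_R S$, which I compose with the canonical $S$-module epimorphism $Y\otimes_R S\twoheadrightarrow Y$ (multiplication, available since $Y$ is already an $S$-module) to conclude $Y\in\mathrm{Gen}_S(T\otimes_R S)$.

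The second claim, handling the commutative case, is where the hypothesis must be used, and this is the step that needs the most care. The right $R$-action on $T\otimes_R S$ obtained by restriction from its right $S$-module structure is $(t\otimes s)\cdot r=t\otimes s\lambda(r)$, which in general is not the obvious action $(t\otimes s)\ast r=tr\otimes s=t\otimes\lambda(r)s$. The two coincide exactly when $\lambda(r)$ and $s$ commute in $S$, which is automatic when $R$ and $S$ are commutative since then $\lambda(R)\subseteq S=Z(S)$. Under that assumption, for each $s\in S$ the map $\phi_s:T\to T\otimes_R S$, $t\mapsto t\otimes s$, is a right $R$-module homomorphism; assembling these over all $s\in S$ produces a surjective $R$-module homomorphism $T^{(S)}\twoheadrightarrow T\otimes_R S$, since simple tensors generate the tensor product. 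This exhibits $T\otimes_R S$ as an object of $\mathrm{Gen}_R T$, so the criterion from the first part applies and yields the desired silting $S$-module.

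The main obstacle I anticipate is pinpointing precisely where commutativity enters. The iff portion is formal and works over arbitrary ring homomorphisms; the subtlety lies in noticing that the restricted $R$-action on $T\otimes_R S$ is the ``twisted'' one and that $\mathrm{Gen}_R T$ cannot in general be reached without moving $\lambda(R)$ into the center of $S$. Everything else consists of standard adjunction and closure-property manipulations, and the multiplication epimorphism $Y\otimes_R S\twoheadrightarrow Y$ for $S$-modules $Y$ is the one device that makes the reverse inclusion in the ``if'' direction work.
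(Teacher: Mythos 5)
The paper does not give its own proof of Theorem~\ref{b}; it is cited from Breaz \cite{BREAZ}, so there is nothing to compare against internally. Your argument is, up to one point noted below, correct and self-contained, and the adjunction identification $\mathrm{Hom}_S(\sigma\otimes_R S, Y)\cong\mathrm{Hom}_R(\sigma,Y|_R)$, hence $Y\in\mathcal{D}_{\sigma\otimes_R S}$ if and only if $Y|_R\in\mathcal{D}_\sigma$, is indeed the right engine for both directions.

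The one step that needs more care is your assertion that $\mathcal{D}_{\sigma\otimes_R S}$ is ``closed under direct sums and epimorphic images,'' so that it suffices to check $T\otimes_R S\in\mathcal{D}_{\sigma\otimes_R S}$. Closure under epimorphic images holds for $\mathcal{D}_\tau$ whenever $\tau$ has projective source (lift through the epimorphism, then factor through $\tau$). But closure under coproducts is \emph{not} automatic for $\mathcal{D}_\tau$ when $\tau$ is an arbitrary map of (possibly infinitely generated) projectives --- it is precisely the presilting condition for $\tau$, which is part of what one is trying to establish for $\sigma\otimes_R S$. The gap is easily closed by your own adjunction observation: restriction of scalars commutes with coproducts, so for a family $Y_i\in\mathcal{D}_{\sigma\otimes_R S}$ one has $\bigl(\bigoplus_i Y_i\bigr)|_R=\bigoplus_i(Y_i|_R)\in\mathcal{D}_\sigma=\mathrm{Gen}_R T$, a torsion class and hence closed under coproducts; thus $\bigoplus_i Y_i\in\mathcal{D}_{\sigma\otimes_R S}$. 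Equivalently and more directly, $\bigl((T\otimes_R S)^{(\kappa)}\bigr)|_R\in\mathrm{Gen}_R T$ by hypothesis, so $(T\otimes_R S)^{(\kappa)}\in\mathcal{D}_{\sigma\otimes_R S}$, and the inclusion $\mathrm{Gen}_S(T\otimes_R S)\subseteq\mathcal{D}_{\sigma\otimes_R S}$ then follows from closure under quotients alone. With that inserted, the proof is complete; your treatment of the two competing $R$-actions on $T\otimes_R S$ in the commutative case, via $\lambda(R)\subseteq Z(S)$ and the family of maps $t\mapsto t\otimes s$, is exactly where the commutativity hypothesis enters and is correct.
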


 Of course, if $\lambda:R\rightarrow S$ is surjective with kernel $I$, then  every silting right
$R$-module $T$ verifies the condition $T\otimes_{R}S\simeq T/TI\in\mr{Gen}T$ and therefore extends to a silting $S$-module $T\otimes_R S$, cf.~\cite[Corollary 2.4]{BREAZ}.
In general, however, the condition in the theorem above can fail, even when $T$ is minimal.

\begin{exmp}\label{fail}{\rm Let $R$ be the Kronecker algebra, and let $T$ be  a silting $R$-module. Then
 the  $R$-module $T\otimes_{R}S$ lies in $\mr{Gen}T$ for every homological ring epimorphism $\lambda: R\rightarrow S$ if and only if $T$ is not equivalent to the simple projective module $P_{1}$.

\smallskip

The statement follows from the classification results reviewed in Section~\ref{Kronecker}. We proceed in several steps.

\smallskip

{\bf Step 1.} Let $T=P_{1}$, and let $id_{R}\neq\lambda:R\rightarrow S$ be an injective universal localization. Then the associated bireflective subcategory $\mc{X}$ coincides with Add$P_{i} $ for some $i\geq 2$, or with Add$Q_{i+1}$ for some $i\geq 1$, or with $\mc{U}^{\perp}$ for a non-empty set $\mc{U}$ of simple regular modules.

Notice that in all cases there is a non-trivial map from $T$ to a module in $\mc{X}$. This is clear in the first two cases, and in the third we can for instance choose the embedding of $P_{1}$ in the generic module $G\in\mc{X}$. It follows that the $\mc{X}$-reflection $T\rightarrow T\otimes_{R}S$ is non-trivial.

We claim that $T\otimes_{R}S \neq 0$ does not lie in $\mr{Gen}T$. Indeed, no non-trivial module
in $\mc{X}$ can belong to $\mr{Gen}T=\mr{Add}P_{1}$. Again, this is clear in the first two cases,
and in the third we observe that by the Auslander-Reiten formula $\mr{Ext}^{1}_{R}(S,P_{1})\cong D\mr{Hom}_{R}(P_{1},S)\neq 0$ for every simple regular module $S$.

\smallskip

{\bf Step 2.}
Let now $T=Q_{1}$, and let $\lambda:R \rightarrow S$ and $\mc{X}$ be as above. Then there are no nontrivial maps from $T$ to $\mc{X}$, which is again clear in the first two cases, and in the
third it follows from the fact that $\mr{Hom}_{R}(S,Q_{1})\neq 0$ for every simple regular module
$S$. We conclude that the $\mc{X}$-reflection $T\rightarrow T\otimes_{R} S=0$ is trivial, and $T\otimes_{R}S$ lies in $\mr{Gen}T$.

\smallskip

{\bf Step 3.}
By the discussion above, it remains to prove the if-part of the statement, and w.l.o.g.~we can
assume that $\lambda$ is injective, and T is a tilting module. We have one of the following
cases.

{\bf Case (i).} $\lambda$ is the universal localization at $P_{i+1},i\geq 2$, and $\mc{X}=\mr{Add}P_{i}$.

If $\mr{Hom}_{R}(T,P_{i})=0$, then the $\mc{X}$-reflection $T\rightarrow T\otimes _{R}S=0$ is trivial, and
the claim holds true. If Hom$_{R}(T,P_{i})\neq 0$, then T must be preprojective, because
the non-preprojective silting modules all belong to the class $\mr{Gen}\mathbf{L}={^{\perp_0}\mathbf{p}}$. More
precisely, $T$ must be equivalent to $P_{j}\oplus P_{j+1}$ for some $j\leq i$, hence it generates $P_{i}$,
and therefore also $T\otimes_{R}S$ which belongs to $\mc{X}=\mr{Add}P_{i}$.

{\bf Case (ii).} $\lambda$ is the universal localization at $Q_{i},i\geq 1$, and $\mc{X}=\mr{Add}Q_{i+1}$.

As above we can assume w.l.o.g. that Hom$_{R}(T,Q_{i+1})\neq 0$. If T is preinjective,
then it must be equivalent to $Q_{j}\oplus Q_{j+1}$ for some $j \geq i$, hence it generates $Q_{i+1}$,
and therefore also $T\otimes_{R}S$ which belongs to $\mc{X}=\mr{Add}Q_{i+1}$.

Now assume that T is not preinjective. We know from the classification that
the non-preinjective silting modules all belong to the class $\mathbf{q}^{\perp_0}$ of modules without
non-trivial maps from the preinjective component $\mathbf{q}=\{Q_{1},Q_{2},\cdots\}$, which by the
Auslander-Reiten-formula coincides with the class $^{\perp_{1}}\mathbf{q}$.
In particular, Ext$^{1}_{R}(T,Q_{i+1})=0$, and $Q_{i+1}$ belongs to the tilting class Gen$T${.}
Hence $T$ generates $T\otimes_{R}S$.

{\bf Case (iii).} $\lambda$ is the universal localization at a set $\mc{U}\neq\emptyset$ of simple regulars, and $\mc{X}=\mc{U}^{\perp}$.

Again we assume w.l.o.g. that Hom$_{R}(T,\mc{X})\neq0$. Notice that this implies that
T is not preinjective since $\mc{U}^{\perp}\subset \mathbf{q}^{\perp_0}$. Moreover, $\mc{U}^{\perp}\subset {^{\perp_0}\mathbf{p}}$, which by the Auslander-Reiten formula coincides with the class $\mathbf{p}^{\perp_{1}}$. Hence
Ext$^{1}_{R}(T,T\otimes_{R}S)=0$ and $T\otimes_{R}S$ is $T$-generated whenever $T$ is preprojective.
Moreover, $T\otimes_{R}S\in{^{\perp_0}\mathbf{p}}=\mr{Gen}\mathbf{L}$ is also $T$-generated when $T$ is equivalent to the
Lukas tilting module.

It remains to check the case when $T=R_{\mc{V}}\oplus R_{\mc{V}}/R$ for a non-empty set $\mc{V}$
of simple regular modules. Then Gen$T=\mc{V}^{\perp_{1}}$ consists of the modules $X$ with
Ext$^{1}_{R}(V,X)=0$ for all $V\in\mc{V}$. Pick a module $V\in \mc{V}$. If $V$ also belongs to $\mc{U}$, then
Ext$^{1}_{R}(V,T\otimes_{R}S)=0$.  If $V$ does not belong to $\mc{U}$, then it belongs to $\mc{U}^{\perp}=\mc{X}$, as  different tubes are Hom- and Ext-orthogonal. Since $R_{\mc{V}}\rightarrow R_{\mc{V}}\otimes_{R}S$ is an $\mc{X}$-reflection, we infer that Hom$_{R}(R_{\mc{V}}\otimes_{R}S,V)\cong \mr{Hom}_{R}(R_{\mc{V}},V)$, and by the Auslander-Reiten formula Ext$^{1}_{R}(V,R_{\mc{V}}\otimes_{R}S)\cong \mr{Ext}^{1}_{R}(V,R_{\mc{V}})=0$. We conclude that $R_{\mc{V}}\otimes_{R}S$ is $T$-generated, and so are $R_{\mc{V}}/R\otimes_{R}S$ and $T\otimes_{R}S=(R_{\mc{V}}\otimes_{R}S)\oplus(R_{\mc{V}}/R\otimes_{R}S)$.
}\end{exmp}


Now we return to the commutative case. If $\lambda:R\to S$ is a ring epimorphism and $R$ is commutative, then so is $S$ by \cite{Silver}. It follows from Theorem~\ref{b} that every silting module extends to a silting module along $\lambda$.
In fact, when $R$ is commutative and hereditary, and $\lambda$ is pseudoflat,  also minimality is  preserved. Indeed, $S$ is then hereditary as well by \cite[p.324]{BD}, and all silting modules over commutative hereditary rings
are minimal, as we observe next.

\begin{rem}\label{hermin}
{\rm Let $R$ be a commutative hereditary ring, and let $T$ be a silting module. Denote by $C=T^+$ the corresponding cosilting module. As discussed in \cite[paragraph after Proposition 6.5]{AH}, there is  a flat epimorphism  $\lambda: R \rightarrow S$ such that $S^+\oplus \mr{Ker}\lambda^+$ is a cosilting module equivalent to $C$. Moreover, by Theorem~\ref{00},  we also have a minimal silting module $T'=S\oplus\mr{Coker}\lambda$. Now $T$ and $T'$ are both mapped to $C$ under the silting-cosilting-bijection $T\mapsto T^+$ established in \cite[Corollary 3.6]{AH1}, hence they are equivalent. This shows that $T$ is a minimal silting module.
}
\end{rem}

We now want to determine further conditions ensuring that minimality is preserved by ring extensions. Let us first explore how to relax the assumption hereditary.

\begin{lem}\label{pdim} Let $\lambda:R\to S$ be a pseudoflat ring epimorphism  such that $S_R$ has projective dimension at most one.
Then $T=S\oplus \mr{Coker}\lambda$ is a minimal silting module.
\end{lem}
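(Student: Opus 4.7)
The plan is to establish that $T=S\oplus \mathrm{Coker}\,\lambda$ is silting via a presilting presentation of $S_R$, and then to produce the triangle (1.1) as a mapping cone of $\lambda$ lifted to the derived category, deducing left minimality from the hypothesis $\mathrm{proj.dim}\,S_R\le 1$. First, pick a short exact sequence $0\to P_S\xrightarrow{\sigma_S}Q_S\to S\to 0$ with $P_S,Q_S$ projective, so that $\mathcal{D}_{\sigma_S}=S^{\perp_1}$. To see $\sigma_S$ is a presilting presentation of $S$, I would verify $\mathrm{Gen}\,S\subseteq S^{\perp_1}$: for $X\in\mathrm{Gen}\,S$ fitting in a short exact sequence $0\to K\to S^{(I)}\to X\to 0$, the long exact Ext sequence together with $\mathrm{proj.dim}\,S\le 1$ reduces the vanishing of $\mathrm{Ext}^1_R(S,X)$ to the vanishing of $\mathrm{Ext}^1_R(S,S^{(I)})$; since $S^{(I)}$ belongs to the bireflective subcategory $\mathcal{X}$ associated to $\lambda$, Theorem~\ref{22}(2) gives $\mathrm{Ext}^1_R(S,S^{(I)})\cong \mathrm{Ext}^1_S(S,S^{(I)})=0$. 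Proposition~\ref{bijpresilting} then shows that $T$ is a silting $R$-module arising from $\lambda$.

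Next, lift $\lambda:R\to S$ through $Q_S\twoheadrightarrow S$ to a morphism $\tilde\lambda_0:R\to Q_S$, giving a chain map $\tilde\lambda:R[0]\to \sigma_S$. The mapping cone yields a distinguished triangle
\[R[0]\xrightarrow{\phi}\sigma_S\to C(\tilde\lambda)\to R[1]\]
in $D(\mathrm{Mod}R)$, where $\phi=\tilde\lambda$ and $C(\tilde\lambda)$ is the 2-term complex of projectives $P_S\oplus R\xrightarrow{(\sigma_S,\tilde\lambda_0)}Q_S$ with $H^0=\mathrm{Coker}\,\lambda$. Taking $\sigma=\sigma_S\oplus C(\tilde\lambda)$, a projective presentation of $T$, both $\sigma_0=\sigma_S$ and $\sigma_1=C(\tilde\lambda)$ lie in $\mathrm{Add}\,\sigma$, placing the triangle in the format of (1.1). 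The left $\mathrm{Add}\,\sigma$-approximation property of $\phi$ follows from the presilting vanishing $\mathrm{Hom}_{D(\mathrm{Mod}R)}(\sigma,\sigma^{(I)}[1])=0$ applied to the long exact Hom-sequence of the triangle.

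Finally, $\mathrm{proj.dim}\,S_R\le 1$ ensures $\sigma_S$ is quasi-isomorphic to $S[0]$, so $H^0$ induces isomorphisms $\mathrm{End}_{D(\mathrm{Mod}R)}(\sigma_S)\cong \mathrm{End}_R(S)$ and $\mathrm{Hom}_{D(\mathrm{Mod}R)}(R[0],\sigma_S)\cong S$, under which $\phi$ corresponds to $\lambda$. An endomorphism $\theta$ of $\sigma_S$ in $D(\mathrm{Mod}R)$ with $\theta\circ\phi=\phi$ therefore corresponds to $\bar\theta=H^0(\theta)\in \mathrm{End}_R(S)$ with $\bar\theta\circ\lambda=\lambda$; since $\lambda$ is the $\mathcal{X}$-reflection of $R$ and hence left minimal as an $R$-module homomorphism, $\bar\theta$ is an isomorphism and consequently $\theta$ is an isomorphism in the derived category. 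This proves $\phi$ is left minimal, hence $T$ is a minimal silting module. The main obstacle is exactly this transfer of left minimality from $\mathrm{Mod}R$ to $D(\mathrm{Mod}R)$; the hypothesis $\mathrm{proj.dim}\,S_R\le 1$ is precisely what makes $\sigma_S$ quasi-isomorphic to $S[0]$ and gives a clean identification of the relevant endomorphism rings.
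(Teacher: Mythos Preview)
Your proof is correct and follows essentially the same route as the paper's. The paper delegates the presilting property of $S_R$ to \cite[Example~4.15]{AH} and the construction of the triangle to the proof of \cite[Proposition~1.3]{AMV4}, whereas you spell both out explicitly; the core step---deducing that an endomorphism $\theta$ of $\sigma_S$ with $\theta\phi=\phi$ is an isomorphism by applying $H^0$, using left minimality of $\lambda$, and then invoking that $\sigma_S$ has cohomology concentrated in degree zero---is identical in both arguments.
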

\begin{proof}
We know from  \cite[Example 4.15]{AH} that $S_R$ has a presilting presentation. By the proof of \cite [Proposition 1.3]{AMV4}, the $R$-module homomorphism  $\lambda$ can be lifted to a triangle $R\stackrel {\phi}\rightarrow \sigma_{0} \rightarrow \sigma_{1}\rightarrow R[1]$
 in the derived category $D(\mr{Mod}R)$
such that $T$  is a silting module with respect to the projective presentation $\sigma=\sigma_{0}\oplus\sigma_{1}$. It follows that $\phi$ is a left Add$\sigma$-approximation. It  remains to prove that $\phi$ is left minimal. Consider a morphism  $g\in \text{End}_{D(\mr{Mod}R)}\sigma_{0}$ such that $g\phi=\phi$.
$$\xymatrix{
  R \ar[d]_{\phi} \ar[r]^{\phi} &  \sigma_{0}      \\
  \sigma_{0} \ar[ur]_{g}                     }$$
Applying the cohomology functor $\text{H}^{0}(-)$ to the diagram, we obtain the following commutative diagram.
$$\xymatrix{
  R\ar[d]_{\lambda} \ar[r]^{\lambda} &   S  \\
  S  \ar[ur]_{\text{H}^{0}(g)}                     }$$
 where $\text{H}^{0}(g)$ is an isomorphism since $\lambda$ is left minimal. Since H$^{i}(\sigma_{0})=0$ for any $i\neq 0$, we infer that $g$ is an isomorphism.  It follows that $T$ is a minimal silting $R$-module.
\end{proof}

The push-out (or coproduct) of ring epimorphisms will be an important tool for our considerations.

\begin{lem}\label{a1} {\rm \cite[Proposition 5.2]{BD},\cite[Lemma ~6.2]{chen-xi},\cite [Lemma ~4.1]{AMSTV}}
Let $R$ be an arbitrary ring, and let $\lambda:R\rightarrow S$ and $\lambda^{'}: R\rightarrow S^{'}$ be ring epimorphisms with associated bireflective subcategories $\mathcal{X} $ and $\mathcal{X}^{'}$.
Consider the push-out $S\sqcup_R S^{'}$ of $\lambda$ and $\lambda^{'}$ in the category of rings
\begin{equation}\label{po}\xymatrix{
  R \ar[d]_{\lambda^{'}} \ar[r]^{\lambda}
                & S \ar[d]^{{\mu^{'}}}  \\
  S^{'} \ar[r]_{\mu}
                &     S\sqcup_{R}S^{'}      }\end{equation}
Then $\mu$ and $\mu^{'}$ are ring epimorphisms and  the bireflective subcategory associated to the composition $\mu\lambda^{'}=\mu^{'}\lambda$ is given by $\mathcal{X}\cap\mathcal{X^{'}}$.
If $\lambda$ is pseudoflat, so is $\mu$.  If   $\lambda$ is a universal localization, so is $\mu$.
\end{lem}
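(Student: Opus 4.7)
The plan is to establish the four assertions of the lemma in turn, relying throughout on Theorem~\ref{22}, which matches epiclasses of ring epimorphisms with bireflective subcategories and characterizes pseudoflatness as closure of the bireflective subcategory under extensions. The guiding principle is that, in the category of rings, giving a map out of $S\sqcup_R S'$ amounts to giving a compatible pair of maps out of $S$ and $S'$; translating to modules, an $(S\sqcup_R S')$-action on an abelian group corresponds to a compatible pair of $S$- and $S'$-actions.

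For part~(1), I would simply invoke the categorical fact that a pushout of an epimorphism along any morphism is again an epimorphism; applied in the category of rings, this makes $\mu$ and $\mu'$ ring epimorphisms. For part~(2), I would describe the bireflective subcategory $\mc{Y}\subset\mr{Mod}R$ associated to $\mu\lambda'=\mu'\lambda$ as the class of $R$-modules admitting an extension to an $(S\sqcup_R S')$-module structure. By the pushout's universal property, such an extension is equivalent to admitting compatible $S$- and $S'$-module structures on $M$ that induce the given $R$-action, which by Theorem~\ref{22}(1) applied separately to $\lambda$ and $\lambda'$ translates to the condition $M\in\mathcal{X}\cap\mathcal{X}'$.

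For part~(3), I would view the pushout square from the $S'$ side: the same universal-property argument shows that the bireflective subcategory of $\mu$ inside $\mr{Mod}S'$ consists of those $S'$-modules whose underlying $R$-module (via $\lambda'$) lies in $\mathcal{X}$. Since restriction of scalars along $\lambda'$ is exact, closure of $\mathcal{X}$ under extensions in $\mr{Mod}R$---which holds by Theorem~\ref{22}(2) since $\lambda$ is pseudoflat---transfers to closure under extensions in $\mr{Mod}S'$ of the bireflective subcategory of $\mu$, and $\mu$ is pseudoflat by Theorem~\ref{22}(2) again. For part~(4), if $\lambda:R\to R_\Sigma=S$ is the universal localization at a set $\Sigma$ of maps between finitely generated projective right $R$-modules, the candidate is the universal localization of $S'$ at $\Sigma\otimes_R S':=\{\sigma\otimes_R 1_{S'}\mid\sigma\in\Sigma\}$, a set of maps between finitely generated projective $S'$-modules. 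That $\mu$ inverts $\Sigma\otimes_R S'$ reduces, after the natural identification $S\sqcup_R S'\cong S\otimes_R S'$, to the fact that $\sigma\otimes_R S$ is already an isomorphism. For the universal property, any $(\Sigma\otimes_R S')$-inverting homomorphism $\nu:S'\to T$ yields via $\nu\circ\lambda'$ an $R$-algebra structure on $T$ that inverts $\Sigma$; this lifts uniquely through $\lambda$ to a map $S\to T$, and the two maps $S\to T$ and $\nu:S'\to T$ then induce a unique factorization through the pushout.

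The main obstacle I anticipate lies in parts~(2) and~(3): one must verify that a pair of compatible $S$- and $S'$-module structures on $M$ actually assembles into a genuine $(S\sqcup_R S')$-module structure, which is the module-theoretic incarnation of the fact that the pushout of rings represents pairs of compatible algebra maps. Once this bookkeeping is done, parts~(3) and~(4) follow by essentially formal manipulations of universal properties.
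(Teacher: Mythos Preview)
The paper does not give its own proof of this lemma; it is quoted verbatim from the cited references \cite{BD,chen-xi,AMSTV}. So there is no in-paper argument to compare against, and the question reduces to whether your sketch is correct.

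Your outline for parts~(1)--(3) is sound and is essentially the standard argument: epimorphisms are stable under pushout in any category; the essential image of $(\mu\lambda')_\ast$ consists of those $R$-modules admitting a compatible $(S\sqcup_R S')$-structure, which by the universal property of the pushout amounts to simultaneous $S$- and $S'$-structures, hence $\mathcal X\cap\mathcal X'$; and pseudoflatness transfers because the bireflective subcategory of $\mu$ in $\mathrm{Mod}\,S'$ is the preimage of $\mathcal X$ under the exact restriction functor $\lambda'_\ast$.

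There is one genuine slip in part~(4). You invoke an identification $S\sqcup_R S'\cong S\otimes_R S'$, but the lemma is stated for an \emph{arbitrary} ring $R$, and that isomorphism (cf.~Lemma~\ref{ttt}) holds only in the commutative setting. Fortunately the argument does not need it: to see that $\mu$ inverts $\sigma\otimes_R S'$, note that $(\sigma\otimes_R S')\otimes_{S'}(S\sqcup_R S')\cong \sigma\otimes_R(S\sqcup_R S')$, and since the $R$-action on $S\sqcup_R S'$ factors as $\mu'\lambda$, this is $(\sigma\otimes_R S)\otimes_S(S\sqcup_R S')$, an isomorphism because $\sigma\otimes_R S$ is. The universal property then follows exactly as you describe, giving $S\sqcup_R S'\cong S'_{\Sigma\otimes_R S'}$. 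With this correction your proof is complete.
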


\begin{lem}\label{ttt}
Let R be a commutative ring, and let $\lambda:R\rightarrow S$
and $\lambda^{'}:R\rightarrow S^{'}$ be  ring epimorphisms.
Then $\text{Coker}\lambda\otimes_RS'\cong \text{Coker}\mu$, where $\mu: S'\to S\otimes_{R}S^{'}$ is given by the push-out of $\lambda$ and $\lambda^{'}$.
\end{lem}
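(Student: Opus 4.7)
The plan is to apply the right exact functor $-\otimes_R S'$ to the defining exact sequence of $\mr{Coker}\lambda$ and then identify the resulting canonical map with $\mu$.

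First, I would recall that since $R$ is commutative and $\lambda,\lambda'$ are ring epimorphisms, $S$ and $S'$ are commutative by Silver's result cited earlier in the paper. Consequently, the pushout $S\sqcup_R S'$ in the category of rings coincides with the tensor product $S\otimes_R S'$, which is already implicit in the statement via the notation $\mu:S'\to S\otimes_R S'$, with $\mu(s')=1\otimes s'$.

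Next, starting from the exact sequence
$$R\xrightarrow{\lambda} S\to \mr{Coker}\lambda\to 0$$
I would apply the right exact functor $-\otimes_R S'$ to obtain the exact sequence
$$R\otimes_R S'\xrightarrow{\lambda\otimes_R S'} S\otimes_R S'\to \mr{Coker}\lambda\otimes_R S'\to 0.$$
Under the canonical isomorphism $R\otimes_R S'\cong S'$ sending $r\otimes s'\mapsto \lambda'(r)s'$, the map $\lambda\otimes_R S'$ sends $s'\mapsto \lambda(1)\otimes s'=1\otimes s'$, which is exactly the pushout morphism $\mu$. Comparing cokernels yields the desired isomorphism $\mr{Coker}\lambda\otimes_R S'\cong \mr{Coker}\mu$.

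There is no genuine obstacle here: the only point worth making explicit is the identification of $\lambda\otimes_R S'$ with $\mu$, which is essentially the universal property of the pushout in the commutative setting. The argument is short and entirely formal, so I would present it in just a few lines.
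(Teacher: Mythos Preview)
Your argument is correct and in fact considerably more direct than the paper's own proof. Both proofs begin by identifying the pushout $S\sqcup_R S'$ with $S\otimes_R S'$ in the commutative setting, but then they diverge. You simply apply the right exact functor $-\otimes_R S'$ to $R\xrightarrow{\lambda}S\to\mr{Coker}\lambda\to 0$ and observe that under the canonical isomorphism $R\otimes_R S'\cong S'$ the map $\lambda\otimes_R S'$ becomes $\mu$, whence $\mr{Coker}\lambda\otimes_R S'\cong\mr{Coker}\mu$. The paper instead applies $-\otimes_R S'$ to the whole pushout square, then argues via the theory of bireflective subcategories that both $\lambda\otimes_R S'$ and $\tilde{\lambda}\otimes_R S'$ are $(\mc{X}\cap\mc{X}')$-reflections of $R\otimes_R S'$, deduces from uniqueness of reflections that $\mu'\otimes_R S'$ is an isomorphism, and only then compares cokernels across a three-row commutative diagram. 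Your approach is more elementary and avoids the reflection machinery entirely; the paper's approach, while heavier, makes the role of the bireflective subcategories explicit, which fits the surrounding narrative but is not actually needed for this particular lemma.
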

\begin{proof}
It is well known (see e.g.~{\rm\cite [Lemma 6.3] {chen-xi}}) that $S\sqcup_{R}S^{'}\cong S\otimes_{R}S^{'}$.
We keep the notation of diagram (\ref{po}), and
set  $\tilde{\lambda}=\mu^{'}\lambda=\mu\lambda^{'}$. We know from Lemma \ref{a1} that $\tilde{\lambda}$ is a ring epimorphism, and its corresponding bireflective subcategory  is $\tilde{\mathcal{X}}=\mathcal{X}\cap \mathcal{X}^{'}$.
Applying the functor  $-\otimes_{R}S^{'}$ to the diagram (\ref{po}), we obtain the following commutative diagram
$$\xymatrix{
  R\otimes_{R}S^{'} \ar@{.>}[dr]|-{\tilde{\lambda}\otimes_{R}S^{'}}\ar[d]_{\lambda^{'}\otimes_{R}S^{'}} \ar[r]^{\lambda\otimes_{R}S^{'}} &S\otimes_{R}S^{'}\ar[d]^{\mu^{'}\otimes_{R}S^{'}} \\
  S^{'}\otimes_{R}S^{'} \ar[r]_{\mu\otimes_{R}S^{'}} & S\otimes_{R}S^{'} \otimes_{R}S^{'}.}$$
 Notice that $\lambda'\otimes_R S'$ is an isomorphism. We claim that $\mu'\otimes_R S'$ is an isomorphism as well. To this end, we will show that both $\lambda\otimes_R S'$ and $\tilde{\lambda}\otimes_R S'$ are {$\tilde{\mc{X}}$}-reflections of $R\otimes_R S'$. Our claim will then follow from  the uniqueness of reflections.

Let us
consider the commutative diagram
$$\xymatrix{
  R\ar[d]_{\psi_{R}} \ar[r]^{\lambda} & S\ar[d]^{\psi_{S}} \\
  R\otimes_{R}S^{'} \ar[r]^{\lambda\otimes_{R}S^{'}} & S\otimes_{R}S^{'}.}$$
Recall that $\lambda$ is an $\mathcal{X}$-reflection of $R$, and $\psi_{R},~\psi_{S}$ are $\mathcal{\mathcal{X}}^{'}$-reflections of $R$ and $S$, respectively. Since
  $S\otimes_{R}S^{'}\in \tilde{\mathcal{X}}$, we conclude  that $\lambda\otimes_R S'$ is an $\tilde{\mathcal{X}}$-reflection.
  We argue similarly for $\tilde{\lambda}\otimes_R S'$, using the commutative diagram below together with the fact that $S\otimes_{R}S^{'}\otimes_{R}S^{'}\simeq S\otimes_{R}S^{'}\in \tilde{\mathcal{X}}$
$$\xymatrix{
  R\ar[d]_{\psi_{R}} \ar[r]^{\tilde{\lambda}} & S\otimes_{R}S^{'}\ar[d]^{\psi_{S\otimes_{R}{S^{'}}}} \\
  R\otimes_{R}S^{'} \ar[r]^{\tilde{\lambda}\otimes_{R}S^{'}} & S\otimes_{R}S^{'}\otimes_{R}S^{'}.}$$
Next, we observe that the  $\mc{X}'$-reflections $\psi_{S^{'}}$ and $\psi_{S\otimes_{R}S^{'}}$ are isomorphisms, because  $S^{'}$ and $S\otimes_{R}S^{'}$ are in $\mathcal{X}^{'}$.
So, we have a commutative diagram with exact rows
$$ \xymatrix{
 &R\otimes_{R}S^{'} \ar[r]^{\lambda\otimes_{R}S^{'}}\ar[d]^{\lambda^{'}\otimes_{R}S^{'}}    & S\otimes_{R}S^{'}\ar[d]^{\mu^{'}\otimes_{R}S^{'}}\ar[r]              &  \text{Coker}\lambda\otimes_{R}S^{'}\ar[d]\ar[r]  & 0  \\
&S^{'} \otimes_{R}S^{'}\ar[r]^{\mu\otimes_{R}S^{'}} \ar[d]^{\psi^{-1}_{S^{'}}}    &S\otimes_{R}S^{'}\otimes_{R}S^{'}\ar[r] \ar[d]^{\psi^{-1}_{S\otimes_{R}S^{'}}}           &  \text{Coker}\mu \otimes_{R}{S^{'}}\ar[r]\ar[d]   &0  \\
&S^{'} \ar[r]^{\mu}   &S\otimes_{R}S^{'}\ar[r]              &  \text{Coker}\mu\ar[r]   &0.
}
$$
where the isomorphisms in the first two columns induce an isomorphism $ \text{Coker}\lambda\otimes_{R}S^{'}\simeq  \text{Coker}\mu$.
\end{proof}

\begin{prop}\label{L1}
Let R be a commutative ring, and let  $T=S\oplus_{R}\mathrm{Coker}\lambda$ be a silting module arising from a ring epimorphism  $\lambda:R\rightarrow S$. Assume $\lambda^{'}:R\rightarrow S^{'}$ is a ring epimorphism such that   $ S\otimes_{R}S^{'}$ has a presilting presentation over $S^{'}$. Then $T\otimes_{R}S^{'}$ is a silting $S^{'}$-module which arises from the ring epimorphism $\mu: S'\to S\otimes_{R}S^{'}$ given by the push-out of $\lambda$ and $\lambda^{'}$.
\end{prop}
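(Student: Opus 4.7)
The plan is to assemble the ingredients that are already in place: Lemma~\ref{a1} gives us the relevant push-out, Lemma~\ref{ttt} identifies the cokernel after tensoring, and the injection in Proposition~\ref{bijpresilting} (i.e.~\cite[Proposition~1.3]{AMV4}) converts a presilting presentation into a silting module arising from a ring epimorphism. So essentially the work is bookkeeping.

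First I would invoke Lemma~\ref{a1} for the push-out of $\lambda$ and $\lambda'$ to conclude that $\mu: S'\to S\otimes_{R}S'$ is a ring epimorphism. Since by hypothesis the right $S'$-module $S\otimes_{R}S'$ admits a presilting presentation, the injection from (i) to (ii) in Proposition~\ref{bijpresilting} applies to $\mu$, yielding that
\[
T'\;:=\;(S\otimes_{R}S')\,\oplus\,\mathrm{Coker}\,\mu
\]
is a silting right $S'$-module arising from the ring epimorphism $\mu$.

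Next I would compute $T\otimes_{R}S'$ explicitly. Since tensor product distributes over direct sums,
\[
T\otimes_{R}S'\;=\;(S\oplus \mathrm{Coker}\,\lambda)\otimes_{R}S'\;\cong\;(S\otimes_{R}S')\,\oplus\,(\mathrm{Coker}\,\lambda\otimes_{R}S'),
\]
and by Lemma~\ref{ttt} the right-hand summand is canonically isomorphic to $\mathrm{Coker}\,\mu$. Hence $T\otimes_{R}S'\cong T'$ as $S'$-modules, and we are done.

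The statement is essentially a direct consequence of the earlier lemmas, so there is no real technical obstacle. The only point that requires a small amount of care is making sure that the decomposition of $T\otimes_{R}S'$ as an $S'$-module matches the silting $S'$-module produced by $\mu$ via Proposition~\ref{bijpresilting}; this is precisely what Lemma~\ref{ttt} guarantees, so the argument is almost purely formal once the correct diagrams are set up.
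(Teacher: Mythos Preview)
Your proof is correct and follows essentially the same route as the paper's: apply Proposition~\ref{bijpresilting} to the ring epimorphism $\mu$ (a ring epimorphism by Lemma~\ref{a1}) to obtain the silting $S'$-module $(S\otimes_R S')\oplus\mathrm{Coker}\,\mu$, and then use Lemma~\ref{ttt} to identify this with $T\otimes_R S'$. The paper's proof is more terse but the logic is identical.
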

\begin{proof}
By Proposition~\ref{bijpresilting}, the ring epimorphism $\mu: S'\to S\otimes_{R}S^{'}$ induces a silting $S'$-module $S\otimes_{R}S^{'}\oplus\mr{Coker}\mu$, which by Lemma~\ref{ttt} is isomorphic to $T\otimes_R S'$.
\end{proof}

\begin{cro}\label{cases}
Let R be a commutative ring, and let  $T=S\oplus_{R}\mathrm{Coker}\lambda$ be a silting module arising from a ring epimorphism  $\lambda:R\rightarrow S$. Let $\lambda^{'}:R\rightarrow S^{'}$ be a  ring epimorphism such that the projective dimension of  $ S\otimes_{R}S^{'}$ over $S^{'}$ is at most one.

(1) If  $\lambda$ is a pseudoflat ring epimorphism, then $T\otimes_{R}S^{'}$ is a minimal silting $S^{'}$-module.

(2) If $\lambda$ is a universal localization, then $T\otimes_{R}S^{'}$  arises  from {a} universal localization.
\end{cro}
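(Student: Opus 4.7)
The plan is to combine Proposition~\ref{L1} with the functorial behaviour of pseudoflat epimorphisms and universal localizations under push-outs recorded in Lemma~\ref{a1}. The common setup for both parts is to form the push-out square of $\lambda$ and $\lambda'$ in the category of commutative rings and work with the induced ring epimorphism $\mu\colon S'\to S\otimes_R S'$. Since $\mr{pd}_{S'}(S\otimes_R S')\le 1$, the module $S\otimes_R S'$ admits in particular a presilting presentation over $S'$, so the hypothesis of Proposition~\ref{L1} is met; hence $T\otimes_R S'$ is a silting $S'$-module that arises from the ring epimorphism $\mu$, and is (up to equivalence) of the form $S\otimes_R S'\oplus\mr{Coker}\mu$.

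For (1), I would observe that Lemma~\ref{a1} propagates pseudoflatness across the push-out: if $\lambda$ is pseudoflat, then so is $\mu$. Combined with the assumption $\mr{pd}_{S'}(S\otimes_R S')\le 1$, Lemma~\ref{pdim} applies to $\mu$ and yields that $S\otimes_R S'\oplus\mr{Coker}\mu$ is a minimal silting $S'$-module. In view of the identification with $T\otimes_R S'$ obtained from Proposition~\ref{L1}, this gives the minimality claim.

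For (2), the same Lemma~\ref{a1} tells us that the push-out of a universal localization is again a universal localization; thus if $\lambda$ is a universal localization, then $\mu$ is too, and the presentation of $T\otimes_R S'$ coming from Proposition~\ref{L1} exhibits it as a silting module arising from the universal localization $\mu$.

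The only genuinely non-formal point is making sure the hypothesis of Proposition~\ref{L1} is actually available, i.e.\ that $\mr{pd}_{S'}(S\otimes_R S')\le 1$ forces $S\otimes_R S'$ to have a presilting presentation over $S'$; this is exactly the ingredient used in the proof of Lemma~\ref{pdim} (based on \cite[Example~4.15]{AH}), so no new work is required. Everything else is a matter of assembling the preceding results.
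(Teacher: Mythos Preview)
Your approach is essentially the paper's: push out along $\lambda'$ to obtain $\mu:S'\to S\otimes_RS'$, use Lemma~\ref{a1} to transport pseudoflatness (resp.\ universal localization) to $\mu$, identify $T\otimes_RS'$ with $S\otimes_RS'\oplus\mr{Coker}\mu$ via Lemma~\ref{ttt}, and then invoke Lemma~\ref{pdim}. The only difference is that you route the identification through Proposition~\ref{L1} rather than citing Lemma~\ref{ttt} directly, which is harmless.

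There is, however, one small logical wrinkle in your exposition. You assert in the ``common setup'' that $\mr{pd}_{S'}(S\otimes_RS')\le 1$ alone forces $S\otimes_RS'$ to have a presilting presentation over $S'$, and you appeal to the ingredient inside Lemma~\ref{pdim}. But that ingredient (from \cite[Example~4.15]{AH}) requires not just projective dimension $\le 1$, but also that $\mu$ be a \emph{pseudoflat} ring epimorphism: one needs $\mr{Ext}^1_{S'}(S\otimes_RS',-)$ and $\mr{Ext}^1_R$ to agree on $S\otimes_RS'$-modules so that $\mr{Gen}(S\otimes_RS')\subseteq (S\otimes_RS')^{\perp_1}$. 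This is not a problem for the corollary, since in both (1) and (2) the map $\lambda$ is pseudoflat (universal localizations are pseudoflat), whence $\mu$ is pseudoflat by Lemma~\ref{a1}. But you should establish pseudoflatness of $\mu$ \emph{before} invoking the presilting presentation, not after; otherwise the appeal to Proposition~\ref{L1} in your common setup is premature. Once reordered, your argument is complete and matches the paper's.
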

\begin{proof}
By Lemma~\ref{a1}, the ring epimorphism $\mu$ is pseudoflat. Statement (1)  then follows by combining Lemma~\ref{ttt} and~\ref{pdim}. Statement (2) follows similarly by observing that $\mu$ is a universal localization.
\end{proof}


Let us now turn to the dual situation. We will see that minimality of cosilting modules is often preserved by extensions along arbitrary ring epimorphisms.

\begin{prop}{\rm \cite[Remark 2.3]{BREAZ}}  Let  $\lambda:R\to S$ be a ring homomorphism, and
 let $C$ be a cosilting module with respect to a injective copresentation $\omega$. Then  $\mr{Hom}_R(S,C) $ is a cosilting $S$-module with respect to $\mr{Hom}_R(S,\omega)$ if and only if   $\mr{Hom}_R(S,C)$, viewed as an $R$-module, lies in $\mr{Cogen}C$.
\end{prop}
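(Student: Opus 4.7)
The plan is to reduce the biconditional to the Hom-adjunction between restriction of scalars and $\mr{Hom}_R(S,-)$, with all real content being a straightforward bookkeeping argument.

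First, I would verify that the statement makes sense on the $S$-side. Since $\mr{Hom}_R(S,-): \mr{Mod}R\to\mr{Mod}S$ is right adjoint to the (exact) restriction-of-scalars functor, it is left exact and preserves injectives. Applying it to $0\to C\to Q_0\xrightarrow{\omega}Q_1$ therefore yields an injective copresentation
$$0\to \mr{Hom}_R(S,C)\to \mr{Hom}_R(S,Q_0)\xrightarrow{\mr{Hom}_R(S,\omega)}\mr{Hom}_R(S,Q_1)$$
of $\mr{Hom}_R(S,C)$ in $\mr{Mod}S$. So the entire content of the claim is whether the candidate class $\mc{B}:=\{X\in\mr{Mod}S\mid \mr{Hom}_S(X,\mr{Hom}_R(S,\omega))\text{ is surjective}\}$ coincides with $\mr{Cogen}_S\mr{Hom}_R(S,C)$.

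Next, I would apply naturality of the Hom-adjunction to the morphism $\omega$. For every $X\in\mr{Mod}S$, viewed as an $R$-module by restriction,
$$\mr{Hom}_S(X,\mr{Hom}_R(S,\omega))\cong \mr{Hom}_R(X,\omega),$$
so $X\in\mc{B}$ if and only if $\mr{Hom}_R(X,\omega)$ is surjective. Because $C$ is cosilting with respect to $\omega$, the latter class is exactly $\mr{Cogen}_RC$. Hence $\mc{B}=\mr{Cogen}_RC\cap\mr{Mod}S$. The only-if direction now becomes a one-liner: if $\mr{Hom}_R(S,C)$ is cosilting over $S$ with respect to $\mr{Hom}_R(S,\omega)$, then it cogenerates itself over $S$, hence belongs to $\mc{B}=\mr{Cogen}_RC\cap\mr{Mod}S$, i.e. it lies in $\mr{Cogen}_RC$ as an $R$-module.

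For the if-direction, assume $\mr{Hom}_R(S,C)\in\mr{Cogen}_RC$ and prove both inclusions of $\mr{Cogen}_S\mr{Hom}_R(S,C)=\mc{B}$. The inclusion $\subseteq$ is formal: any $S$-submodule of $\mr{Hom}_R(S,C)^I$ is also an $R$-submodule of an object in $\mr{Cogen}_RC$ (since this class is closed under products and submodules), hence lies in $\mr{Cogen}_RC$, and the identification above places it in $\mc{B}$. For $\supseteq$, given $Y\in\mc{B}$ the identification yields an $R$-linear embedding $\varphi:Y\hookrightarrow C^I$. Its adjoint $\tilde\varphi:Y\to \mr{Hom}_R(S,C)^I$, $\tilde\varphi(y)(s)=\varphi(sy)$, is $S$-linear, and it is injective because $\tilde\varphi(y)=0$ forces $\varphi(y)=\tilde\varphi(y)(1)=0$ and hence $y=0$. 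This realises $Y$ inside $\mr{Cogen}_S\mr{Hom}_R(S,C)$. I do not anticipate any serious obstacle: the entire argument is adjunction bookkeeping, with the only mild subtlety being the injectivity of the adjoint map $\tilde\varphi$ in the last step.
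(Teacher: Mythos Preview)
Your argument is correct: the adjunction $\mr{Hom}_S(X,\mr{Hom}_R(S,\omega))\cong\mr{Hom}_R(X,\omega)$ identifies $\mc{B}$ with $\mr{Cogen}_RC\cap\mr{Mod}S$, and the two inclusions in the if-direction are handled cleanly, including the injectivity of the adjoint map via evaluation at $1$. The paper itself gives no proof of this proposition---it simply quotes \cite[Remark 2.3]{BREAZ}---so there is nothing to compare against; your write-up supplies precisely the dual of the argument for Theorem~\ref{b} in that reference.
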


\begin{prop}\label{dualL1}
Let $R$ be a commutative ring and $C$ be a minimal cosilting module arising from a  ring epimorphism $\lambda: R\rightarrow S$.   Assume that $\lambda^{'}: R\rightarrow S^{'}$ is a  ring epimorphism such that $(S\otimes_{R}S^{'})^{+}$ has a precosilting copresentation over $S^{'}$. Then $\mathrm{Hom}_{R}(S^{'},C)$ is a minimal cosilting $S^{'}$-module.
\end{prop}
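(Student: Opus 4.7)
The plan is to reduce to Theorem~\ref{mincos} applied over $S'$, by identifying $\mathrm{Hom}_R(S',C)$ as the minimal cosilting $S'$-module associated to the push-out ring epimorphism. Since $C$ is a minimal cosilting $R$-module arising from $\lambda$, Theorem~\ref{mincos} gives $C\sim S^+\oplus \mathrm{Ker}\lambda^+$, and the dualisation of $R\xrightarrow{\lambda}S\to\mathrm{Coker}\lambda\to 0$ via the exact functor $(-)^+$ identifies $\mathrm{Ker}\lambda^+$ with $(\mathrm{Coker}\lambda)^+$.

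Next, I would form the push-out of $\lambda$ and $\lambda'$ as in diagram (\ref{po}) to obtain the ring epimorphism $\mu\colon S'\to S\otimes_R S'$; this is automatic from Lemma~\ref{a1}. The hypothesis that $(S\otimes_R S')^+$ has a precosilting copresentation over $S'$ is then precisely what Theorem~\ref{mincos}, read over $S'$, needs in order to assert that $\mu$ determines a minimal cosilting $S'$-module, namely $(S\otimes_R S')^+\oplus \mathrm{Ker}\mu^+$.

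The final step is to check that this is, up to equivalence, exactly $\mathrm{Hom}_R(S',C)$. Since $(-)^+$ commutes with $\mathrm{Hom}_R(S',-)$ via tensor-hom adjunction, one has natural $S'$-linear isomorphisms
\[
\mathrm{Hom}_R(S', S^+)\;\cong\;(S\otimes_R S')^+,\qquad \mathrm{Hom}_R\bigl(S',(\mathrm{Coker}\lambda)^+\bigr)\;\cong\;(\mathrm{Coker}\lambda\otimes_R S')^+,
\]
and Lemma~\ref{ttt} identifies the latter with $(\mathrm{Coker}\mu)^+=\mathrm{Ker}\mu^+$. Assembling the two summands of $C$, one obtains $\mathrm{Hom}_R(S',C)\cong (S\otimes_R S')^+\oplus \mathrm{Ker}\mu^+$ as $S'$-modules, which is the desired minimal cosilting $S'$-module.

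The main obstacle is really only a bookkeeping one: verifying that the adjunction isomorphisms are $S'$-linear (so the module one produces genuinely matches the one coming from $\mu$) and that the application of Lemma~\ref{ttt} is legitimate (the lemma concerns two ring epimorphisms from a common commutative base, which is precisely our set-up). Once this is in place, Theorem~\ref{mincos} does all the work, so no additional computation with copresentations is required.
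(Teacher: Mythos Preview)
Your argument is correct and follows essentially the same route as the paper: form the push-out $\mu\colon S'\to S\otimes_R S'$ via Lemma~\ref{a1}, apply Theorem~\ref{mincos} over $S'$ to obtain the minimal cosilting module $(S\otimes_R S')^+\oplus\mathrm{Ker}\mu^+$, and then identify this with $\mathrm{Hom}_R(S',C)$ by tensor--hom adjunction together with Lemma~\ref{ttt}. The only cosmetic difference is the order of exposition; the content is the same.
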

\begin{proof}
The push-out of $\lambda$ and $\lambda'$ in (\ref{po}) gives rise to a ring epimorphism $\mu:S'\to S\otimes_{R}S^{'}$ as in condition (i) of Theorem~\ref{mincos}. Hence
 $(S\otimes_{R}S^{'})^{+}\oplus \text{Ker}\mu^{+}$ is a minimal cosilting $S^{'}$-module.
On the other hand, we infer from the shape of
 $C=S^{+}\oplus\mathrm{Ker}\lambda^{+}$ that
$\text{Hom}_{R}(S^{'},C)= \text{Hom}_{R}(S^{'},S^{+})\oplus \text{Hom}_{R}(S^{'},\text{Ker}\lambda^{+})\simeq \text{Hom}_{R}(S^{'},S^{+})\oplus\text{Hom}_{R}(S^{'},(\text{Coker}\lambda)^{+})
\simeq (S\otimes_{R}S^{'})^{+}\oplus(\text{Coker}\lambda \otimes_{R}S^{'})^{+}$.
Now recall from Lemma~\ref{ttt} that $\mathrm{Coker}\lambda \otimes_{R}S^{'} \simeq \text{Coker}\mu$. We deduce that $\mathrm{Hom}_{R}(S^{'},C)\simeq(S\otimes_{R}S^{'})^{+}\oplus  \text{Ker}\mu^{+}$, which concludes the proof.
\end{proof}

\begin{cro}\label{cor1}
Let  $R$ be a commutative noetherian ring, or a commutative  ring of weak global dimension at most one. Then all minimal cosilting modules extend to minimal cosilting modules along any pseudoflat ring epimorphism.
\end{cro}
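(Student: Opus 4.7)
The plan is to reduce to Proposition~\ref{dualL1} by verifying its hypothesis in each of the two cases. Given a minimal cosilting $R$-module $C$, Theorem~\ref{mincos} provides a ring epimorphism $\lambda:R\to S$ from which $C$ arises, and Remark~\ref{pf} then gives that $\lambda$ is pseudoflat (and flat if $R$ is noetherian). For a pseudoflat $\lambda':R\to S'$, I will form the push-out square~(\ref{po}) and use Lemma~\ref{a1} to see that the induced ring epimorphism $\mu:S'\to S\otimes_R S'$ is pseudoflat. To apply Proposition~\ref{dualL1} I must show that $(S\otimes_R S')^+$ has a precosilting copresentation over $S'$; by Theorem~\ref{mincos} applied with base ring $S'$ and ring epimorphism $\mu$, together with Remark~\ref{pf}, this reduces to verifying that $S\otimes_R S'$ has weak dimension at most one as an $S'$-module.

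In case (1), where $R$ is commutative noetherian, $\lambda$ is flat, so its base change $\mu$ is flat and $S\otimes_R S'$ is a flat $S'$-module; the required bound is immediate.

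In case (2), where $R$ has weak global dimension at most one, I will first upgrade the pseudoflatness of $\lambda'$ to homologicality: combining $\mathrm{Tor}_1^R(S',S')=0$ with $\mathrm{Tor}_i^R(S',S')=0$ for $i\ge 2$ (which follows from the bound on weak global dimension) yields $\mathrm{Tor}_i^R(S',S')=0$ for all $i\ge 1$, so $\lambda'$ is homological. The resulting fully faithful embedding of derived categories gives $\mathrm{Tor}_i^{S'}(M,N)\cong\mathrm{Tor}_i^R(M,N)$ for all $S'$-modules $M,N$ and all $i\ge 0$. Applied with $i=2$, this shows that the weak global dimension of $S'$ is at most one, so in particular $S\otimes_R S'$ has weak dimension at most one over $S'$, completing the verification.

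The main obstacle is the weak dimension step in case (2). The key observation that unlocks it is that over a base ring of weak global dimension at most one, every pseudoflat ring epimorphism is automatically homological, and this is precisely what licenses the transport of Tor-groups from $R$ to $S'$.
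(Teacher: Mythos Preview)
Your proposal is correct and follows essentially the same route as the paper: in both cases you reduce to Proposition~\ref{dualL1} by verifying, via Remark~\ref{pf}, that the pushed-out epimorphism $\mu:S'\to S\otimes_R S'$ from Lemma~\ref{a1} satisfies the precosilting-copresentation condition, and the key step in case~(2) is indeed the upgrade of pseudoflatness to homologicality (the paper cites \cite[Theorem~4.4]{Geigle} for the Tor-transfer). A minor streamlining in your argument for case~(1) is that you deduce flatness of $\mu$ directly by base change of the flat map $\lambda$, whereas the paper first checks that $S'$ is again commutative noetherian (via \cite{Silver}) and then invokes pseudoflat $=$ flat over such rings; your shortcut is valid and avoids that detour.
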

\begin{proof}
Assume that $R$ is commutative noetherian. Recall from \cite[Proposition 4.5]{AMSTV} that every  pseudoflat ring epimorphism $R\to S$ is flat. By  Remark~\ref{pf},
every minimal cosilting module $C$ arises from a flat ring epimorphism $\lambda:R \rightarrow S$. If $\lambda':R\rightarrow S'$ is a flat ring epimorphism, then $S'$ is again commutative and noetherian by  \cite[Corollary~1.2 and Proposition 1.6]{Silver}. We infer from  Lemma~\ref{a1} that the push-out of $\lambda$ and $\lambda'$ in (\ref{po}) gives rise to a flat ring epimorphism $\mu:S'\to S\otimes_{R}S^{'}$.  Then $(S\otimes_{R}S^{'})^{+}$ has a precosilting copresentation over $S^{'}$ by Remark~\ref{pf}, and the claim follows from Proposition~\ref{dualL1}.

Now assume that $R$ is a commutative  ring of weak global dimension at most one. Then every pseudoflat ring epimorphism  $R\to S$ is homological, and $S$ has  weak global dimension at most one, because the functors  $\mathrm{Tor}_{i}^{R}$ and $\mathrm{Tor}_{i}^{S}$ agree on $S$-modules for all $i\ge 1$ by \cite[Theorem 4.4]{Geigle}.
So, we can proceed as above.
By  Remark~\ref{pf},
every minimal cosilting module $C$ arises from a homological ring epimorphism $\lambda:R \rightarrow S$. The push-out of $\lambda$ with a homological ring epimorphism  $\lambda':R\rightarrow S'$ yields a homological ring epimorphism $\mu:S'\to S\otimes_{R}S^{'}$.   Then $(S\otimes_{R}S^{'})^{+}$ has a precosilting copresentation over $S^{'}$, and the claim follows from Proposition~\ref{dualL1}.
\end{proof}

Recall that a ring $R$ is said to be {\it semihereditary} if every finitely generated right ideal is projective. Moreover, a cosilting module  is said to be {\it of cofinite type} if it is equivalent to the dual $T^+$ of a silting module $T$.

\begin{cro}\label{cor2}
Over a commutative ring, every minimal cosilting module arising from  a universal localization extends to a minimal cosilting module  along any ring epimorphism.

In particular, over a commutative (semi)hereditary ring, every  cosilting module (of cofinite type) extends to a minimal cosilting module  along any ring epimorphism.
\end{cro}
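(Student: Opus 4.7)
The plan is to reduce both assertions to Proposition~\ref{dualL1}. For the main statement, given a minimal cosilting $R$-module $C$ arising from a universal localization $\lambda:R\to S$ and any ring epimorphism $\lambda':R\to S'$, the first move is to form the push-out~(\ref{po}) to obtain $\mu:S'\to S\otimes_{R}S'$; by Lemma~\ref{a1}, $\mu$ is itself a universal localization, namely at the set $\{\sigma\otimes_{R}S'\mid\sigma\in\Sigma\}$ of morphisms between finitely generated projective $S'$-modules, where $\Sigma$ is the set inverted by $\lambda$. It then remains to verify the hypothesis of Proposition~\ref{dualL1}: that $(S\otimes_{R}S')^{+}$ admits a precosilting copresentation over $S'$.

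The key technical step is therefore the following general claim: any universal localization $\nu:A\to A_{\Sigma}$ at a set of morphisms between finitely generated projective $A$-modules induces a precosilting copresentation of $A_{\Sigma}^{+}$ over $A$. The plan is to exhibit a natural presilting presentation of $A_{\Sigma}$ coming from the universal localization data (built from the complex $\bar{\sigma}=\bigoplus_{\sigma\in\Sigma}\sigma$, which satisfies $\mathrm{Hom}_{D(A)}(\bar{\sigma},\bar{\sigma}^{(I)}[1])=0$ for all sets $I$ by the universal property), and then to apply the duality $(-)^{+}$ to convert it into a precosilting copresentation $0\to A_{\Sigma}^{+}\to Q^{+}\xrightarrow{\bar{\sigma}^{+}}P^{+}$ of $A_{\Sigma}^{+}$. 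Since $(-)^{+}$ is exact, reverses arrows, and exchanges direct sums with direct products, the presilting vanishing translates into the precosilting vanishing $\mathrm{Hom}_{D(A)}(\bar{\sigma}^{+I},\bar{\sigma}^{+}[1])=0$ by standard tensor--hom adjunction. Once this is in place, Proposition~\ref{dualL1} yields that $\mathrm{Hom}_{R}(S',C)$ is a minimal cosilting $S'$-module, proving the first assertion.

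For the ``in particular'' clause, the strategy is to show that every cosilting module in the stated class is equivalent to one arising from a universal localization, so that the first assertion applies. Over a commutative hereditary ring $R$, every cosilting module is of cofinite type and hence equivalent to $T^{+}$ for some silting $T$; by Remark~\ref{hermin} this $T$ is minimal, and by Theorem~\ref{00} the corresponding ring epimorphism is homological and, in the hereditary case, a universal localization by \cite[Theorem~6.1]{KRAUSE}. Over a commutative semihereditary ring, the same argument applies to cosilting modules of cofinite type, since weak global dimension at most one forces every pseudoflat ring epimorphism to be homological (compare the reasoning in Corollary~\ref{cor1}) and such epimorphisms can be identified with universal localizations in this setting.

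The main obstacle is the construction of the precosilting copresentation for a universal localization: producing a canonical presilting presentation of $A_{\Sigma}$ and showing cleanly that its $(-)^{+}$-dual satisfies the precosilting vanishing requires careful bookkeeping with the shift functor and the $\bigoplus$--$\prod$ exchange, together with the subtle interplay between $\mathrm{Mod}\,A$ and $A\,\mathrm{Mod}$. A secondary delicate point is the claim that every cosilting module over a commutative hereditary ring is of cofinite type, which may call for a separate classification-style argument.
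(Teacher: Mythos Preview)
Your overall strategy of reducing to Proposition~\ref{dualL1} via the push-out is correct and matches the paper, but the ``key technical step'' you propose has a genuine gap.

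The claim that any universal localization $\nu:A\to A_{\Sigma}$ yields a presilting presentation of $A_{\Sigma}$ built from $\bar{\sigma}=\bigoplus_{\sigma\in\Sigma}\sigma$ does not work. First, $\bar{\sigma}$ is not a projective presentation of $A_{\Sigma}$: its cokernel is $\bigoplus_{\sigma\in\Sigma}\mathrm{Coker}\,\sigma$, which has no reason to coincide with $A_{\Sigma}$. Second, the vanishing $\mathrm{Hom}_{D(A)}(\bar{\sigma},\bar{\sigma}^{(I)}[1])=0$ is \emph{not} a consequence of the universal property of universal localization. For instance, take $A=k[x,y]$ and $\Sigma=\{\sigma_{x},\sigma_{y}\}$ with $\sigma_{x}:A\xrightarrow{x}A$, $\sigma_{y}:A\xrightarrow{y}A$. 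Since these maps are injective, the two-term complexes are quasi-isomorphic to $A/(x)$ and $A/(y)$, and $\mathrm{Hom}_{D(A)}(\sigma_{x},\sigma_{y}[1])\cong\mathrm{Ext}^{1}_{A}(A/(x),A/(y))\cong A/(x,y)=k\neq 0$. So your proposed construction fails already in this simple commutative example, and the dualization step never gets off the ground.

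What you are missing is the decisive input specific to the commutative setting: by \cite[Corollary~4.4]{AMSTV}, every universal localization of a commutative ring is a \emph{flat} ring epimorphism. Applied to the push-out map $\mu:S'\to S\otimes_{R}S'$ (which is a universal localization of the commutative ring $S'$ by Lemma~\ref{a1}), this gives flatness of $S\otimes_{R}S'$ over $S'$, and then Remark~\ref{pf} immediately furnishes the required precosilting copresentation of $(S\otimes_{R}S')^{+}$. No explicit construction from $\Sigma$ is needed. For the ``in particular'' clause, the paper simply invokes \cite[paragraph after Proposition~6.5]{AH} to see that over a commutative semihereditary ring every cosilting module of cofinite type arises from a universal localization, and \cite[Theorem~3.11]{AH} to see that over a commutative hereditary ring every cosilting module is of cofinite type; your route through Remark~\ref{hermin} and Theorem~\ref{00} is workable for the hereditary case but more circuitous.
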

\begin{proof}
Let $C$ be a minimal cosilting module arising from a universal localization $\lambda:R\to S$, and let $\lambda':R\to S'$ be an arbitrary ring epimorphism. We know from  Lemma~\ref{a1} that the push-out of $\lambda$ and $\lambda'$ in (\ref{po}) gives rise to a universal localization $\mu:S'\to S\otimes_{R}S^{'}$. Then $\mu$ is a flat ring epimorphism by \cite[Corollary 4.4]{AMSTV}. By Remark~\ref{pf}, this implies that $(S\otimes_{R}S^{'})^{+}$ has a precosilting copresentation over $S^{'}$, and the claim follows from Proposition~\ref{dualL1}.

Now assume that $R$ is semihereditary. As observed in \cite[paragraph after Proposition 6.5]{AH}, every cosilting module of cofinite type is a minimal cosilting module arising from  a universal localization. The claim then follows from the first statement. Finally, if $R$ is hereditary, all cosilting modules are of cofinite type by \cite[Theorem 3.11]{AH}. \end{proof}





\noindent {\small {{\bf Acknowledgements.}} {\small~ Lidia Angeleri H\"{u}gel  acknowledges partial support by Fondazione Cariverona, program ``Ricerca Scientifica di Eccellenza 2018",
project ``Reducing complexity in algebra, logic, combinatorics - REDCOM". Weiqing Cao acknowledges   support from China Scholarship Council (Grant No. 201906860022).}

\end{document}